\newcommand{\assign}{:=}
\newcommand{\dR}{\mathbb{R}}  
\newcommand{\ve}{\varepsilon}     
\newcommand{\ds}{\displaystyle}
\theoremstyle{plain}
\newtheorem{theorem}{Theorem}
\newtheorem{fact}{Fact}
\newtheorem{lemma}{Lemma}
\newtheorem{proposition}{Proposition}
\newtheorem{remark}{Remark}
\newtheorem{definition}{Definition}
\theoremstyle{definition}
\title{A Generalized Multivariable Newton Method}
\author{Regina S. Burachik \and Bethany I. Caldwell \and C. Yal\c{c}{\i}n Kaya}
\begin{document}
\maketitle

\begin{abstract} \noindent
It is well known that the Newton method may not converge when the initial guess does not belong to a specific quadratic convergence region. We propose a family of new variants of the Newton method with the potential advantage of having a larger convergence region as well as more desirable properties near a solution. We prove quadratic convergence of the new family, and provide specific bounds for the asymptotic error constant. We illustrate the advantages of the new methods by means of test problems, including two and six variable polynomial systems, as  well as a challenging signal processing example.  We present a numerical experimental methodology which uses a large number of randomized initial guesses for a number of methods from the new family, in turn providing advice as to which of the methods employed is preferable to use in a particular search domain. 
\end{abstract}

\noindent{\bf Key words}\/: {\sf Fixed-point theory; Fixed-point algorithms; Newton's method; Nonlinear systems of equations; Polynomial equations.}

\noindent{\bf Mathematical Subject Classification: 49M15; 65H04; 65H10}

\pagestyle{myheadings}
\markboth{}{\sf\scriptsize A Generalized Multivariable Newton Method \ \ by R. S. Burachik, B. I. Caldwell and C. Y. Kaya}

\section{Introduction}

Newton's method and its variants are a fundamental tool for solving nonlinear equations. Namely, given a  ${\cal C}^1$-function $f:\dR^n\to \dR^n$, Newton's method is designed to converge iteratively to a solution of the problem 
\begin{equation}\label{NLE-pro}
    f(x)={\bf 0}\,,\quad x\in \dR^n.
\end{equation}
Problem \eqref{NLE-pro} arises in practically every pure and applied discipline, including mathematical programming, engineering, physics, health sciences and economics. As a result, studies of Newton's method form an extremely active area of research, with new variants being constantly developed and tested. Basic results on Newton’s method and comprehensive lists of references can be found, e.g.,  in the books by Dennis and Schnabel \cite{DenSch1983}, Ostrowski \cite{Ostrowski},  Ortega and Rheinboldt \cite{Ortega} and Deuflhard \cite{Deulfhard2011}. The interested reader will find an excellent survey of Newton's method in \cite{Polyak2007}. 

When started at an initial guess close to a solution, Newton's method is well-defined and converges quadratically to a solution of \eqref{NLE-pro}, unless the Jacobian of $f$ is singular and the second partial derivatives of $f$ are not bounded.  Hence, if the user has an idea of where a solution might be lying, Newton's method is well-known to be the fastest and most effective method for solving \eqref{NLE-pro}.  

To ensure {\em global convergence} (i.e., to ensure convergence to a solution from any initial point),  suitable modifications of the Newton method are needed. An example of a globally convergent variant is the so-called {\em Levenberg-Marquardt method} \cite{Levenberg,Marquardt}.  This involves a modification of Newton's search direction at each step of the method.  Without this modification, however, quadratic convergence can only be ensured when the initial guess belongs to a {\em quadratic convergence region}, namely a region from which every starting point generates a quadratically convergent Newton sequence.  

For a given solution $x^*$ of \eqref{NLE-pro}, a quadratic convergence region is, in general, not known {\em a priori}. Kantorovich \cite{Kantorovich1948} and Smale \cite{Smale1986} establish quadratic convergence to a solution of \eqref{NLE-pro} under suitable assumptions on the initial guess, and they do so without modifying Newton's original search direction.  Our aim, on the other hand, is to devise a suitable modification of the Newton iteration, so that the quadratic convergence region is (ideally) larger than the one resulting from the classical Newton iterations. Our approach can, in some sense, be seen as a {\em preconditioning} of the iterations, so that a larger convergence region is obtained. This preconditioning might be helpful when very little is known about the location of the solutions of \eqref{NLE-pro}.

In~\cite{BuraKaya2012}, the authors presented a generalized version of the classical univariate Newton iteration in which the original problem \eqref{NLE-pro} is replaced by a ``modified" system (for $n=1$)
\begin{equation}\label{NLE-pro1}
    f\circ s^{-1}(x)=0\,,
\end{equation}
where $s:\dR\to \dR$ is a ${\cal C}^1$-invertible function in a neighbourhood of the solution. The classical Newton method then corresponds to the choice $s(x)=x$. By judiciously choosing $s$ in a way that relates to the nature of Problem \eqref{NLE-pro1}, the authors illustrate in \cite{BuraKaya2012} via numerical experiments that the region of quadratic convergence can be enlarged, and hence a wider choice of initial guesses are likely to result in quadratic convergence.

In the present paper, we propose a multivariate version of the generalized Newton method proposed in \cite{BuraKaya2012}. We establish the quadratic convergence under suitable assumptions, and test this new method in our numerical experiments. For suitable choices of $s$, we illustrate via extensive numerical experiments that the region of convergence corresponding to the new method may be larger than the one observed for the classical Newton iteration.  

Recall that, if a sequence $(x^k)$ converges to $x^*$ (with $x^k\neq x^*,\,\forall k$), it is said to converge quadratically to $x^*$   whenever we have that
\[
\lambda:=\lim_{k\to\infty}\dfrac{\|x^{k+1}-x^*\|}{\|x^{k}-x^*\|^2}<\infty\,,
\]
where $\lambda$ denotes the so-called {\em asymptotic error constant} (see Definition \ref{def:convSpeed}). Moreover, the smaller $\lambda$ is, the faster the convergence will be. 

With different choices of $s$, the value of $\lambda$ will also be different, in general. Our generalized Newton methods provide a tool for enlarging the region where $\lambda=\lambda(s)$ is finite. Moreover, a suitable choice of $s$ might produce a smaller value $\lambda(s)$, thus resulting in an improvement of the convergence speed.  We illustrate this phenomenon in Section \ref{ss:AEC}. 

The choice of a suitable function $s$ is, however, not clear in general, and more studies are needed to develop a systematic way of designing such choices. An appropriate choice of $s$ may result in a more robust behaviour of the generalized Newton method over a larger search domain, as can be observed in the numerical experiments we carry out in Section \ref{sec:NE}.

The present paper is organized as follows. In Section \ref{sec:prel}, we state the basic definitions, useful remarks, and properties. In Section \ref{sec:conv}, we establish the quadratic convergence results for the generalized Newton's method.  In Section~\ref{ss:AEC}, we establish bounds on the asymptotic error constant. In Section \ref{sec:NE}, we test and compare the classical and a number of generalized methods for example problems with two and six variables, one of the problems being a challenging signal processing example. In the last section, the conclusion and a discussion are presented.

\section{Preliminaries}
\label{sec:prel}

We present first the main definitions and assumptions. 

\begin{definition}\label{def:GradJacHess} \rm
Let $x\in\mathbb{R}^n$ and let $g:\mathbb{R}^n\rightarrow\mathbb{R}^n$ be twice continuously differentiable. We write 
\[
g(x)=\left[\begin{array}{c}
     g_1(x)  \\
     \vdots\\
     g_n(x)
\end{array}\right]\in \dR^n \hbox{ and, for each }i=1,\ldots,n,\hbox{ we have } \nabla g_i(x) := \left[\begin{array}{c}
    \ds\frac{\partial g_i}{\partial x_1}(x)  \\
     \vdots\\
    \ds\frac{\partial g_i}{\partial x_n}(x) 
\end{array}\right]\in \dR^n,
\]
where the vector $\nabla g_i(x)$ is called the {\em gradient of $g_i$ at $x$} for every $i=1,\ldots,n$. The {\em Jacobian of $g$ at $x$}, denoted by $J_g(x)$, is the $n\times n$ matrix which
has for row $i$ the (transpose of the) gradient of each $g_i$, for $i=1,\ldots,n$. More precisely, $J_g(x)$ is defined as
 \[
 J_g(x):=
 \left[\begin{array}{cccc}
 \ds\frac{\partial g_1}{\partial x_1}(x) & \ds\frac{\partial g_1}{\partial x_2}(x) & \cdots & \ds\frac{\partial g_1}{\partial x_n}(x) \\ \vdots & \vdots &  & \vdots \\ \ds\frac{\partial g_n}{\partial x_1}(x) & \ds\frac{\partial g_n}{\partial x_2}(x) & \cdots & \ds\frac{\partial g_n}{\partial x_n}(x) \end{array}\right]\in\mathbb{R}^{n\times n}.\]
 
For each $i=1,\ldots,n$, the {\em Hessian of $g_i$ at $x$} is denoted by $\nabla^2 g_i(x)$ and defined as
 
 \[\nabla^2 g_i(x):=\begin{bmatrix} \ds\frac{\partial^2 g_i}{\partial x_1^2}(x) & \ds\frac{\partial^2 g_i}{\partial x_1\partial x_2}(x) & \cdots & \ds\frac{\partial^2 g_i}{\partial x_1\partial x_n}(x) \\ \vdots & \vdots &  & \vdots \\ \ds\frac{\partial^2 g_i}{\partial x_n\partial x_1}(x) & \ds\frac{\partial^2 g_i}{\partial x_n\partial x_2}(x) & \cdots & \ds\frac{\partial^2 g_i}{\partial x_n^2}(x) \end{bmatrix}\in\mathbb{R}^{n\times n},\]
 for $i=1,2,\dots,n$.
 \end{definition}
 
 In what follows, we denote by $\|\cdot\|$ the Euclidean norm, i.e., the $\ell_2$-norm in $\dR^n$. Denote by $B(x,r):=\{z\in \dR^n\::\: \|z-x\|< r\}$ the open ball centered at $x$ with radius $r$. Similarly, denote  by $B[x,r]:=\{z\in \dR^n\::\: \|z-x\|\le  r\}$ the closed ball centered at $x$ with radius $r$. Given a matrix $A\in \dR^{n\times n}$, recall that a {\em norm of $A$}, denoted as $\|A\|$, can be given by $\|A\|:= \max\{\|Ax\|\::\: \|x\|\le 1\}$, often referred to as the $\ell_2$-norm of $A$.
 
 The following simple lemma will be used in the proof of Proposition \ref{prop:multivarLambdaProof}.

\begin{lemma}\label{lem:Tg}
Let $A_1,\ldots,A_n\in \dR^{n\times n}$ and fix $u\in \dR^n$.
Consider the vectors
\[
v:=\left[
\begin{array}{c}
   u^T\,A_1\,u    \\
    \vdots \\
     u^T\,A_n\, u  
\end{array}
\right]\in \dR^n,\hbox{ and } w:=\left[
\begin{array}{c}
   \|A_1\|    \\
    \vdots \\
     \|A_n\|  
\end{array}
\right] \in \dR^n,
\]
where $\|A_i\|$ is the $\ell_2$-norm of the matrix $A_i$ for $i=1,\ldots,n$.
Then $\|v\|\le \|u\|^2 \|w\|$. 

\end{lemma}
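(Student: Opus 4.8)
The plan is to bound $v$ componentwise and then assemble the pieces through the Euclidean norm. Writing $v_i = u^T A_i\,u = \la u, A_i u\ra$ for $i=1,\ldots,n$, the first step is to estimate each scalar entry $v_i$. By the Cauchy--Schwarz inequality one has $|\la u, A_i u\ra| \le \|u\|\,\|A_i u\|$, and by the very definition of the operator norm $\|A_i u\| \le \|A_i\|\,\|u\|$. Combining these two facts yields the key estimate
\[
|v_i| = |u^T A_i\,u| \le \|u\|^2\,\|A_i\| = \|u\|^2\, w_i,
\]
which holds for every $i=1,\ldots,n$.

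With this componentwise bound in hand, the second step is routine: squaring and summing over $i$ gives
\[
\|v\|^2 = \sum_{i=1}^n v_i^{\,2} \le \|u\|^4 \sum_{i=1}^n w_i^{\,2} = \|u\|^4\,\|w\|^2 ,
\]
and taking square roots delivers $\|v\| \le \|u\|^2\,\|w\|$, as claimed. Here I use implicitly that every $w_i = \|A_i\| \ge 0$, so that $|v_i| \le \|u\|^2 w_i$ passes correctly to the squared inequality.

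There is no genuine obstacle in this argument; the only point that deserves a moment's care is that the matrices $A_i$ are \emph{not} assumed symmetric, so the quadratic-form bound $|u^T A_i u| \le \|u\|^2\|A_i\|$ cannot be argued through eigenvalues of $A_i$. The Cauchy--Schwarz-plus-operator-norm route used above avoids this issue entirely, since it appeals only to the defining property $\|Az\| \le \|A\|\,\|z\|$ of the induced $\ell_2$-norm and never to any spectral structure of the $A_i$. The degenerate case $u = \mathbf{0}$ is covered automatically, as both sides then vanish.
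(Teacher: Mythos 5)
Your proof is correct and follows essentially the same route as the paper's: Cauchy--Schwarz combined with the definition of the induced $\ell_2$-norm to get $|u^T A_i u|\le \|u\|^2\|A_i\|$, then squaring and summing. Your explicit handling of the absolute value is a small but welcome refinement, since the paper states the componentwise bound without it even though the squaring step implicitly requires it.
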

\begin{proof}
By Cauchy--Schwartz and the definition of the norm,  $u^T\,A_i\,u \le \|u\|^2 \|A_i\|$. Hence,
\[
\begin{array}{rcl}
\|v\|^2&=& \sum_{i=1}^{n} (u^T\,A_i\,u)^2\le  \sum_{i=1}^{n} (\|u\|^2\|A_i\|)^2\\
&&\\
&= & \|u\|^4 \sum_{i=1}^{n}\|A_i\|^2 =\|u\|^4 \|w\|^2,
\end{array}
\]
which yields the statement.
\end{proof}

We recall next some standard definitions and notation we will use in our analysis.

\begin{definition}\label{def:homeo} \rm
Let $A\subset \dR^n$ and $B\subset \dR^m$. Let $h:A\to B$ and fix $D\subset A$.
\begin{itemize}
\item[(a)] We say that $h\in {\cal C}^0(D)$ if $h$ is continuous at every $x\in D$.
      
\item[(b)] We write  $h\in {\cal C}^1(D)$ if $h$ is continuously differentiable at every $x\in D$. Equivalently, $J_h(\cdot):D\to \dR^{m\times n}$ is a continuous function of $x$, for every $x\in D$.

\item[(c)] Assume that $m=n$ and  $h:A\rightarrow B$ is bijective. So there exists $h^{-1}:B\to A$ with $B=h(A)$. If $h\in {\cal C}^0(A)$ and $h^{-1}\in {\cal C}^0(B)$, we say that $h$ is a {\em homeomorphism from $A$ to $B$}. Moreover, if $h\in {\cal C}^1(A)$ and $h^{-1}\in {\cal C}^1(B)$, we say that $h$ is a ${\cal C}^1$-{\em homeomorphism from $A$ to $B$}.

\item[(d)] Fix $\beta>0$. We say that $h\in{\rm Lip}_{\beta}(D)$ if we have
\[
\| h(x)-h(x')\|\le \beta \|x-x'\|,
\]
for every $x,x'\in D$.
\end{itemize}
 \end{definition}
 
\begin{remark}\label{rem:homeo}\rm
 Let $D\subset \dR^n$ be an open set and let $s:\mathbb{R}^n\rightarrow\mathbb{R}^n$ be a ${\cal C}^1$-homeomorphism from $D$ to $s(D)$. Then for every $x\in D$, we have that $J_s(x)\in \mathbb{R}^{n\times n}$ is invertible  and 
 \[
 [J_s(x)]^{-1}=J_{s^{-1}}(s(x)).
 \]
Indeed, given $x\in D$, there is a unique $y\in s(D)$ such that $y=s(x)$. Differentiate both sides of the equality $s\circ s^{-1}(y)=y$ to derive
 \begin{equation}\label{eq:inv}
I=J_s(s^{-1}(y))J_{s^{-1}}(y)= J_s(x)J_{s^{-1}}(s(x)),
\end{equation}
 where we used the Chain Rule. Now \eqref{eq:inv} directly yields the claim.
 \end{remark}

\begin{remark}\label{rem:Jacobian}\rm
Let $D\subset \dR^n$ be an open set and consider two functions $s,f:\mathbb{R}^n\rightarrow\mathbb{R}^n$ such that $s$ is a ${\cal C}^1$-homeomorphism from $D$ to $s(D)$, and that $f\in {\cal C}^1(D)$. Define $F:=f\circ s^{-1}:s(D)\rightarrow f(D)$. By the Chain Rule and Remark~\ref{rem:homeo}, we have, for every $x\in D$,
\begin{equation}\label{eq:inv2}
   J_F(s(x))=J_f(x)J_{s^{-1}}(s(x))=J_f(x)[J_s(x)]^{-1}.
\end{equation}
In particular, if $J_f(x)$ is invertible, we obtain
\[
    [J_F(s(x))]^{-1}=J_s(x)[J_f(x)]^{-1},
\]
for every $x\in D$. 
 \end{remark}

Let $g:\dR^n\to \dR^n$, so $g(x):=(g_1(x),\ldots, g_n(x))^T$. For each $i=1,\ldots,n$, the gradient and Hessian of $g_i$ collect the first- and second-order information, respectively, of $g_i$. The Jacobian, on the other hand, collects in a single operator all the first-order information for all coordinates of $g$. Similarly, we will need an operator that encapsulates all second order information for all coordinates of $g$. We formally introduce these operators next. 
 
\begin{definition}\label{def:complicatedFunctions} \rm
Let $g:\mathbb{R}^n\rightarrow\mathbb{R}^n$ be twice continuously differentiable. With the notation of Definition \ref{def:GradJacHess}, define the function $T_g:(\mathbb{R}^n)^n\rightarrow\mathbb{R}^{n\times(n\times n)}$ as
\[
T_g({z^1},\dots,{z^n}):=\begin{bmatrix} \nabla^2 g_1({z}^1) \\ \vdots \\ \nabla^2 g_n({z}^n) \end{bmatrix},
\]
where $z^j\in \mathbb{R}^n$ for $j=1,\ldots,n$. Given $n$ vectors $z^1,\dots,z^n\in\mathbb{R}^n$, define the map $T_g({z}^1,\dots,{z}^n): (\mathbb{R}^n)^n \to \dR^n$ as
\[
T_g({z}^1,\dots,{z}^n)_{(u^1,\dots,u^n)} :=
\begin{bmatrix} {(u^1)}^T \nabla^2 g_1({z}^1)u^1 \\
\vdots \\
{(u^n)}^T \nabla^2 g_n({z}^n)u^n
\end{bmatrix} \in\mathbb{R}^{n},
\]
where $u^j\in \mathbb{R}^n$ for $j=1,\ldots,n$. Finally, define the following norm-like concept for the map   $T_g({z}^1,\dots,{z}^n)$:
\[
\|T_g({z}^1,\dots,{z}^n)\|:=\sqrt{\sum_{i=1}^n\|\nabla^2 g_i({z}^i)\|^2}\,,
\]
where the norms in the right hand-side are the $\ell_2$-norms of the Hessians of the $g_i$'s. When $z^i=z^j=z$ for every $i,j\in \{1,\ldots,n\}$, we use the short-hand notation
\[
T_g({z},\dots,{z})=:T_g({z}).
\]
\end{definition}

\begin{definition}\label{def:spectral} \rm
Given a symmetric matrix $A\in \dR^{n\times n}$, denote its set of eigenvalues by $\Sigma(A)$. Recall that, when the matrix norm is induced by the $\ell_2$-norm we have that $\|A\|=\max\{|\lambda|: \lambda\in \Sigma(A)\}=:{\rm SR}(A)$, the {\em spectral radius of $A$}. 
Denote by $\lambda_{\rm min}(A)$ the minimum eigenvalue of $A$, and by $\lambda_{\rm max}(A)$ the maximum eigenvalue of $A$.
\end{definition}

\begin{remark}\label{rem:Tg-lambda}\rm
Definition \ref{def:spectral} and the fact that ${\rm SR}(\nabla^2 g_i({z}^i))=\|\nabla^2 g_i({z}^i)\|$ for $i=1,\ldots,n$ directly yields
\[
\|T_g({z}^1,\dots,{z}^n)\|= \sqrt{\sum_{i=1}^n({\rm SR}(\nabla^2 g_i({z}^i)))^2}.
\]
\end{remark}

For future use, we now give an elementary fact. 

\begin{fact}\label{fact:elem}
Assume that $a\le b$ and $q\in [a,b]$. Denote by $c:=\max\{|a|, |b|\}$. The following hold. \begin{itemize}
\item[(i)] If $a\ge 0$ then $a^2\le q^2 \le b^2$
\item[(ii)] If $b\le 0$ then $b^2\le q^2 \le a^2$
\item[(iii)] If $a< 0< b$ then $0\le q^2 \le c^2$.
\end{itemize}
\end{fact}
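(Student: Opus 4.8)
The plan is to reduce all three claims to the elementary monotonicity behaviour of the map $t\mapsto t^2$, which is nondecreasing on $[0,\infty)$ and nonincreasing on $(-\infty,0]$. Throughout, the hypothesis $q\in[a,b]$ is read simply as the chain of inequalities $a\le q\le b$, and I would treat the three cases separately, since they correspond precisely to the three possible positions of the interval $[a,b]$ relative to the origin (entirely nonnegative, entirely nonpositive, or straddling $0$).

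For part (i), assuming $a\ge 0$ gives $0\le a\le q\le b$, so all three quantities lie in the half-line $[0,\infty)$, on which squaring preserves order; applying monotonicity to $a\le q$ and to $q\le b$ yields $a^2\le q^2\le b^2$ at once. Part (ii) is the mirror image: assuming $b\le 0$ gives $a\le q\le b\le 0$, so all three quantities lie in $(-\infty,0]$, where squaring reverses order, and the same two inequalities produce $a^2\ge q^2\ge b^2$, i.e. $b^2\le q^2\le a^2$.

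Part (iii) is the only case requiring a small extra argument, because when $a<0<b$ the sign of $q$ is not determined, so no single half-line contains all of $a,q,b$. The lower bound $0\le q^2$ is immediate. For the upper bound I would first establish $|q|\le c$ by a sub-case split on the sign of $q$: if $q\ge 0$ then $q\le b\le|b|\le c$, while if $q<0$ then $a\le q$ gives $|q|=-q\le -a=|a|\le c$; in either case $|q|\le c$. Since both sides are nonnegative, squaring this inequality yields $q^2\le c^2$, which completes the case.

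I do not anticipate any genuine obstacle, as the statement is elementary; the only point demanding care is the sign-dependent sub-case analysis in (iii), where one must resist squaring the two-sided bound $a\le q\le b$ directly (which is invalid across a sign change) and instead route the estimate through the absolute value $|q|$.
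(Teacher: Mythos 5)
Your proof is correct. The paper itself states this as an elementary fact and offers no proof at all, so there is nothing to compare against; your argument --- monotonicity of $t\mapsto t^2$ on each half-line for (i) and (ii), and routing case (iii) through the bound $|q|\le c$ rather than squaring the two-sided inequality across a sign change --- is exactly the standard verification one would supply, and it handles the one genuinely delicate point (the sign split in (iii)) correctly.
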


The concepts of rate of convergence and the asymptotic error constant will have an important role in our analysis, so we recall their definitions next.

\begin{definition} \label{def:convSpeed} \rm
Consider a method that generates a sequence $(x^k)\subseteq \mathbb{R}^n$ such that the sequence converges to ${x}^*$, where ${x}^k\neq {x}^*,\,\forall k$. If $\alpha>0$ and $\lambda>0$ with
\[
\lim_{k\rightarrow\infty}\frac{\|{x}^{k+1}-{x}^*\|}{\|{x}^k-{x}^*\|^\alpha} = \lambda\,,
\]
then $(x^k)$ is said to {\em converge to $x^*$ with order $\alpha$} and {\em asymptotic error constant $\lambda$}. When $\alpha=2$, we say that the method converges {\em quadratically}. 
\end{definition}

\subsection{Main Assumptions}
The following are our main assumptions for establishing quadratic convergence. We follow the analysis from \cite{DenSch1983}.
\begin{itemize}
\item[($H_0$)] Problem \eqref{NLE-pro} has a solution, denoted by $x^*$.  There exists $r>0$ such that\linebreak $f\in{\cal C}^1(B(x^*,r))$. Denote $D_1\assign B(x^*,r)$ throughout.

\item[($H_1$)] $J_f(x)$ is nonsingular for all $x\in D_1$ and there exists $\gamma_1>0$ such that
\begin{equation}\label{H1b}
\| J_f(x)^{-1}\|\le \gamma_1,\hbox{ for all }x\in D_1\,.
\end{equation}

\item[($H_2$)] The function $s$ is a ${\cal C}^1$-homeomorphism from $D_1$ to $s(D_1)$ and there exists $\gamma_2>0$ such that
\begin{equation}\label{H1c}
\| J_s(x)\|\le \gamma_2,\hbox{ for all }x\in D_1\,.
\end{equation}

\item[($H_3$)] Denote $D_2:= s(D_1)$. There exist $\beta_1,\beta_2>0$ such that $J_f\in\textrm{Lip}_{\beta_1}(D_1)$ and $J_{s^{-1}}\in\textrm{Lip}_{\beta_2}(D_2)$.
\end{itemize}

Given a set $A\subset \dR^n$, we denote by ${\rm cl}(A)$ the closure of the set $A$.

\begin{remark}\label{rem:M1M2}\rm
Assumption $(H_3)$ implies the existence of $M_1,M_2>0$ such that
\[
\|J_f(x)\|\leq M_1, \,\,\hbox{ and }\,\, \|J_{s^{-1}}(s(x))\|=\|[J_s(x)]^{-1}\|\leq M_2\,.
\]
Indeed, this follows from the fact that the mappings are continuous over the compact sets ${\rm cl}(D_1)$ and ${\rm cl}(D_2)$, respectively. 
\end{remark}

\begin{remark}\label{rem:H3}\rm
Assumption $(H_3)$ allows us to apply Lemma 4.1.16 in \cite{DenSch1983} to deduce that there exists $\epsilon>0, L_0,L_1>0$ such that
\[
    L_1\|z-z'\|\leq\left\|s^{-1}(z)-s^{-1}(z')\right\|\leq L_0\|z-z'\|
\]
for all $z,z'\in B(s(x^*),\epsilon)\cap s(D_1)$. Setting $z=s(x),z'=s(x')$ this implies
\[
    L_1\|s(x)-s(x')\|\leq\|x-x'\|\leq L_0\|s(x)-s(x')\|\,.
\]
\end{remark}

\noindent The authors of \cite{BuraKaya2012} proposed a generalized Newton method for solving~\eqref{NLE-pro} for $n=1$. This method can be described by the following iterative formula:
\begin{equation}\label{NM-1}
     g(x^k) = x^{k+1} = s^{-1}\left(s(x^k)-s'(x^k)\frac{f(x^k)}{f'(x^k)}\right),
\end{equation}
where $s:\dR\to \dR$ is a ${\cal C}^1$-invertible function in a neighbourhood of the solution. As mentioned in the Introduction, this modification can be seen as a preconditioning of the problem~\eqref{NLE-pro}, in which the choice of a suitable function $s$ can improve/enlarge the region of convergence of the method. Next we extend the above approach in a natural way to higher dimensions. Namely, for a ${\cal C}^1$-function $f:\dR^n\to \dR^n$, consider the problem of solving
\begin{equation}\label{NLE-pro-n}
f(x)=0,\quad x\in \dR^n.
\end{equation}
In order to solve problem \eqref{NLE-pro-n}, we replace the derivatives in \eqref{NM-1} by the corresponding Jacobians. More precisely, consider the function $g:\dR^n\to \dR^n$ defined by
\begin{equation}\label{eqn:multiGnewtonIter}
  g(x):=s^{-1}\left(s(x)-J_s(x)[J_f(x)]^{-1}f(x)\right),
\end{equation}
where $s:\mathbb{R}^n\rightarrow\mathbb{R}^n$ has an inverse $s^{-1}$, and $J_s({x})$, $J_f({x})$ are the (assumed nonsingular) Jacobians of $s$ and $f$ at $x$, respectively. It can be directly checked that a fixed point $x^*$ of $g$ is a solution of \eqref{NLE-pro-n}, as long as both Jacobians are invertible at $x^*$. The Generalized Newton iteration is obtained by the rule $g(x^k)=x^{k+1}$, where $g$ is the function defined in \eqref{eqn:multiGnewtonIter}.

\begin{definition}  \label{def:itGN} \rm
Assume that $(H_0)-(H_2)$ hold. Given $x^k\in D_1$, define
\begin{equation}\label{it:GN}
x^{k+1} := s^{-1}\left( s(x^k)-J_s(x^k)[J_f(x^k)]^{-1}f(x^k)\right).
\end{equation}
\end{definition}

In the next section we extend  the standard quadratic convergence results to the sequence defined by \eqref{it:GN}.

\section{Convergence of the Multivariate Generalized Newton Method}
\label{sec:conv}
\setcounter{lemma}{0}
\setcounter{theorem}{0}

The following is Lemma 4.1 from \cite{BuraKaya2012} rewritten for the multivariate case. This lemma states that the iteration \eqref{it:GN} coincides with the classical Newton iteration for the composite function $F:=f\circ s^{-1}$. 

\begin{lemma}\label{lem:transformation}
With the notation and hypothesis of Definition \ref{def:itGN}, let $y^k=s(x^k)$ and $F := f\circ s^{-1}$. The iteration \eqref{it:GN}  can be written as
\begin{equation} \label{lem:composite}
y^{k+1}=y^k - [J_F(y^k)]^{-1}F(y^k)\,.
\end{equation}
\end{lemma}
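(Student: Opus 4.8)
The plan is to apply the homeomorphism $s$ to both sides of the defining iteration \eqref{it:GN} and then recognize the resulting expression as the classical Newton step for $F$, using the Jacobian identity already recorded in Remark \ref{rem:Jacobian}. First I would use that $y^k = s(x^k)$ and that $s\circ s^{-1}$ is the identity on $s(D_1)$ to cancel the outer $s^{-1}$ in \eqref{it:GN}, obtaining
\[
y^{k+1} = s(x^{k+1}) = y^k - J_s(x^k)\,[J_f(x^k)]^{-1}\,f(x^k).
\]

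The next step is to rewrite each of the two factors on the right-hand side in terms of $F$ evaluated at $y^k$. For the function value, the identity $x^k = s^{-1}(y^k)$ gives immediately $F(y^k) = f(s^{-1}(y^k)) = f(x^k)$. For the Jacobian factor, I would invoke Remark \ref{rem:Jacobian} at the point $x = x^k$, whose conclusion \eqref{eq:inv2} states precisely that $[J_F(s(x^k))]^{-1} = J_s(x^k)[J_f(x^k)]^{-1}$, that is, $[J_F(y^k)]^{-1} = J_s(x^k)[J_f(x^k)]^{-1}$. Substituting both identities back collapses the display to $y^{k+1} = y^k - [J_F(y^k)]^{-1}F(y^k)$, which is exactly \eqref{lem:composite}.

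Since every step is an algebraic equality, there is no substantive obstacle; the proof is essentially a bookkeeping exercise in the chain rule. The only points requiring care are the well-definedness conditions: one must check that the hypotheses of Remark \ref{rem:Jacobian} hold at each iterate, and that $J_F(y^k)$ is genuinely invertible. Both are supplied by assumptions $(H_0)$--$(H_2)$ of Definition \ref{def:itGN}, since $(H_2)$ makes $s$ a ${\cal C}^1$-homeomorphism on $D_1$ (so $J_s(x^k)$ is invertible) and $(H_1)$ makes $J_f(x^k)$ nonsingular, whence invertibility of $J_F(y^k)$ follows from the product form in \eqref{eq:inv2}.
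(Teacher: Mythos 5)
Your proof is correct and follows essentially the same route as the paper: both arguments reduce to rewriting the iteration as $s(x^{k+1})=s(x^k)-J_s(x^k)[J_f(x^k)]^{-1}f(x^k)$ and invoking Remark \ref{rem:Jacobian} to identify $J_s(x^k)[J_f(x^k)]^{-1}$ with $[J_F(y^k)]^{-1}$; you simply traverse the equivalence from \eqref{it:GN} to \eqref{lem:composite} while the paper goes the other way. Your added remarks on invertibility of $J_F(y^k)$ are a harmless bonus.
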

\begin{proof}

Using the definitions of $y^k$ and $y^{k+1}$, and Remark \ref{rem:Jacobian}, we can write the iteration in~\eqref{lem:composite} as
\[
    s(x^{k+1}) = s(x^k)-J_s(x^k)[J_f(x^k)]^{-1}f(x^k)\,.
\]
Now applying $s^{-1}$ to this equality yields the iteration \eqref{it:GN}.
\end{proof}

We will use \cite[Theorem 5.2.1]{DenSch1983} which we quote next.

\begin{theorem}\label{thm:quadconv}
Let $F:\dR^n\rightarrow\dR^n$ be such that $F\in {\cal C}^1(D)$, for an open convex set $D\subseteq\dR^n$. Assume that there exists $\tilde{x}\in\dR^n$ such that $F(\tilde{x})=0$,
and $\delta,\beta,\gamma>0$ such that the following hold.
\begin{itemize}
    \item[(i)] $B(\tilde{x},\delta)\subseteq D$, with $ J_F(\tilde{x})$ invertible and $\|[J_F(\tilde{x})]^{-1}\|\leq\beta$.
    \item[(ii)]  $J_F\in\textrm{Lip}_\gamma(B(\tilde{x},\delta))$ (i.e., for all $x,y\in B(\tilde{x},\delta), \|J_F(x)-J_F(y)\|\leq\gamma\|x-y\|$).
\end{itemize}
Then, there exists $\epsilon>0$ such that for all $\tilde{x}_0\in B(\tilde{x},\epsilon)$, the sequence $(\tilde{x}^k)$ generated by the rule
\begin{equation}\label{eq_D}
    \tilde{x}^{k+1} = \tilde{x}^k-[J_F(\tilde{x}^k)]^{-1}F(\tilde{x}^k),
\end{equation}
 for all $k\geq 0$, has the following properties:
\begin{itemize}
    \item[(a)] $(\tilde{x}^k)$ is well defined (i.e. $[J_F(\tilde{x}^k)]^{-1}$ exists for all $k\geq 0$).
    \item[(b)] The convergence to $\tilde{x}$ is quadratic, namely,
\[
    \|\tilde{x}^{k+1}-\tilde{x}\|\leq\beta\gamma\|\tilde{x}^k-\tilde{x}\|^2\,,
\]
for all $k\geq 0$.
\end{itemize}

\end{theorem}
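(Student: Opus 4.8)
The plan is to prove this classical local convergence result by the standard two-ingredient argument: a perturbation (Banach) lemma to control the inverse Jacobian near $\tilde{x}$, together with a first-order Taylor estimate with Lipschitz-controlled remainder. First I would fix
\[
\epsilon \assign \min\left\{\delta,\ \frac{1}{2\beta\gamma}\right\}
\]
(or any strictly smaller positive value) and prove by induction on $k$ that, whenever $\tilde{x}_0\in B(\tilde{x},\epsilon)$, each iterate satisfies $\tilde{x}^k\in B(\tilde{x},\epsilon)$, the inverse $[J_F(\tilde{x}^k)]^{-1}$ exists, and the quadratic bound (b) holds. Conclusions (a) and (b) then follow immediately.

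For the inverse bound I would invoke the perturbation lemma. For any $x\in B(\tilde{x},\epsilon)$, hypothesis (ii) gives $\|J_F(x)-J_F(\tilde{x})\|\le\gamma\|x-\tilde{x}\|\le\gamma\epsilon$, so together with (i) we obtain $\|[J_F(\tilde{x})]^{-1}(J_F(x)-J_F(\tilde{x}))\|\le\beta\gamma\epsilon\le\tfrac12<1$. The Banach lemma then guarantees that $J_F(x)$ is invertible with
\[
\|[J_F(x)]^{-1}\|\le\frac{\beta}{1-\beta\gamma\epsilon}\le 2\beta .
\]
This establishes (a) for $\tilde{x}^k$ as soon as $\tilde{x}^k\in B(\tilde{x},\epsilon)$.

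For the quadratic estimate I would subtract $\tilde{x}$ from the iteration \eqref{eq_D} and use $F(\tilde{x})=0$ to write
\[
\tilde{x}^{k+1}-\tilde{x}=[J_F(\tilde{x}^k)]^{-1}\bigl(J_F(\tilde{x}^k)(\tilde{x}^k-\tilde{x})-(F(\tilde{x}^k)-F(\tilde{x}))\bigr).
\]
The parenthesised term is the first-order Taylor remainder, which I would bound through the integral representation
\[
F(\tilde{x})-F(\tilde{x}^k)-J_F(\tilde{x}^k)(\tilde{x}-\tilde{x}^k)=\int_0^1\bigl[J_F(\tilde{x}^k+t(\tilde{x}-\tilde{x}^k))-J_F(\tilde{x}^k)\bigr](\tilde{x}-\tilde{x}^k)\,\mathd t ,
\]
where the segment joining $\tilde{x}^k$ and $\tilde{x}$ lies in $B(\tilde{x},\delta)$ by convexity of the ball, so (ii) applies and yields the bound $\tfrac{\gamma}{2}\|\tilde{x}^k-\tilde{x}\|^2$. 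Combining with the inverse bound gives $\|\tilde{x}^{k+1}-\tilde{x}\|\le 2\beta\cdot\tfrac{\gamma}{2}\|\tilde{x}^k-\tilde{x}\|^2=\beta\gamma\|\tilde{x}^k-\tilde{x}\|^2$, which is exactly (b). Since $\beta\gamma\epsilon\le\tfrac12$, this also gives $\|\tilde{x}^{k+1}-\tilde{x}\|\le\tfrac12\|\tilde{x}^k-\tilde{x}\|<\epsilon$, so $\tilde{x}^{k+1}\in B(\tilde{x},\epsilon)$ and the induction closes.

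I expect the main obstacle to be the perturbation step, that is, upgrading the pointwise bound on $[J_F(\tilde{x})]^{-1}$ from (i) to a uniform bound on $[J_F(x)]^{-1}$ throughout the ball; this is precisely what forces the restriction $\epsilon\le 1/(2\beta\gamma)$ and is the only ingredient not already recorded in the excerpt. Everything else is routine: the Taylor remainder estimate is a direct Lipschitz bound on the integrand, and well-definedness together with (quadratic, indeed geometric) convergence follows from the induction once the inverse bound and the quadratic estimate are in place.
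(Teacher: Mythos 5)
Your proposal is correct, but note that the paper does not prove this statement at all: it is quoted verbatim as Theorem 5.2.1 of Dennis and Schnabel \cite{DenSch1983} and used as a black box. Your argument --- choosing $\epsilon=\min\{\delta,\,1/(2\beta\gamma)\}$, using the Banach perturbation lemma to get the uniform bound $\|[J_F(x)]^{-1}\|\le 2\beta$ on the ball, and bounding the Newton error via the integral form of the Taylor remainder with the Lipschitz constant $\gamma$ to obtain the factor $\gamma/2$ --- is precisely the standard proof given in that reference, and all steps (the invertibility, the $\beta\gamma$ constant in (b), and the induction keeping the iterates in $B(\tilde{x},\epsilon)$) check out.
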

 
Next we show that we can apply this theorem to our setting for a suitable choices of $F,\,\tilde{x}$ and $D$.

\begin{lemma}\label{lem:QC}
Assumptions ($H_0$)--($H_3$) imply that Conditions (i)--(ii) in Theorem \ref{thm:quadconv} hold for $F:=f\circ s^{-1}$, $\tilde{x}:=s(x^*)$ and $D:=D_1$. Consequently, there exists $\ve>0$ such that, for $y^0\in B(s(x^*),\ve)$, the sequence 
\[
y^{k+1}:=y^k-[J_{F}({y}^k)]^{-1}F({y}^k),
\]
for all $k\geq 0$, has the following properties:
\begin{itemize}
\item[(a)] $(y^k)$ is well defined (i.e. $[J_F(y^k)]^{-1}$ exists for all $k\geq 0$).
\item[(b)] The convergence to $s(x^*)$ is quadratic, namely,
\[
\|{y}^{k+1}-s(x^*)\|\leq \eta\|y^k-s(x^*)\|^2,\quad\forall{k\ge 0}\,,
\]
with $\eta:=\gamma_1\gamma_2 L_0(\dfrac{M_1\beta_2}{L_1}+M_2\beta_1)$.
\end{itemize}
\end{lemma}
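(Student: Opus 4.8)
The plan is to recognize, via Lemma \ref{lem:transformation}, that the sequence $(y^k)$ with $y^k=s(x^k)$ is precisely the classical Newton sequence for the composite $F:=f\circ s^{-1}$, so the whole statement reduces to checking that Conditions (i)--(ii) of Theorem \ref{thm:quadconv} hold for $F$, the point $\tilde x:=s(x^*)$, and a suitable open convex domain; conclusions (a)--(b) then follow verbatim from that theorem, and the constant $\eta$ is read off from the quadratic estimate in part (b). Since $s$ is a ${\cal C}^1$-homeomorphism, $D_2=s(D_1)$ is open and contains $\tilde x$, but it need not be convex; I would therefore fix $\delta>0$ small enough that $B(s(x^*),\delta)\subseteq D_2$ and $\delta\le\epsilon$, with $\epsilon$ the radius from Remark \ref{rem:H3}, and use the convex ball $B(\tilde x,\delta)$ as the domain $D$ demanded by the theorem. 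This sidesteps the non-convexity of $s(D_1)$ while keeping the bi-Lipschitz estimates of Remark \ref{rem:H3} available on $D$.

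For Condition (i), I would evaluate Remark \ref{rem:Jacobian} at $x=x^*$ to obtain $[J_F(s(x^*))]^{-1}=J_s(x^*)[J_f(x^*)]^{-1}$, which is well defined because $(H_1)$ and $(H_2)$ make both Jacobians invertible on $D_1$. Submultiplicativity of the operator norm together with the bounds \eqref{H1b} and \eqref{H1c} then yields $\|[J_F(s(x^*))]^{-1}\|\le\gamma_1\gamma_2$, so one takes $\beta:=\gamma_1\gamma_2$.

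Condition (ii), the Lipschitz estimate on $J_F$, is where the real work lies and is the step I expect to be the main obstacle. Using Remark \ref{rem:homeo} I would write, for $y=s(x)$, $J_F(y)=J_f(x)\,J_{s^{-1}}(y)$, and estimate the increment through the add-and-subtract decomposition
\[
J_f(x)J_{s^{-1}}(y)-J_f(x')J_{s^{-1}}(y')
= J_f(x)\bigl[J_{s^{-1}}(y)-J_{s^{-1}}(y')\bigr]+\bigl[J_f(x)-J_f(x')\bigr]J_{s^{-1}}(y').
\]
The first summand is controlled by $\|J_f(x)\|\le M_1$ (Remark \ref{rem:M1M2}) and the $(H_3)$ Lipschitz bound for $J_{s^{-1}}$, the second by $\|J_{s^{-1}}(y')\|\le M_2$ and the $(H_3)$ Lipschitz bound for $J_f$. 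The delicate point is that $J_f$ is Lipschitz in the $x$-increment whereas $J_{s^{-1}}$ is Lipschitz in the $y$-increment, so to collect everything against one increment I would first express both bounds in terms of $\|x-x'\|$, converting the $J_{s^{-1}}$ contribution via $\|s(x)-s(x')\|\le\|x-x'\|/L_1$ from Remark \ref{rem:H3}; this produces the factor $M_1\beta_2/L_1+M_2\beta_1$ on $\|x-x'\|$. A single final application of $\|x-x'\|\le L_0\|y-y'\|$ (again Remark \ref{rem:H3}) converts the whole estimate into $\|J_F(y)-J_F(y')\|\le\gamma\|y-y'\|$ with $\gamma:=L_0\bigl(M_1\beta_2/L_1+M_2\beta_1\bigr)$, valid on $B(\tilde x,\delta)$.

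With (i)--(ii) verified, Theorem \ref{thm:quadconv} supplies the radius $\ve>0$, the well-definedness asserted in (a), and the quadratic bound $\|y^{k+1}-s(x^*)\|\le\beta\gamma\,\|y^k-s(x^*)\|^2$. Substituting $\beta=\gamma_1\gamma_2$ and the $\gamma$ obtained above gives exactly $\eta=\gamma_1\gamma_2 L_0\bigl(M_1\beta_2/L_1+M_2\beta_1\bigr)$, which completes the proof. Apart from the bookkeeping of which Lipschitz constant lives in which variable—and the two resulting invocations of Remark \ref{rem:H3}—every remaining step is a routine use of norm submultiplicativity and the uniform bounds gathered in Remark \ref{rem:M1M2}.
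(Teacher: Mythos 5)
Your proposal is correct and follows essentially the same route as the paper: the same verification of Condition (i) via Remark \ref{rem:Jacobian} with $\beta=\gamma_1\gamma_2$, the same add-and-subtract decomposition for Condition (ii) with the two invocations of Remark \ref{rem:H3} to reconcile the $x$- and $y$-increments, and the same final constants $\gamma=L_0(M_1\beta_2/L_1+M_2\beta_1)$ and $\eta=\beta\gamma$. Your explicit remark that one should work on the convex ball $B(\tilde x,\delta)\subseteq D_2$ rather than on $s(D_1)$ itself is a small but welcome point of extra care that the paper handles only implicitly.
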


\begin{proof}
Note that the statements (a) and (b) involving the sequence $(y^k)$ will follow directly from Theorem \ref{thm:quadconv} once we establish Conditions (i)--(ii) for suitable constants.  Therefore, we proceed to prove (i) and (ii). By $(H_0)$ and the definitions of $F$ and $\tilde{x}$, we can write
\[
    F(\tilde{x})=f\circ s^{-1}(s(x^*))=f(x^*)=0.
\]
By $(H_2)$ and $(H_3)$ we have that $D_2=s(D_1)$ is an open neighbourhood of $\tilde{x}=s(x^*)$. Hence we can take $\delta>0$ such that $B(\tilde x, \delta)\subset D_2= s(B(x^*,r))$. Using Remark \ref{rem:Jacobian} we can write
\begin{equation}\label{eqn:J_F}
    J_F(\tilde{x})=J_{f\circ s^{-1}}(s(x^*))=J_f(x^*)[J_s(x^*)]^{-1}.
\end{equation}
By ($H_1$)--($H_2$) and Remark \ref{rem:homeo}, we deduce that the matrix on the right hand side of \eqref{eqn:J_F} is nonsingular and hence $J_F(\tilde{x})$ is nonsingular. Using Remark \ref{rem:homeo} again gives
\begin{equation}\label{eqn:bound}
    \left\|[J_F(\tilde{x})]^{-1}\right\|=\left\|J_s(x^*)[J_f(x^*)]^{-1}\right\|\leq\|J_s(x^*)\|\left\|[J_f(x^*)]^{-1}\right\|\leq\gamma_1\gamma_2,
\end{equation}
where we used ($H_1$)--($H_2$) in the last inequality. The expressions  \eqref{eqn:J_F}-\eqref{eqn:bound} imply that condition (i) in Theorem \ref{thm:quadconv} holds for $\tilde{x}\in B(\tilde x, \delta)$ with $\beta:=\gamma_1\gamma_2$. 

Next, we check Condition~(ii) in Theorem \ref{thm:quadconv}  for $J_F=J_{f\circ s^{-1}}$. Namely, we show now that there exists $\gamma>0$ such that $J_{f\circ s^{-1}}\in\textrm{Lip}_\gamma(B(\tilde{x},\delta))$. By $(H_2)$, given $z,z'\in D_2=s(D_1)$ there exist unique $x,x'\in D_1=B(x^*,r)$ such that $z=s(x), z'=s(x')$. Adding and subtracting a suitable term and using Remark \ref{rem:Jacobian} we obtain
\begin{equation}\label{eq_A}
\begin{array}{l}
\|J_{f\circ s^{-1}}(z)-J_{f\circ s^{-1}}(z')\|\\
\\
\qquad = \| J_{f\circ s^{-1}}(z)-J_f(x)[J_s(x')]^{-1} + J_f(x)[J_s(x')]^{-1} -J_{f\circ s^{-1}}(z')\|
\\
\\
  \qquad  \le \left\|J_f(x)\left([J_s(x)]^{-1}-[J_s(x')]^{-1}\right)\right\|+\left\|(J_f(x)-J_f(x'))[J_s(x')]^{-1}\right\| \\
    \\
\qquad \leq\|J_f(x)\|\left\|J_{s^{-1}}(s(x))-J_{s^{-1}}(s(x'))\right\|+\left\|[J_s(x')]^{-1}\right\|\|J_f(x)-J_f(x')\|.
\end{array}
\end{equation}
By $(H_3)$ we know that $J_{s^{-1}}\in\textrm{Lip}_{\beta_2}(D_2)$ and hence for every $x,x'\in D_1$ we have
\begin{equation}\label{eq_B}
    \|J_{s^{-1}}(s(x))-J_{s^{-1}}(s(x'))\|\leq\beta_2\|s(x)-s(x')\| \le \dfrac{\beta_2}{L_1}\|x-x'\|,
\end{equation}
where we have used Remark \ref{rem:H3} in the last inequality. By $(H_3)$ we also have that $J_f\in\textrm{Lip}_{\beta_1}(D_1)$, so 
\begin{equation}\label{eq_C}
   \|J_f(x)-J_f(x')\|\leq\beta_1\|x-x'\|,
\end{equation}
for every $x,x'\in D_1$. Using \eqref{eq_B}-\eqref{eq_C} in \eqref{eq_A}, together with Remark \ref{rem:M1M2} gives
\begin{align*}
 \|J_{f\circ s^{-1}}(z)-J_{f\circ s^{-1}}(z')\|   & \leq \left(\dfrac{M_1\beta_2}{L_1}+M_2\beta_1\right)\|x-x'\| =\Bar{M}\left\|s^{-1}(z)-s^{-1}(z')\right\| \\
 &\\
    \qquad& \leq\Bar{M}L_0\|z-z'\|\,;
\end{align*}
so $J_F\in\textrm{Lip}_{\Bar{M}L_0}(D_2)$, where $\Bar{M}=\dfrac{M_1\beta_2}{L_1}+M_2\beta_1$.  Hence Condition~(ii) holds for $\gamma:= \Bar{M}L_0$. This completes the proof of conditions (i) and (ii). Using now Theorem \ref{thm:quadconv} and the definitions of $\beta$ and $\gamma$, we obtain (a) and (b) for the sequence $(y^k)$ with the stated value of $\eta$.  
\end{proof}

\begin{theorem} \label{thm:gen_convergence}
With the notation of Definition \ref{def:itGN}, assume that $(H_0)$--$(H_3)$ hold. The sequence $(x^k)$ given by the rule \eqref{it:GN} is well defined and converges quadratically to $x^*$.
\end{theorem}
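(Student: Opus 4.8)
The plan is to reduce the statement to the already-established behaviour of the transformed sequence $y^k := s(x^k)$ and then transport the conclusions back through $s^{-1}$. First I would invoke Lemma \ref{lem:transformation}: with $y^k = s(x^k)$, the iteration \eqref{it:GN} is exactly the classical Newton iteration \eqref{lem:composite} for $F = f\circ s^{-1}$. Lemma \ref{lem:QC} then supplies a radius $\ve>0$ such that, whenever $y^0\in B(s(x^*),\ve)$, the sequence $(y^k)$ is well defined (each $[J_F(y^k)]^{-1}$ exists) and satisfies $\|y^{k+1}-s(x^*)\|\le \eta\,\|y^k-s(x^*)\|^2$ for all $k\ge 0$. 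Since $x^k=s^{-1}(y^k)$ and $s^{-1}$ maps $D_2=s(D_1)$ bijectively onto $D_1$, choosing $x^0$ so that $s(x^0)\in B(s(x^*),\ve)\subset D_2$ forces every $y^k\in D_2$, hence every $x^k\in D_1$; this yields well-definedness of the sequence $(x^k)$.

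Second, I would transfer convergence. Because $s^{-1}$ is continuous near $s(x^*)$ (indeed bi-Lipschitz there, by Remark \ref{rem:H3}) and $y^k\to s(x^*)$ by the quadratic bound above, we obtain $x^k=s^{-1}(y^k)\to s^{-1}(s(x^*))=x^*$.

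Third, and this is where the genuine work lies, I would establish the quadratic rate for $(x^k)$ by chaining the bi-Lipschitz estimates of Remark \ref{rem:H3} with the quadratic bound from Lemma \ref{lem:QC}. For iterates lying in the bi-Lipschitz neighbourhood, the upper estimate gives $\|x^{k+1}-x^*\|=\|s^{-1}(y^{k+1})-s^{-1}(s(x^*))\|\le L_0\|y^{k+1}-s(x^*)\|$; Lemma \ref{lem:QC} gives $\|y^{k+1}-s(x^*)\|\le \eta\,\|y^k-s(x^*)\|^2$; and the lower estimate gives $\|y^k-s(x^*)\|=\|s(x^k)-s(x^*)\|\le L_1^{-1}\|x^k-x^*\|$. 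Combining these three inequalities produces $\|x^{k+1}-x^*\|\le (L_0\eta/L_1^2)\,\|x^k-x^*\|^2$, which is the asserted quadratic convergence (matching the inequality form of Theorem \ref{thm:quadconv}), with asymptotic error constant bounded by $L_0\eta/L_1^2$.

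The main obstacle is ensuring every iterate remains inside the region where both the bi-Lipschitz estimates of Remark \ref{rem:H3} and the hypotheses of Lemma \ref{lem:QC} are simultaneously valid. I would resolve this by shrinking $\ve$, if necessary, so that $B(s(x^*),\ve)$ lies inside the bi-Lipschitz neighbourhood of Remark \ref{rem:H3} and so that $\eta\,\ve<1$. Then the quadratic bound from Lemma \ref{lem:QC} yields inductively $\|y^{k+1}-s(x^*)\|\le \eta\|y^k-s(x^*)\|\cdot\|y^k-s(x^*)\|<\|y^k-s(x^*)\|$, so all $y^k$ stay in $B(s(x^*),\ve)$ and the chained inequality is legitimate at every step, completing the argument.
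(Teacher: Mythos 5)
Your proposal is correct and follows essentially the same route as the paper: reduce to the transformed sequence $y^k=s(x^k)$ via Lemma \ref{lem:transformation}, apply Lemma \ref{lem:QC}, and chain the bi-Lipschitz bounds of Remark \ref{rem:H3} to transport the quadratic estimate back to $(x^k)$; your extra care about keeping iterates inside the bi-Lipschitz neighbourhood is a welcome refinement the paper leaves implicit. Incidentally, your constant $L_0\eta/L_1^{2}$ is the correct one, since $\|s(x^k)-s(x^*)\|^2\le L_1^{-2}\|x^k-x^*\|^2$; the paper's displayed constant $L_0\eta/L_1$ appears to drop a power of $L_1$.
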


\begin{proof}
By Lemma \ref{lem:QC}, the sequence $(y^{k})$ with $y^k:=s(x^k)$ is well defined and converges quadratically to $s(x^*)$. By Lemma \ref{lem:transformation}, the iteration on $(y^k)$ can be equivalently written as
\[
s(x^{k+1}) = s(x^k)-J_s(x^k)[J_f(x^k)]^{-1}f(x^k).
\]
We will show now that $(x^k)$ converges quadratically to $x^*$. Indeed, by part (b) of Lemma \ref{lem:QC}, we have that
 \begin{equation}\label{yk}
    \|{y}^{k+1}-{s(x^*)}\|\leq\eta\|{y}^k-{s(x^*)}\|^2,
 \end{equation}
 for $\eta$ as in Lemma \ref{lem:QC}(b). We can write
\begin{align*}
    \|x^{k+1}-x^*\| & = \left\|s^{-1}\left(s(x^k)-J_s(x^k)[J_f(x^k)]^{-1}f(x^k)\right)-x^*\right\| \\
    \\
    & = \left\|s^{-1}\left(s(x^k)-J_s(x^k)[J_f(x^k)]^{-1}f(x^k)\right)-s^{-1}(s(x^*))\right\|
\end{align*}
Applying Remark \ref{rem:H3} to bound the right-most expression, we obtain
\begin{align*}
   \|x^{k+1}-x^*\|   & \leq L_0\left\|s(x^k)-J_s(x^k)[J_f(x^k)]^{-1} f(x^k) - s(x^*)\right\| \\
  & \\
    & = L_0 \, \|y^{k+1}-s(x^*)\|,
\end{align*}
where we have used the definition of $y^{k+1}$ in the last inequality. We can now use \eqref{yk} in the above expression to derive
\begin{align*}
  \|x^{k+1}-x^*\|      & \leq L_0\eta\|y^k-s(x^*)\|^2=L_0\eta\|s(x^k)-s(x^*)\|^2.
\end{align*}
Applying again Remark \ref{rem:H3} to bound the right-most expression, we obtain
\begin{align*}
  \|x^{k+1}-x^*\|     & \leq \frac{L_0\eta}{L_1}\|x^k-x^*\|^2.
\end{align*}
So $(x^k)$ converges quadratically to $x^*$, as desired.
\end{proof}

\section{Bounds on the Asymptotic Error Constant \boldmath{$\lambda$}}
\label{ss:AEC}
\setcounter{definition}{0}
\setcounter{remark}{0}
\setcounter{lemma}{0}
\setcounter{theorem}{0}

While the results of the previous section hold for the specific iteration \eqref{it:GN}, the following results are true for any fixed-point iteration of the form $g(x^k)=x^{k+1}$. In this section we will always assume that $g$ is twice continuously differentiable.  

\begin{proposition} \label{prop:multivarLambdaProof}
Assume that $g({x}^*)={x}^*$ and that $J_g({x}^*)=0$. Consider the sequence $(x^k)$ defined by the fixed point iteration ${x}^{k+1}=g({x}^k)$. Then there exist 
  sequences $({\xi}_1^k),\dots,({\xi}_n^k)$ converging to $x^*$ such that
  the asymptotic error constant $\lambda$ given by Definition \ref{def:convSpeed} verifies 
  \[
  \lambda = \frac{1}{2}\,\lim_{k\rightarrow\infty}\dfrac{\|T_g({\xi}_1^k,\dots,{\xi}_n^k)_{({x}^k-{x}^*,\dots,{x}^k-{x}^*)}\|}{\|{x}^k-{x}^*\|^2},
  \]
where $T_g$ is as in Definition \ref{def:complicatedFunctions}.
\end{proposition}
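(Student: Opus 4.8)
The plan is to reduce the entire statement to a coordinate-wise second-order Taylor expansion of $g$ about the fixed point $x^*$, which is precisely the reason the operator $T_g$ in Definition \ref{def:complicatedFunctions} is built from $n$ distinct base points $z^1,\dots,z^n$. Since $g$ is twice continuously differentiable, I would apply Taylor's theorem with the Lagrange form of the remainder to each scalar component $g_i$ \emph{separately}: for each $k$ and each $i=1,\dots,n$ there is a point $\xi_i^k$ lying on the segment joining $x^*$ and $x^k$ such that
\[
g_i(x^k) = g_i(x^*) + \nabla g_i(x^*)^T(x^k - x^*) + \tfrac{1}{2}(x^k - x^*)^T \nabla^2 g_i(\xi_i^k)(x^k - x^*).
\]
The point to stress here is that one cannot write a single vector identity with one common intermediate point; the Lagrange remainder produces a generally different $\xi_i^k$ for each coordinate, and this is exactly the phenomenon that the multi-argument operator $T_g(z^1,\dots,z^n)$ is designed to encode.

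Next I would use the two hypotheses to annihilate the low-order terms. From $g(x^*)=x^*$ we obtain $g_i(x^*)=x_i^*$, and from $J_g(x^*)=0$ we obtain that its $i$-th row $\nabla g_i(x^*)^T$ vanishes. Combining this with $x^{k+1}=g(x^k)$, the displayed expansion collapses, for every $i=1,\dots,n$, to
\[
x_i^{k+1} - x_i^* = \tfrac{1}{2}\,(x^k - x^*)^T \nabla^2 g_i(\xi_i^k)(x^k - x^*).
\]
Reading this coordinate by coordinate against Definition \ref{def:complicatedFunctions} shows that the whole error vector is exactly
\[
x^{k+1} - x^* = \tfrac{1}{2}\,T_g(\xi_1^k,\dots,\xi_n^k)_{(x^k-x^*,\dots,x^k-x^*)}.
\]

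From here the conclusion is immediate. Taking Euclidean norms gives the exact equality $\|x^{k+1}-x^*\| = \tfrac{1}{2}\|T_g(\xi_1^k,\dots,\xi_n^k)_{(x^k-x^*,\dots,x^k-x^*)}\|$ for every $k$; dividing by $\|x^k-x^*\|^2$ and letting $k\to\infty$ then identifies the right-hand side of the proposition with $\lambda = \lim_k \|x^{k+1}-x^*\|/\|x^k-x^*\|^2$ from Definition \ref{def:convSpeed}. It remains only to record that the constructed sequences satisfy $\xi_i^k \to x^*$, which is automatic: since $\xi_i^k$ lies on the segment between $x^k$ and $x^*$ we have $\|\xi_i^k - x^*\|\le \|x^k-x^*\|\to 0$ as $(x^k)$ converges to $x^*$.

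I do not anticipate a genuine obstacle, since the argument is a direct transcription of the scalar Taylor theorem applied componentwise, with the hypothesis $J_g(x^*)=0$ serving only to remove the first-order term that would otherwise dominate and destroy the quadratic rate. The two points requiring care are: (i) avoiding the false temptation to use a vector-valued Taylor identity with a single intermediate point, which is exactly why $T_g$ admits $n$ separate arguments $\xi_1^k,\dots,\xi_n^k$; and (ii) noting that the limit defining $\lambda$ presupposes that $(x^k)\to x^*$ with $x^k\neq x^*$, so that the exact per-step identity above is what transfers this limit onto the $T_g$ expression.
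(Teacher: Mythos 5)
Your proposal is correct and follows essentially the same route as the paper's proof: a coordinate-wise second-order Taylor expansion with Lagrange remainder points $\xi_i^k$ on the segment from $x^*$ to $x^k$, cancellation of the zeroth- and first-order terms via $g(x^*)=x^*$ and $J_g(x^*)=0$, identification of the error vector with $\tfrac12 T_g(\xi_1^k,\dots,\xi_n^k)_{(x^k-x^*,\dots,x^k-x^*)}$, and passage to the limit. Your explicit remarks on why a single intermediate point cannot be used and on why $\xi_i^k\to x^*$ are both consistent with (and slightly more detailed than) the paper's argument.
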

\begin{proof}
Since $J_g({x}^*)=0$, it is well known that $(x^k)$ converges quadratically to $x^*$. 
We begin by writing $g(x)$ into a Taylor polynomial around $x^*$, coordinate by coordinate,
\[
g(x)=g({x}^*)+J_g({x}^*)(x-{x}^*)+\frac{1}{2}
\begin{bmatrix}
(x-{x}^*)^T\, \nabla^2 g_1({\xi}_1)\,(x-{x}^*) \\
\vdots \\
(x-{x}^*)^T\, \nabla^2 g_n({\xi}_n)\,(x-{x}^*)
\end{bmatrix},
\]
where ${\xi}_j$, $j=1,\ldots,n$, are between ${x}$ and ${x}^*$. Using now Definition \ref{def:complicatedFunctions} as well as the equalities $g({x}^*)={x}^*$ and $J_g({x}^*)=0$, we obtain
\[
g(x)={x}^* +\frac{1}{2}T_g({\xi}_1,\dots,{\xi}_n)_{({x}-{x}^*,\dots,{x}-{x}^*)}\,.
\]
By taking $x:=x^k$ and using the definition of the fixed-point iteration, we obtain
\[
g({x}^k)=x^{k+1}={x}^*+\frac{1}{2}T_g({\xi}_1^k,\dots,{\xi}_n^k)_{({x}^k-{x}^*,\dots,{x}^k-{x}^*)}\,,
\]
where ${\xi}_j^k$, $j=1,\ldots,n$, are between ${x}^k$ and ${x}^*$. Re-arranging, taking norms, and then dividing by $\|{x}^k-{x}^*\|^2$ yields
\[
\dfrac{ \|x^{k+1}-{x}^*\|}{\|{x}^k-{x}^*\|^2}=\frac{\|T_g({\xi}_1^k,\dots,{\xi}_n^k)_{({x}^k-{x}^*,\dots,{x}^k-{x}^*)}\|}{2 \|{x}^k-{x}^*\|^2}\,.
\]
We know that the sequences $({x}^k)$ and $({\xi}_j^k)$ converge to ${x}^*$ as $k\rightarrow\infty$ for $j=1,\dots,n$. Using these facts and taking limits yield
\begin{equation}\label{star}
\begin{array}{rcl}
\lambda &=& \ds\lim_{k\rightarrow\infty}\dfrac{\|x^{k+1}-{x}^*\|}{\|{x}^k-{x}^*\|^2} = \ds\frac{1}{2}\,\lim_{k\rightarrow \infty}\dfrac{\|T_g({\xi}_1^k,\dots,{\xi}_n^k)_{({x}^k-{x}^*,\dots,{x}^k-{x}^*)}\|}{ \|{x}^k-{x}^*\|^2},
\end{array}
\end{equation}
where we have also invoked Definition \ref{def:convSpeed}. This proves the proposition. 
\end{proof}

Our aim in this section is to use Proposition \ref{prop:multivarLambdaProof} to establish upper and lower bounds for the asymptotic error constant. For this we will need the following definition. For $j\in \{1,\ldots,n\}$, we denote by $[v]_j$ the $j$th coordinate of the vector $v\in \dR^n$.

\begin{definition} \label{def:spectral vectors} \rm
Given $n$ vectors $x^1,\ldots,x^n$, and the matrices  $\nabla^2 g_1(x^1),\ldots,\nabla^2 g_n(x^n)$, define the vector $\mu(x^1,\ldots,x^n)\in \dR^n_+$ as
\begin{equation}\label{eq:spectral vector2}
[\mu(x^1,\ldots,x^n)]_j :=
\left\{  \begin{array}{cl}
0\,, & \hbox{ if\ \ } \lambda_{\rm min}(\nabla^2 g_j(x^j))<0<\lambda_{\rm max}(\nabla^2 g_j(x^j))\,,\\[4mm]
\lambda_{\rm min}(\nabla^2 g_j(x^j))\,,  & \hbox{ if\ \ } \lambda_{\rm min}(\nabla^2 g_j(x^j))\ge 0\,, \\[4mm]
\left|\lambda_{\rm max}(\nabla^2 g_j(x^j))  \right|,   & \hbox{ if\ \ } \lambda_{\rm max}(\nabla^2 g_j(x^j))\le 0\,, \\
\end{array}  \right.  
\end{equation} 
for $j=1,\ldots,n$.  Define also the vector $\rho(x^1,\ldots,x^n)\in \dR^n_+$ as
\[
[\rho(x^1,\ldots,x^n)]_j:={\rm SR}(\nabla^2 g_j(x^j))=\|\nabla^2 g_j(x^j)\|,
\]
for $j=1,\ldots,n$.
For simplicity in appearance, we will write
$\mu(x)$ and $\rho(x)$ when $x^j=x$ for all $j=1,\ldots,n$. Namely,
\[
\mu(x):=\mu(x,\ldots,x)\quad \hbox{ and }\quad
\rho(x):=\rho(x,\ldots,x)\,.
\]
\end{definition}

\begin{remark}\label{rem:rho-cont}\rm
With the notation of Definition \ref{def:spectral}, it is well-known that the spectral radius ${\rm SR}(A)$ is a continuous function of the matrix $A$. Hence, if $g$ is twice continuously differentiable in a neighbourhood around $x$, the function $\rho$ will be a continuous function of $x$. Therefore, whenever $({\xi}_1^k),\dots,({\xi}_{n-1}^k)$ and $({\xi}_n^k)$ are sequences converging to the same point $z$, we will have $\rho({\xi}_1^k,\dots,{\xi}_n^k)$ converging to $\rho(z,\ldots,z)=\rho(z)$. A similar fact can be established for the function $\mu$. 
\end{remark}

\begin{remark}\label{rem:mu-def}\rm 
Clearly, the function $[\mu(\cdot)]_j$ can be equivalently defined as 
\[
\begin{array}{l}
[\mu(x^1,\ldots,x^n)]_j =\\[3mm]
\left\{  
\begin{array}{l}
0,\,  \hbox{\ \ if\ \ } \lambda_{\rm min}(\nabla^2 g_j(x^j))<0<\lambda_{\rm max}(\nabla^2 g_j(x^j))\,,\\[2mm]
\min\{|\lambda_{\rm min}(\nabla^2 g_j(x^j))|,|\lambda_{\rm max}(\nabla^2 g_j(x^j))|\}\,,
\hbox{\ \ if\ } \left\{
\begin{array}{l}
  \lambda_{\rm min}(\nabla^2 g_j(x^j))\ge 0\hbox{\ \ or } \\[1mm]
  \lambda_{\rm max}(\nabla^2 g_j(x^j))\le 0\,,  
\end{array}\right.
\end{array}  
\right.
\end{array}
\]
for $j=1,\ldots,n$. So the definition of $[\mu(\cdot)]_j$ is given over two complementary sets, one of them open and the other closed. Since in each of these sets $[\mu(\cdot)]_j$ is given by a continuous function, and the values of these two functions coincide at the boundary of the two complementary sets, we deduce that $\mu$ is a continuous function.
\end{remark}

The next technical result will be used in Theorem~\ref{thm:lambdaBounds}. Recall from Remark~\ref{rem:Tg-lambda} and Definition~\ref{def:complicatedFunctions} that 
\begin{equation}\label{eq:ro-Tg}
    \|T_g(x^1,\ldots,x^n)\|=\|\rho(x^1,\ldots,x^n)\|\,.
\end{equation}


\begin{proposition} \label{rem:Rayleigh}\rm
With the notation of Definition \ref{def:spectral vectors},  we have that
\[
([\mu(x^1,\ldots,x^n)]_j)^2\le \dfrac{(u^T\nabla^2 g_j(x^j) u)^2}{\|u\|^4}\le 
([\rho(x^1,\ldots,x^n)]_j)^2,
\]
for $j=1,\ldots,n$ and all nonzero $u\in \dR^n$. 
\end{proposition}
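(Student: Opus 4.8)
The plan is to recognize this as a statement about the Rayleigh quotient of the symmetric matrix $\nabla^2 g_j(x^j)$, and to prove the two inequalities separately for each fixed $j$. Write $A:=\nabla^2 g_j(x^j)$, which is symmetric because $g$ is twice continuously differentiable. The Rayleigh quotient $R(u):=\dfrac{u^T A u}{\|u\|^2}$ satisfies the classical two-sided bound $\lambda_{\rm min}(A)\le R(u)\le \lambda_{\rm max}(A)$ for every nonzero $u$. Since the quantity in the statement is $R(u)^2 = \dfrac{(u^T A u)^2}{\|u\|^4}$, the whole proposition reduces to controlling $R(u)^2$ given that $R(u)$ ranges over the interval $[\lambda_{\rm min}(A),\lambda_{\rm max}(A)]$.

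First I would recall the Rayleigh quotient bounds. These follow from the spectral theorem: diagonalizing $A$ in an orthonormal eigenbasis, $u^T A u = \sum_i \lambda_i c_i^2$ where $c_i$ are the coordinates of $u$ and $\|u\|^2=\sum_i c_i^2$, so $R(u)$ is a convex combination of the eigenvalues and hence lies in $[\lambda_{\rm min}(A),\lambda_{\rm max}(A)]$. Next, observe that the upper bound $R(u)^2\le ([\rho(x^1,\ldots,x^n)]_j)^2$ is immediate and \emph{uniform across all three cases} of Definition \ref{def:spectral vectors}: by Definition \ref{def:spectral}, $[\rho]_j={\rm SR}(A)=\max\{|\lambda_{\rm min}(A)|,|\lambda_{\rm max}(A)|\}$, and since $R(u)\in[\lambda_{\rm min}(A),\lambda_{\rm max}(A)]$ we have $|R(u)|\le{\rm SR}(A)=[\rho]_j$, giving $R(u)^2\le([\rho]_j)^2$.

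The lower bound is where the three-case definition of $[\mu]_j$ becomes essential, and this is the main point of the argument; here I would invoke Fact \ref{fact:elem} with the identifications $a:=\lambda_{\rm min}(A)$, $b:=\lambda_{\rm max}(A)$, and $q:=R(u)\in[a,b]$. In the case $\lambda_{\rm min}(A)\ge 0$ (so $a\ge 0$), Fact \ref{fact:elem}(i) gives $a^2\le q^2$, i.e. $([\mu]_j)^2=(\lambda_{\rm min}(A))^2\le R(u)^2$. In the case $\lambda_{\rm max}(A)\le 0$ (so $b\le 0$), Fact \ref{fact:elem}(ii) gives $b^2\le q^2$, i.e. $([\mu]_j)^2=|\lambda_{\rm max}(A)|^2\le R(u)^2$. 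In the mixed case $\lambda_{\rm min}(A)<0<\lambda_{\rm max}(A)$, Fact \ref{fact:elem}(iii) gives only $0\le q^2$, which matches the definition $[\mu]_j=0$. In every case $([\mu]_j)^2\le R(u)^2$, completing the lower bound.

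The only obstacle worth flagging is bookkeeping rather than mathematics: one must confirm that the definition of $[\mu]_j$ in \eqref{eq:spectral vector2} is precisely aligned, case by case, with the three branches of Fact \ref{fact:elem}, so that the value stored in $[\mu]_j$ is exactly the quantity Fact \ref{fact:elem} guarantees as a lower bound for $q^2$. Once this matching is verified, assembling the per-$j$ inequalities yields the proposition. No passage to the limit or continuity is needed here, since the statement is purely about a fixed tuple $(x^1,\ldots,x^n)$ and an arbitrary nonzero $u$; the continuity observations of Remarks \ref{rem:rho-cont} and \ref{rem:mu-def} are reserved for the subsequent application in Theorem \ref{thm:lambdaBounds}.
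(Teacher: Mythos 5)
Your proposal is correct and follows essentially the same route as the paper: the Rayleigh quotient bound $\lambda_{\rm min}\le u^TAu/\|u\|^2\le\lambda_{\rm max}$ followed by the three-case application of Fact \ref{fact:elem} matched against the branches of Definition \ref{def:spectral vectors}. The only cosmetic difference is that you dispatch the upper bound uniformly via $|R(u)|\le{\rm SR}(A)$ instead of case by case, which is a harmless streamlining.
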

\begin{proof}
Recall that by Rayleigh quotient properties,
\[
\lambda_{\rm min}(\nabla^2 g_j(x^j))\le \dfrac{u^T\nabla^2 g_j(x^j) u}{\|u\|^2}\le  \lambda_{\rm max}(\nabla^2 g_j(x^j)),
\]
for all $j=1,\ldots,n$. So for each fixed $j$ we can apply Fact \ref{fact:elem} with \[
\begin{array}{ll}
 a:=\lambda_{\rm min}(\nabla^2 g_j(x^j)),    & b:=\lambda_{\rm max}(\nabla^2 g_j(x^j)), \\
 &\\
 q=\dfrac{u^T\nabla^2 g_j(x^j) u}{\|u\|^2},    & c:=\max\{ |\lambda_{\rm min}(\nabla^2 g_j(x^j))|, |\lambda_{\rm max}(\nabla^2 g_j(x^j))| \}.
\end{array}
\]
Assume that $a=\lambda_{\rm min}(\nabla^2 g_j(x^j))\ge 0$. In this situation, by definition we have\linebreak $[\mu(x^1,\ldots,x^n)]_j=a= \lambda_{\rm min}(\nabla^2 g_j(x^j))$ and $[\rho(x^1,\ldots,x^n)]_j=b= \lambda_{\rm max}(\nabla^2 g_j(x^j))$. Now part (i) of Fact \ref{fact:elem} directly yields
\[
([\mu(x^1,\ldots,x^n)]_j)^2=a^2= (\lambda_{\rm min}(\nabla^2 g_j(x^j)))^2\le q^2= \dfrac{(u^T\nabla^2 g_j(x^j) u)^2}{\|u\|^4} \le b^2= (\lambda_{\rm max}(\nabla^2 g_j(x^j)))^2.
\]
If $b=\lambda_{\rm max}(\nabla^2 g_j(x^j))\le 0$ then $[\rho(x^1,\ldots,x^n)]_j=|\lambda_{\rm min}(\nabla^2 g_j(x^j))|$ and by part (ii) of Fact \ref{fact:elem} we have
\[
\begin{array}{rcl}
 ([\mu(x^1,\ldots,x^n)]_j)^2 =b^2&=&(|\lambda_{\rm max}(\nabla^2 g_j(x^j))|)^2 \\
 &&\\
 &&\le \dfrac{(u^T\nabla^2 g_j(x^j) u)^2}{\|u\|^4}=q^2 \le (|\lambda_{\rm min}(\nabla^2 g_j(x^j))|)^2\\
 &&\\
 &&=a^2=([\rho(x^1,\ldots,x^n)]_j)^2.
  \end{array}
\]
Finally, if $\lambda_{\rm min}(\nabla^2 g_j(x^j))<0<\lambda_{\rm max}(\nabla^2 g_j(x^j))$ then by definition $[\mu(x^1,\ldots,x^n)]_j=0$ and $[\rho(x^1,\ldots,x^n)]_j=\max\{ |\lambda_{\rm min}(\nabla^2 g_j(x^j))|, |\lambda_{\rm max}(\nabla^2 g_j(x^j))| \}=c$. Using part (iii) of Fact \ref{fact:elem} yields
\[
\begin{array}{rcl}
0= ([\mu(x^1,\ldots,x^n)]_j)^2 &\le & \dfrac{(u^T\nabla^2 g_j(x^j) u)^2}{\|u\|^4}=q^2\le c^2\\
&&\\
&&=
(\max\{ |\lambda_{\rm min}(\nabla^2 g_j(x^j))|, |\lambda_{\rm max}(\nabla^2 g_j(x^j))| \})^2\\
&&\\
&&= ([\rho(x^1,\ldots,x^n)]_j)^2.
  \end{array}
\]
This completes the proof.
\end{proof}

\begin{theorem}\label{thm:lambdaBounds}
Suppose $g(x^*)=x^*$ and $J_g(x^*)=0$, with $g$ twice continuously differentiable in a neighbourhood of $x^*$. Consider the asymptotic error constant $\lambda\ge 0$ for the multivariate iterative method $x^{k+1}=g(x^k)$. Then, it holds that 
\begin{equation*}
     \frac{1}{2}\|\mu(x^*)\| \leq \lambda \leq \frac{1}{2}\|T_g(x^*)\|,
\end{equation*}
where $\mu(x^*)$ is as in Definition~\ref{def:spectral vectors}, and $T_g$ as in Definition \ref{def:complicatedFunctions}.
\end{theorem}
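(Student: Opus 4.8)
The plan is to sandwich the quotient appearing in Proposition~\ref{prop:multivarLambdaProof} between $\|\mu\|$ and $\|\rho\|$ evaluated along the auxiliary sequences, and then pass to the limit using continuity. First I would invoke Proposition~\ref{prop:multivarLambdaProof} to produce sequences $(\xi_1^k),\dots,(\xi_n^k)$, each converging to $x^*$, such that
\[
2\lambda = \lim_{k\to\infty}\frac{\|T_g(\xi_1^k,\dots,\xi_n^k)_{(x^k-x^*,\dots,x^k-x^*)}\|}{\|x^k-x^*\|^2}.
\]
Writing $u^k := x^k - x^*$ (which is nonzero for every $k$ by hypothesis), I would unpack the definition of $T_g$ in Definition~\ref{def:complicatedFunctions} to get
\[
\|T_g(\xi_1^k,\dots,\xi_n^k)_{(u^k,\dots,u^k)}\|^2 = \sum_{j=1}^n \bigl((u^k)^T\nabla^2 g_j(\xi_j^k)\,u^k\bigr)^2.
\]

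Next I would apply Proposition~\ref{rem:Rayleigh} coordinatewise with $u=u^k\neq 0$: for each $j$,
\[
([\mu(\xi_1^k,\dots,\xi_n^k)]_j)^2\,\|u^k\|^4 \le \bigl((u^k)^T\nabla^2 g_j(\xi_j^k)\,u^k\bigr)^2 \le ([\rho(\xi_1^k,\dots,\xi_n^k)]_j)^2\,\|u^k\|^4.
\]
Summing over $j=1,\dots,n$, using the preceding display together with the definitions of $\|\mu\|$ and $\|\rho\|$, I obtain
\[
\|\mu(\xi_1^k,\dots,\xi_n^k)\|^2\,\|u^k\|^4 \le \|T_g(\xi_1^k,\dots,\xi_n^k)_{(u^k,\dots,u^k)}\|^2 \le \|\rho(\xi_1^k,\dots,\xi_n^k)\|^2\,\|u^k\|^4.
\]
Taking square roots and dividing by $\|u^k\|^2>0$ then yields, for every $k$,
\[
\|\mu(\xi_1^k,\dots,\xi_n^k)\| \le \frac{\|T_g(\xi_1^k,\dots,\xi_n^k)_{(u^k,\dots,u^k)}\|}{\|u^k\|^2} \le \|\rho(\xi_1^k,\dots,\xi_n^k)\|.
\]

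Finally I would pass to the limit $k\to\infty$ in this chain of inequalities. Since each $\xi_j^k\to x^*$, continuity of $\mu$ and $\rho$ (Remark~\ref{rem:rho-cont}, see also Remark~\ref{rem:mu-def}) gives $\|\mu(\xi_1^k,\dots,\xi_n^k)\|\to\|\mu(x^*)\|$ and $\|\rho(\xi_1^k,\dots,\xi_n^k)\|\to\|\rho(x^*)\|=\|T_g(x^*)\|$, the last equality being \eqref{eq:ro-Tg} evaluated at $x^1=\dots=x^n=x^*$; while the middle term converges to $2\lambda$ by the first display. Since weak inequalities are preserved under limits, the sandwich yields $\|\mu(x^*)\|\le 2\lambda\le\|T_g(x^*)\|$, and dividing by $2$ completes the proof. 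The only points requiring care are verifying that $u^k\neq 0$ so that Proposition~\ref{rem:Rayleigh} is applicable (guaranteed by $x^k\neq x^*$) and that the continuity in Remark~\ref{rem:rho-cont} legitimately lets the outer bounds converge; I expect the coordinatewise-to-aggregate bookkeeping with the $T_g$ notation to be the most error-prone, though conceptually routine, step.
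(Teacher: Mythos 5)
Your proposal is correct and follows essentially the same route as the paper: both sandwich the quotient from Proposition~\ref{prop:multivarLambdaProof} using the Rayleigh-quotient bounds of Proposition~\ref{rem:Rayleigh} and then pass to the limit via continuity of $\mu$ and $\rho$ and the identity \eqref{eq:ro-Tg}. The only cosmetic difference is that you obtain the upper bound by summing the coordinatewise $\rho$-bounds directly, whereas the paper invokes Lemma~\ref{lem:Tg} for that step; the two are equivalent since $\|\rho(\xi_1^k,\dots,\xi_n^k)\|=\|T_g(\xi_1^k,\dots,\xi_n^k)\|$.
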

\begin{proof}
Recall that, by \eqref{eq:ro-Tg}, $\|T_g(x^*)\|=\|\rho(x^*)\|$. Hence, it is enough to establish the upper bound with $\|\rho(x^*)\|$ instead of $\|T_g(x^*)\|$. By Proposition \ref{prop:multivarLambdaProof} we have that 
\[
  \lambda  =\lim_{k\rightarrow\infty}\dfrac{\|T_g({\xi}_1^k,\dots,{\xi}_n^k)_{({x}^k-{x}^*,\dots,{x}^k-{x}^*)}\|}{2 \|{x}^k-{x}^*\|^2},
\]
where $({\xi}_1^k),\ldots,({\xi}_n^k)$ are $n$ sequences converging to $x^*$. Using now Proposition~\ref{rem:Rayleigh} for $x^j=\xi_j^k$ and $u=x^k -x^*\neq 0$ we deduce that
\begin{equation}\label{eqn:rayleigh}
   ([\mu({\xi}_1^k,\dots,{\xi}_n^k)]_j)^2\le \dfrac{((x^k -x^*)^T\nabla^2 g_i(\xi_j^k) (x^k -x^*))^2}{\|(x^k -x^*)\|^4}\le 
([\rho({\xi}_1^k,\dots,{\xi}_n^k)]_j)^2.
\end{equation}
By definition of $T_g$ and Lemma \ref{lem:Tg} we have
\[
\|T_g({\xi}_1^k,\dots,{\xi}_n^k)_{({x}^k-{x}^*,\dots,{x}^k-{x}^*)}\|\le \|x^k-x^*\|^2 \|T_g({\xi}_1^k,\dots,{\xi}_n^k)\|.
\]
Combine this fact with \eqref{eqn:rayleigh} and Definition \ref{def:complicatedFunctions} to derive
\begin{equation}\label{eq:lambda}
\begin{array}{rcl}
    \frac{1}{2}\lim_{k\rightarrow\infty}\|\mu({\xi}_1^k,\dots,{\xi}_n^k)\| &\leq&
    \lim_{k\rightarrow\infty}\dfrac{\|T_g({\xi}_1^k,\dots,{\xi}_n^k)_{({x}^k-{x}^*,\dots,{x}^k-{x}^*)}\|}{2 \|{x}^k-{x}^*\|^2}\\
    &&\\
  &&\leq \frac{1}{2}\lim_{k\rightarrow\infty}\|T_g({\xi}_1^k,\dots,{\xi}_n^k)\|.
  \end{array}
\end{equation}

\noindent By Remarks \ref{rem:rho-cont}, \ref{rem:mu-def} and the fact that $g$ is twice continuously differentiable, the functions $T_g$ and $\mu$ are continuous, so $$\lim_{k\to\infty} \mu({\xi}_1^k,\dots,{\xi}_n^k)= \mu(x^*) \hbox{ and }
\lim_{k\to\infty} \|T_g({\xi}_1^k,\dots,{\xi}_n^k)\|= \|T_g(x^*)\|=\|\rho(x^*)\|,
$$
where we also used \eqref{eq:ro-Tg} in rightmost equality. These facts, \eqref{eq:lambda} and the definitions of $\lambda$, $\rho$ and $\mu$ yield
\begin{equation*}
    \frac{1}{2}\|\mu(x^*)\| \leq \lambda \leq \frac{1}{2}\|\rho(x^*)\|= \frac{1}{2}\|T_g(x^*)\|,
\end{equation*}
as required.

\end{proof}

\newpage
\section{Numerical Experiments}
\label{sec:NE}

In this section we compare the classical and generalized Newton methods on five example systems of equations of the form $f(x) = {\bf 0}$ as in \eqref{NLE-pro}, with two variables (where visualization is possible) as well as six variables.  The equations in these test problems involve cubic, quartic and exponential functions.  Using various choices of the generalizing function $s$, we look at both local and global behaviour of the classical and generalized methods, and we do this in the following sense.
\begin{itemize}
\item {\em Local behaviour}\,: For each test problem we check that, with either method, convergence to a solution is quadratic, verifying Theorem~\ref{thm:gen_convergence}.  We do this by obtaining numerical estimates of the asymptotic error constant $\lambda$.  Comparisons of the estimates of $\lambda$ for each method give us an idea as to which method is faster locally.  Recall that for any method that we consider the convergence rate is quadratic. So by comparing $\lambda$'s, we compare the local ``speeds'' of quadratically convergent methods.  In other words, the ratio of the $\lambda$'s of two methods will tell us how many times a method is ``faster'' or ``slower'' than the other, locally.  We also provide theoretical bounds on $\lambda$, i.e., intervals in which $\lambda$ lies, for each example by using Theorem~\ref{thm:lambdaBounds}.
    
\item {\em Global behaviour}\,:  It is well known that the main drawback of the classical Newton method is its dependence on the quality of the initial guess (or the starting point) used in the Newton iterations.  For the systems with two equations in two unknowns, we visualize graphically the colour-coded number of iterations that either method requires to converge, if at all, over domains of various sizes, with the set tolerance of $10^{-8}$. These graphs serve to demonstrate the value of the generalized method: The domain in which the generalized method converges in a reasonable number of iterations can be made larger, by choosing the generalizing function $s$ carefully.  We also present statistical information about the global convergence properties by means of a large number of randomly generated starting points for each method, in order to verify the information conveyed by the graphs.  This information ultimately leads to a decision as to which of the methods considered is best to use, on the average, in a given search domain.
\end{itemize}

For computations, we use {\sc Matlab} Release 2019b, update 5.  In getting the estimates of $\lambda$, variable precision arithmetic ({\tt vpa} of {\sc Matlab}) making use of a large number of digits is utilized to be able to obtain these estimates with relatively reliable number of significant figures.

The CPU times are reported by running Matlab on a 13-inch 2018 model MacBook Pro, with the operating system macOS Mojave (version 10.14.6), the processor 2.7 GHz Intel Core i7 and the memory 16 GB 2133 MHz LPDDR3.

The {\sc Matlab} code we have written to generate the colour-coded portraits of number of iterations is based on Cleve Moler's code for viewing fractals generated by univariate Newton iterations~\cite{Moler} in complex plane.   

The methodology used in obtaining the statistical information, such as the average number of iterations, the rate of success of a method, and the CPU times for each iteration and each successful run, on the average, are explained in detail only once, in the first example in Section~\ref{sec:quartic}.  For brevity, we avoid repetitions of this information in the subsequent four examples.

\subsection{Quartic equations}
\label{sec:quartic}

To compare the classical and generalized methods we will first consider the following example system involving simple quartic functions.
\begin{equation} \label{sys:quartics}
    f(x) = \begin{bmatrix}
      {x_2}x_1^3 - 1 \\[1mm]
      {x_1}x_2^3 - 1
    \end{bmatrix} = {\bf 0}\,,
\end{equation}
where $f:\dR^2\to\dR^2$ and ${\bf 0}\in\dR^2$. Clearly, the system \eqref{sys:quartics} has two real solutions; namely $x^* = (1,1)$ and $x^* = (-1,-1)$. The expression for the fixed-point map $g$ associated with this system can be derived by using \eqref{eqn:multiGnewtonIter} with a chosen $s$.  In Table~\ref{tbl:g_ex1}, we do this first with $s(x) = (x_1, x_2)$ for the system in \eqref{sys:quartics} and get $g$ for the classical Newton method.  The appearance of $x_1^3$ and $x_2^3$ in the first and second equations, respectively, prompts us to choose $s(x) =  (x_1^3, x_2^3)$ for the generalized Newton method. Then we use this $s$ to get the $g$ for the generalized method as displayed in Table~\ref{tbl:g_ex1}.  We refer to the method obtained in this way the {\em cube-generalized Newton method}.

\renewcommand{\arraystretch}{1.5}
\begin{table}[h]
\centering
\begin{tabular}{ccc}
$f(x)$ & $s(x)$ & $g(x)$ \\ \hline \\[-6mm]
$\begin{bmatrix}
{x_2}x_1^3-1 \\
{x_1}x_2^3-1
\end{bmatrix}$ & 
$\begin{bmatrix}
{x_1} \\
{x_2}
\end{bmatrix}$ & 
$\begin{bmatrix}
{x_1}-(2x_1^3x_2^3 - 3x_2^2 + x_1^2)/(8x_1^2x_2^3) \\
{x_2}-(2x_1^3x_2^3 - 3x_1^2 + x_2^2)/(8x_1^3x_2^2)
\end{bmatrix}$ \\[7mm]
  & 
$\begin{bmatrix}
x_1^3 \\
x_2^3
\end{bmatrix}$ & 
$\begin{bmatrix}
\sqrt[3]{x_1^3 - 3(2x_1^3x_2^3 - 3x_2^2 + x_1^2)/(8x_2^3}) \\
\sqrt[3]{x_2^3 - 3(2x_1^3x_2^3 - 3x_2^2 + x_1^2)/(8x_1^3})
\end{bmatrix}$ \\[6mm] \hline
\end{tabular}
\caption{\small\sf System~\eqref{sys:quartics}: Fixed-point map $g$ with different choices of $s$.}
\label{tbl:g_ex1}
\end{table}
\renewcommand{\arraystretch}{1}

By Theorem~\ref{thm:gen_convergence}, the fixed-point methods (classical and generalized Newton) using the choices of $g$ listed in Table~\ref{tbl:g_ex1} are quadratically convergent and this can numerically be verified.  What can one say about the asymptotic error constant?  Table~\ref{tbl:ex1_results} encapsulates the ensuing answer.  The values listed for $\lambda$ are the numerical estimates of the asymptotic error constant\ \ $\lim_{k\rightarrow\infty}{\|x^{k+1}-{x}^*\|}/{\|{x}^k-{x}^*\|^2}$\ \ from Definition~\ref{def:convSpeed}.  These estimates are the same for both of the solutions $x^* = (1,1)$ and $x^* = (-1,-1)$ (referred to as Solutions~1 \& 2) because of the symmetry of the equations in System~\eqref{sys:quartics}.  We note that the estimates of $\lambda$ consistently fall into the intervals defined by the theoretical bounds established for $\lambda$ in Theorem~\ref{thm:lambdaBounds}, which are also shown in the table.  The (estimated) ratio $\lambda_N/\lambda_{GN}$ of the asymptotic error constants of the classical and generalized Newton methods, respectively, implies that the generalized method is about three times faster near a solution, for this example.

\begin{table}[h]
\centering
{\small
\begin{tabular}{ccccc}
Soln & $s_i(x)$ & $[\|\mu(x^*)\|/2,\ \|\rho(x^*)\|/2]$ & $\lambda$ & $\lambda_N/\lambda_{GN}$ \\[1mm] \hline \\[-4mm]
1 \& 2 & $x_i$ & $[0,1.7]$ & $1.06$ &  \\[1mm]
 & $x_i^3$ & $[0,0.8]$ & $0.35$ & $3.0$  \\[1mm] \hline
\end{tabular}
\caption{\small\sf System~\eqref{sys:quartics}: Asymptotic error constants of the classical ($s_i(x) = x_i$, $i = 1,2$) and generalized ($s_i(x) = x_i^3$, $i = 1,2$) Newton methods.}
\label{tbl:ex1_results}}
\end{table}

We pointed out in the Introduction that, for a given solution, a quadratic convergence region is in general not known {\em a priori}. Next we graphically illustrate in Figure~\ref{fig:quartics} that the (quadratic) convergence regions about the solutions for the generalized Newton method we have devised for this example are larger than those resulting from the classical Newton method. We also look at the regions of convergence over larger domains than just the neighbourhoods of the solutions.

In the graphs in Figure~\ref{fig:quartics}, the number of iterations needed to converge with tolerance $10^{-8}$ to a solution from a given point (i.e., an initial guess) is colour-coded as indicated by the colour bar next to each graph: while 2--4 iteration runs are represented by dark blue, 14 or more iteration runs, which are regarded as ``{\em unsuccessful}\,,'' are represented by yellow.  The initial guesses are generated over a $1000\times1000$ grid in the {\em search domains} $[-3,3]^2$, $[-10,10]^2$ and $[-100,100]^2$.  The following immediate observations point to some desirable properties of the cube-generalized method for this example:
\begin{itemize}
    \item Overall, the graphs associated with the cube-generalized method have far smaller yellow regions.
    \item We note by looking at the $[-3,3]^2$-domain that the regions in which convergence is achieved in 2--6 iterations are much larger than that for the classical method.
    \item In particular, if the search domain is chosen to be much larger, for example $[-100,100]^2$, then the classical method is unlikely to converge, while the cube-generalized method has a much better chance to converge.
\end{itemize}

\begin{figure}[t!]
\centering
\begin{subfigure}{.5\textwidth}
\centering
\includegraphics[width=\textwidth]{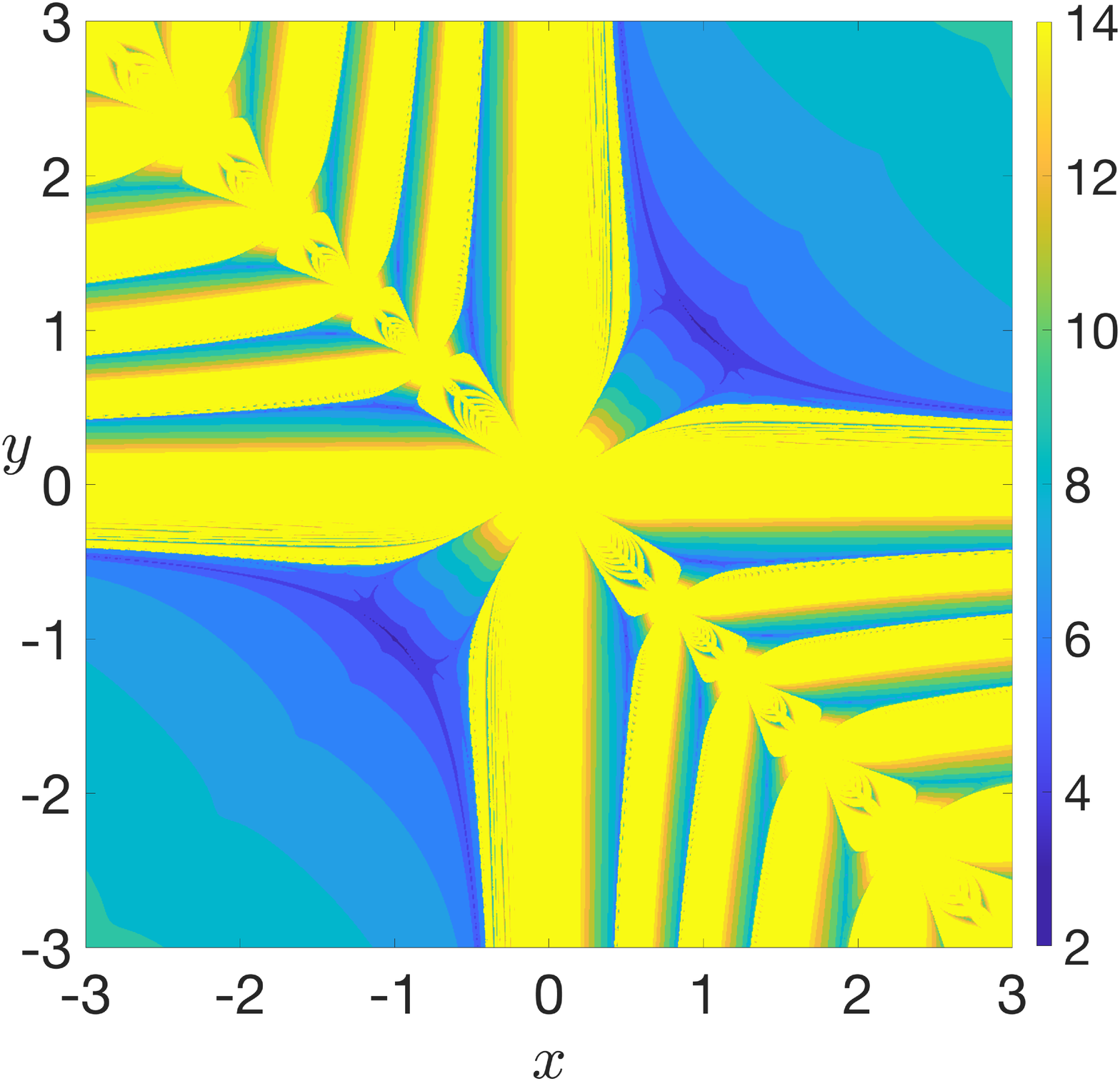}
\end{subfigure}
\hfill
\begin{subfigure}{.49\textwidth}
\centering
\includegraphics[width=\textwidth]{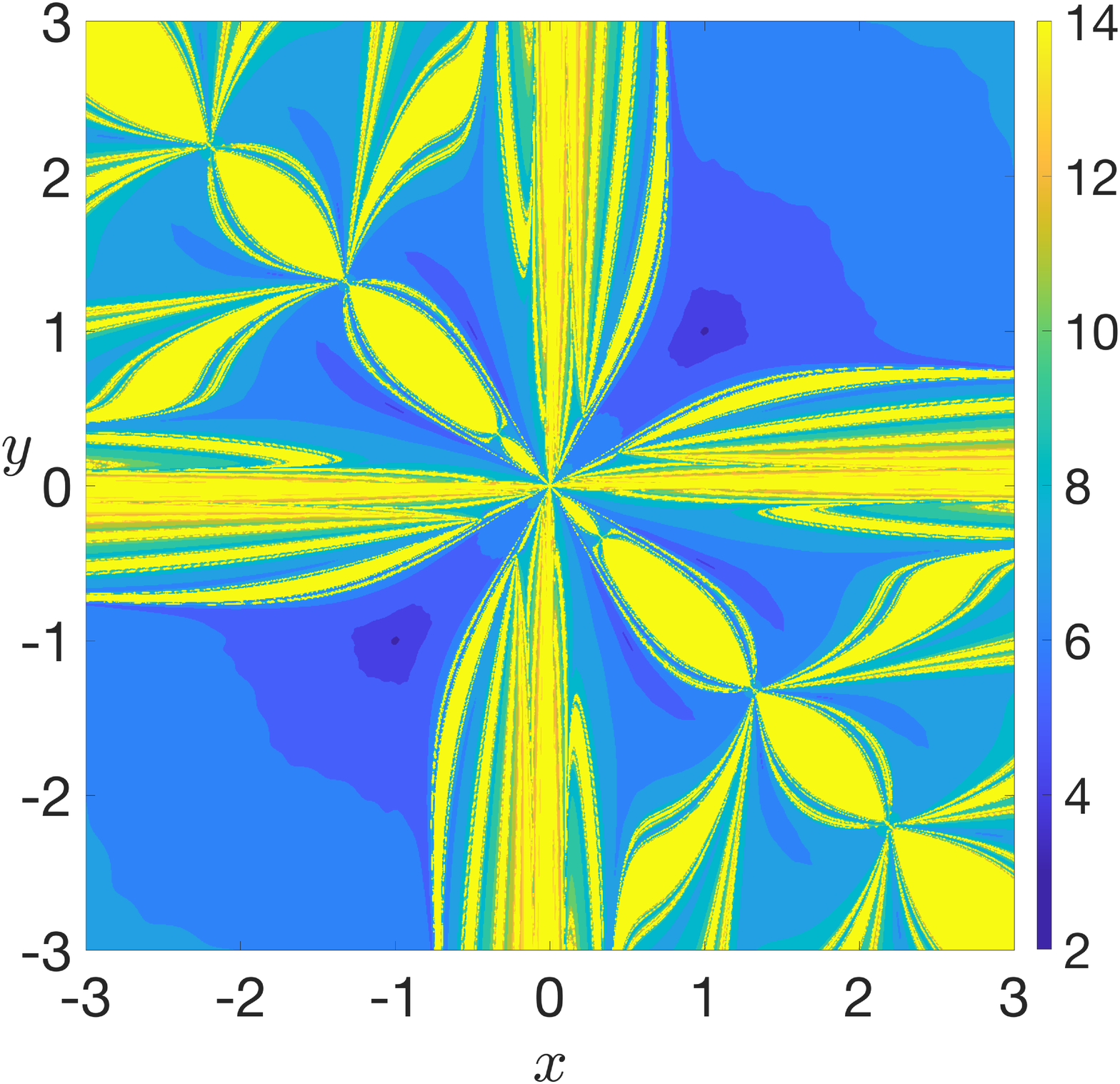}
\end{subfigure}
\begin{subfigure}{.5\textwidth}
\centering
\includegraphics[width=\textwidth]{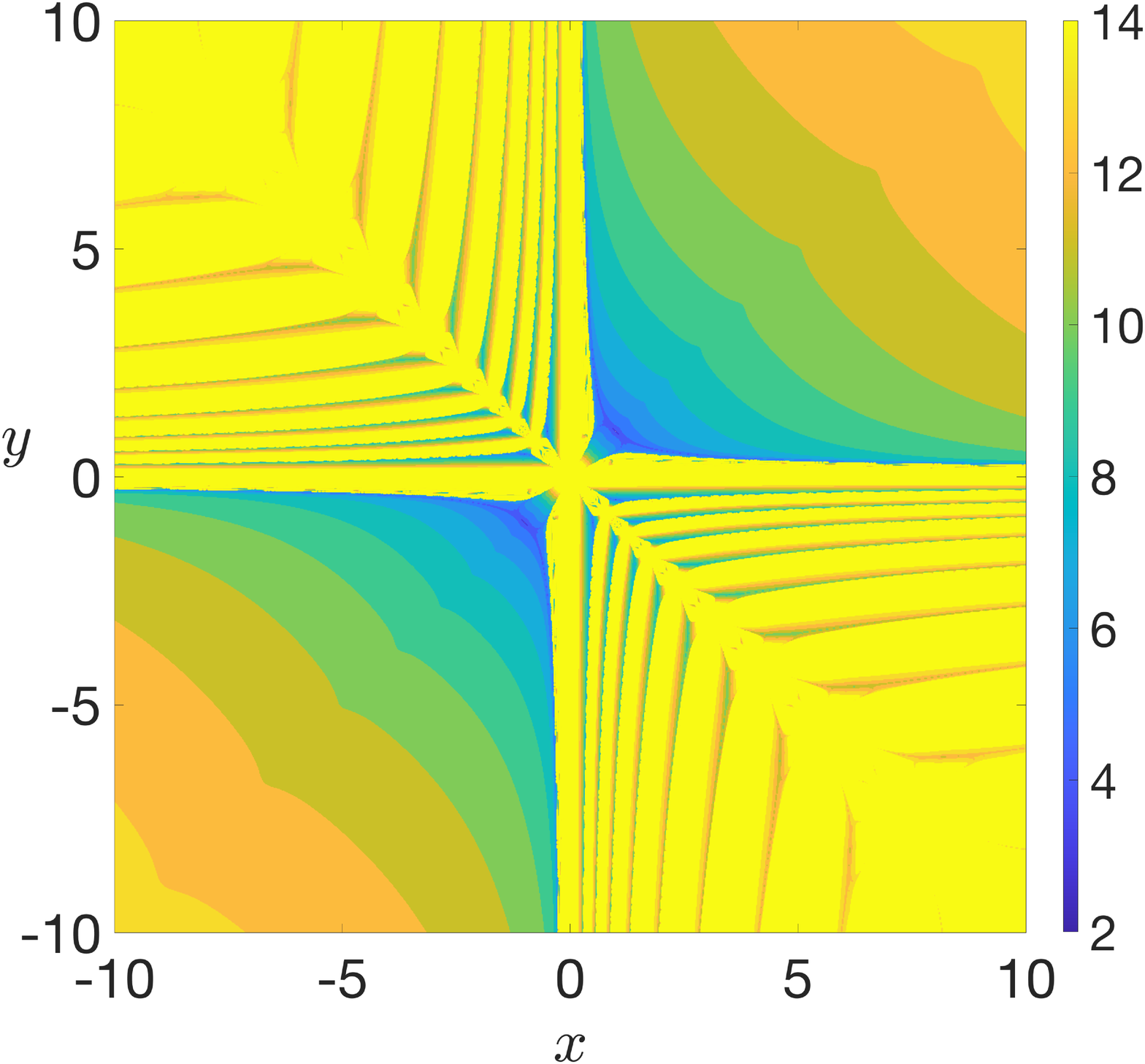}
\end{subfigure}
\hfill
\begin{subfigure}{.49\textwidth}
\centering
\includegraphics[width=\textwidth]{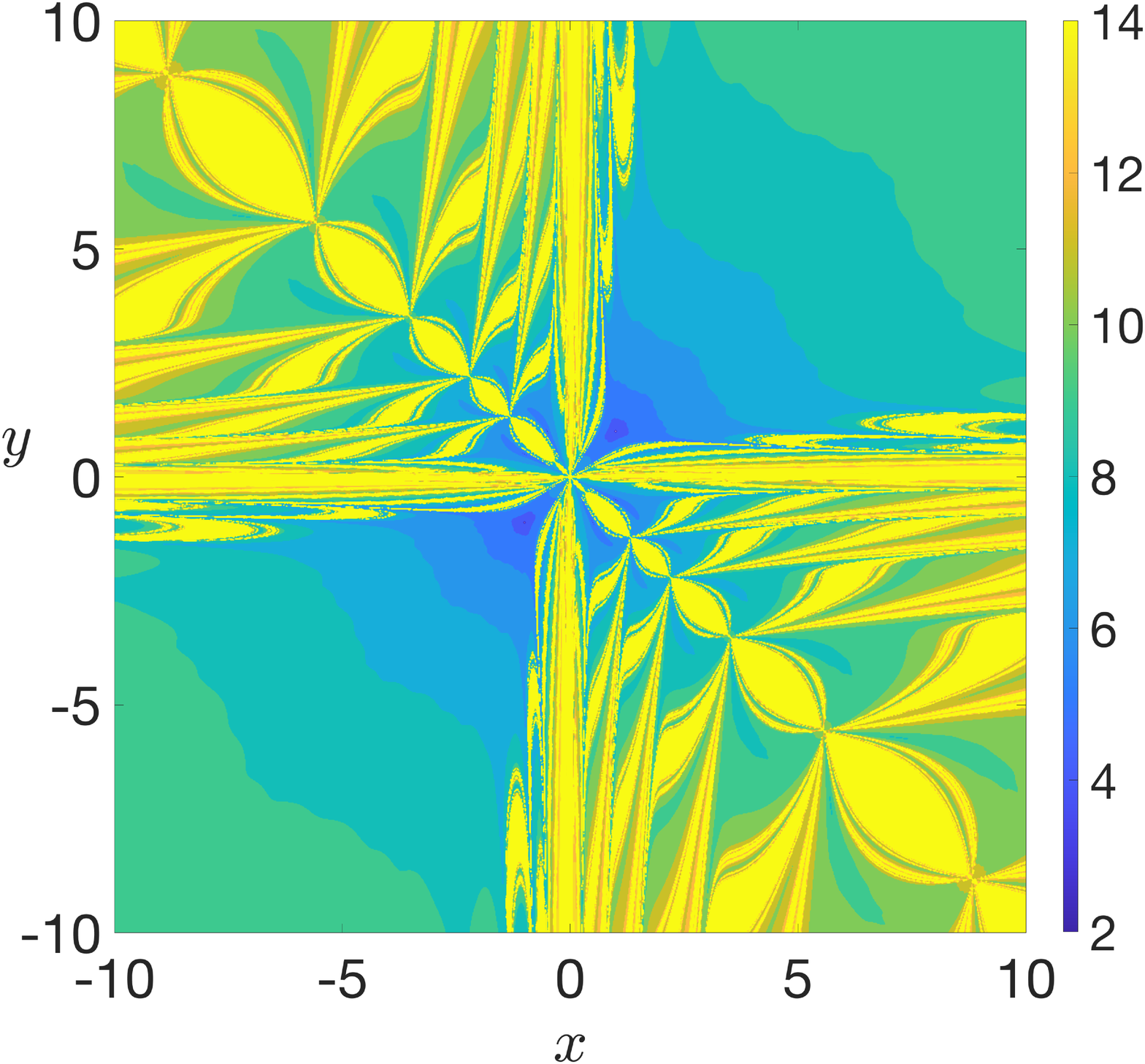}
\end{subfigure}
\begin{subfigure}{.49\textwidth}
\centering
\includegraphics[width=\textwidth]{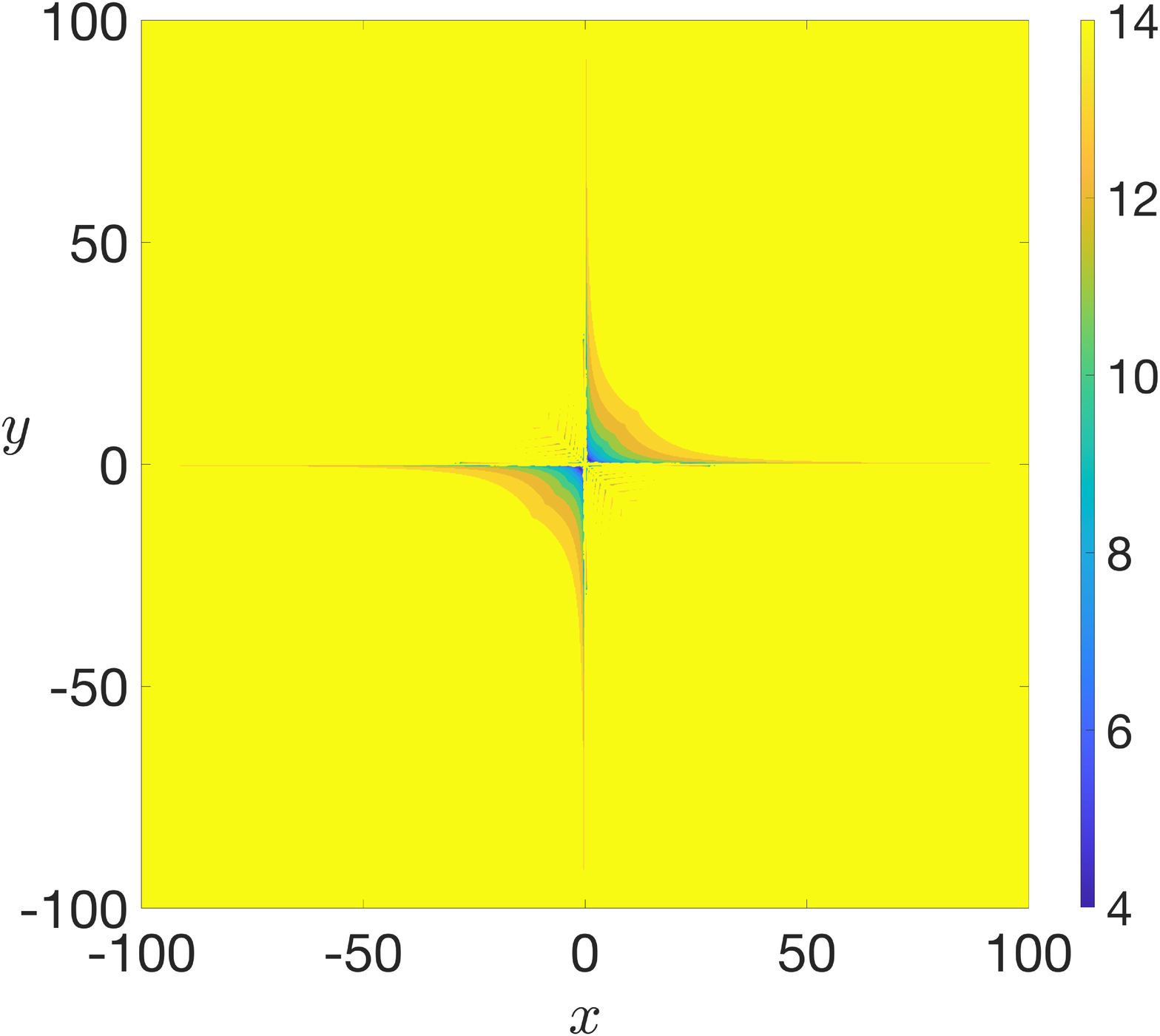}
\caption{$s(x) = (x_1, x_2)$}
\label{fig:cla3}
\end{subfigure}
\hfill
\begin{subfigure}{.49\textwidth}
\centering
\includegraphics[width=\textwidth]{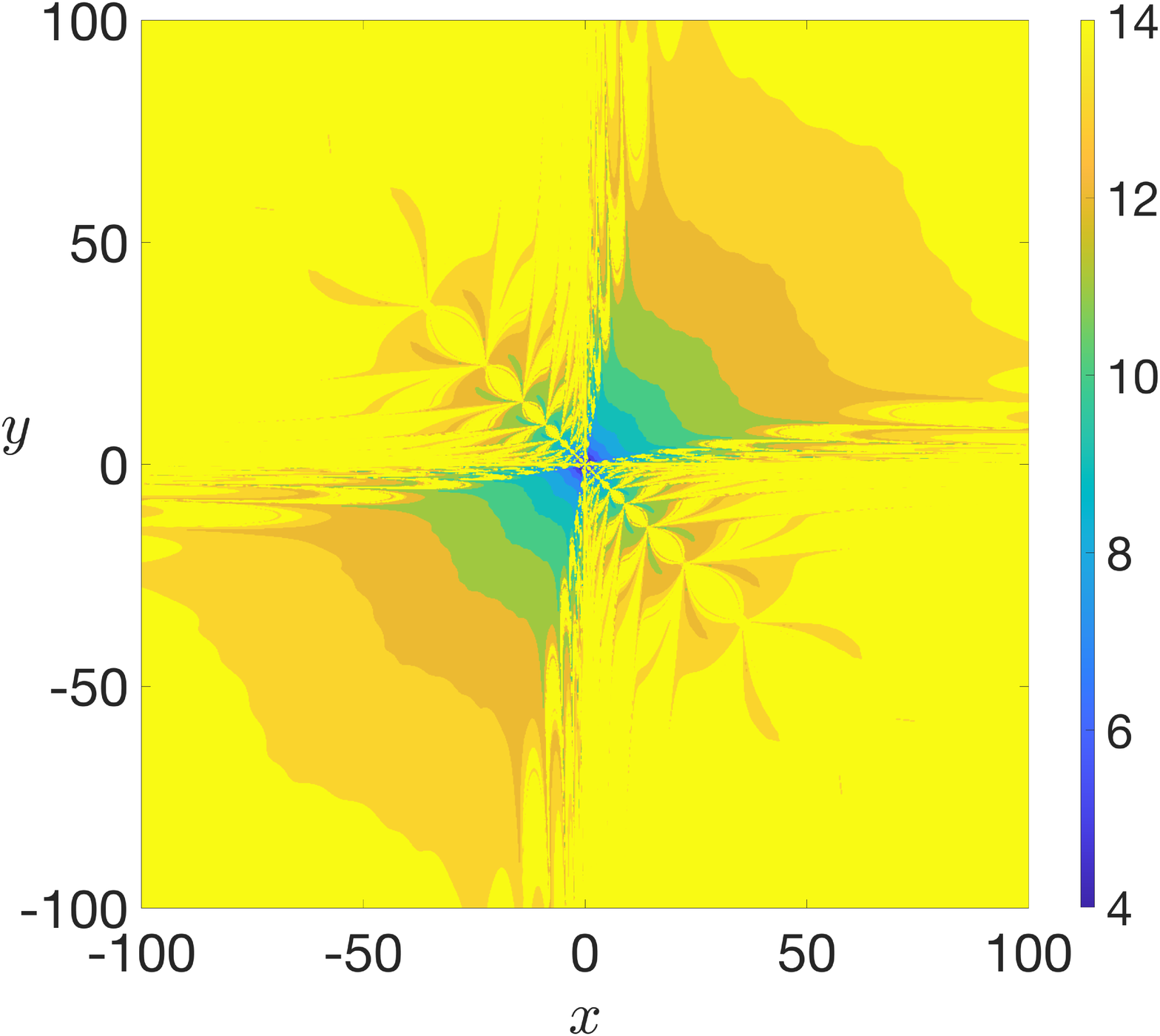}
\caption{$s(x) = (x_1^3, x_2^3)$}
\label{fig:gen3}
\end{subfigure}
\caption{\small\sf System~\eqref{sys:quartics}: Portraits of colour-coded number of iterations required for convergence.}
\label{fig:quartics}
\end{figure}

Next we carry out further numerical experiments to support some of our visual observations in Figure~\ref{fig:quartics}.  In each of the domains $[-3,3]^2$, $[-10,10]^2$ and $[-100,100]^2$, we randomly generate one million starting points and record the number of iterations needed to converge from each point.  We re-iterate that if the number of iterations is 14 or greater, then we deem that particular run {\em unsuccessful}.  In Table~\ref{tbl:quartics}, we list, for several typical choices of $s$,  the average number of iterations over each of the search domains for the successful runs.  We also list the percentage of the runs that were successful, namely the success rate.  

The CPU time taken by a single iteration of the successful runs on the average cannot be found reliably by simply measuring and recording each successful run time and then averaging them, since the very short CPU time of a single run (to the order of $10^{-6}$) cannot be measured reliably.  Therefore, with 100 random starting points, we repeat each successful run $10^{5}$ times and take the average.  This provides an accurate averaged measure of the CPU time per iteration, which is listed for each method in the last column of Table~\ref{tbl:quartics}.

\begin{table}[H]
    \centering
{\small
\begin{tabular}{c|rr|rr|rr|c}
 & \multicolumn{2}{|c|}{$[-3,3]^2$} & \multicolumn{2}{c|}{$[-10,10]^2$} & \multicolumn{2}{c|}{$[-100,100]^2$} & CPU time/ \\[0.5mm]
\cline{2-3} \cline{4-5} \cline{6-7}
 & Ave & Success & Ave & Success & Ave & Success & successful \\
$s_i(x)$ & iter & rate [\%] & iter & rate [\%] & iter & rate [\%] & iter [sec] \\[1mm] \hline\\[-4mm]
$x_i$ & 8.0 & 56.4 & 10.5 & 56.9 & 11.8 & 2.0 & $2.7\times10^{-6}$ \\
$x_i^3$ & 7.1 & 77.0 & 8.9 & 78.6 & 12.3 & 36.2 & $4.7\times10^{-6}$ \\ 
$\sinh(x_i)$ & 7.9 & 67.7 & 9.0 & 25.7 & 9.0 & 0.3 & $2.9\times10^{-6}$ \\
$e^{x_i}$ & 9.0 & 76.0 & 10.7 & 27.6 & 10.6 & 0.3 & $5.1\times10^{-6}$ \\
$\tan{x_i}$ & 5.9 & 10.9 & 6.5 & 14.8 & 7.1 & 0.3 & $3.0\times10^{-6}$ \\[1mm] \hline
\end{tabular}
\caption{\small\sf System~\eqref{sys:quartics}: Performance of the classical and generalized Newton methods with one million randomly generated starting points in domains of various sizes.}
\label{tbl:quartics}}
\end{table}

Table~\ref{tbl:quartics} tells us that over the search domain $[-3,3]^2$ the cube-generalized method is successful 77\% of the time it is run, while the success rate of the classical method is 56\%.  When we generate initial points randomly over a much larger domain, i.e., over $[-100,100]^2$ (this might as well be the situation when we have no knowledge of the location of a solution), the difference in the success rates of the two methods is striking: while the cube-generalized method is successful 36\% of the time, the classical method is successful a mere 2\% of the time it is run.  Although the latter case tells clearly what method to use in the domain $[-100,100]^2$, in the other cases, the success rates alone are not sufficient to tell which method will be (globally) ``better'' to use.

To be able to have a clear idea about which method is more desirable than the others, we need to find the time a method needs before it obtains a solution.  Suppose that, for a given method, the CPU time for a successful run is $3.1\times10^{-5}$ sec and the success rate is 50\%. Then, statistically speaking, on average one will need to run that method twice to get a single solution and the time required for this effort will be $6.2\times10^{-5}$ sec.  So we can find the time required to obtain a solution by a given method as: the CPU time per successful iteration, times the average number of iterations, divided by the success rate written as a decimal. The CPU times obtained in this way for each method are tabulated in Table~\ref{tbl2:quartics}.

\begin{table}[H]
    \centering
{\small
\begin{tabular}{c|ccc}
 & \multicolumn{3}{c}{Time needed to get a single soln [sec]} \\[1mm]
\cline{2-4} \\[-4mm]
$s_i(x)$ & $[-3,3]^2$ & $[-10,10]^2$ & $[-100,100]^2$ \\[1mm] \hline\\[-4mm]
$x_i$ & $3.8\times10^{-5}$ & \framebox{$5.0\times10^{-5}$} & $1.6\times10^{-3}$ \\
$x_i^3$ & $4.3\times10^{-5}$ & $5.3\times10^{-5}$ & \framebox{$1.6\times10^{-4}$} \\ 
$\sinh(x_i)$ & \framebox{$3.4\times10^{-5}$} & $1.0\times10^{-4}$ & $8.7\times10^{-3}$ \\
$e^{x_i}$ & $6.0\times10^{-5}$ & $2.0\times10^{-4}$ & $1.8\times10^{-2}$ \\
$\tan{x_i}$ & $1.6\times10^{-4}$ & $1.3\times10^{-4}$ & $7.1\times10^{-3}$ \\[1mm] \hline
\end{tabular}
\caption{\small\sf System~\eqref{sys:quartics}: CPU time needed on average by the classical and generalized Newton methods to obtain a solution in less than 14 iterations, based on the data in Table~\ref{tbl:quartics}.}
\label{tbl2:quartics}}
\end{table}

From the global convergence point of view, the method with the smallest CPU time over a domain in Table~\ref{tbl2:quartics} should be selected, which are framed for each of the three domains of concern.  For the domain $[-3,3]^2$, the time required by the classical Newton method is about 12\% worse than the generalized method with $s_i(x) = \sinh(x_i)$, $i = 1,2$, which we refer to as the {\em sinh-generalized Newton method}. For the domain $[-10,10]^2$, the classical method seems to be the best to use, although its closest contender, the cube-generalized method, takes only 6\% longer time to find a solution.  Over the domain $[-100,100]^2$, the cube-generalized method is clearly the best method to use, as the classical method needs about 10 times more time in obtaining a solution. To rephrase the latter statement: the cube-generalized method is expected to obtain 10 solutions by the time the classical method finds one.

\subsection{Equations involving exponentials}

The following system is a special instance of the Jennrich and Sampson test problem presented in~\cite{MorGarHil1981, JenSam1968}.
\begin{equation}\label{sys:Jennrich}
    f(x) = \begin{bmatrix}
      e^{x_1} + e^{x_2} - 3 \\[1mm]
      e^{2x_1} + e^{2x_2} - 6
    \end{bmatrix} = {\bf 0}\,.
\end{equation}

System~\eqref{sys:Jennrich} has two solutions, namely $x^* = (a, b)$ and $x^* = (b,a)$, referred to here as {\em Solutions} $1$ and $2$, respectively, where $a = \ln((3+\sqrt{3})/2)\approx 0.861211502516490$ and $b = \ln((3-\sqrt{3})/2)\approx -0.455746394408326$, with the approximations correct to 15 dp.  The appearance of the exponential functions in the equations prompts us to choose $s(x) =  (e^{x_1}, e^{x_2})$ for the generalized Newton method's fixed-point map in~\eqref{eqn:multiGnewtonIter}.  We refer to this method as {\em exp-generalized Newton method}.  As before, $s(x) =  (x_1, x_2)$ is used for the classical Newton method.

Table~\ref{tbl1:Jennrich} lists the numerical estimates and the theoretical intervals for the asymptotic error constant $\lambda$, giving some idea about the local behaviour around a solution.  However, we note that the ratio $\lambda_N/\lambda_{GN}$ is not so accurate in this case as the values obtained in the later iterations for $\lambda_N$ seem to fluctuate between 0.4 and 1.4, which we have averaged as 0.9. The approximate value listed for $\lambda_N/\lambda_{GN}$ implies that, close enough to a solution, the exp-generalized method is more than twice faster.

\begin{table}[H]
\centering
{\small
\begin{tabular}{ccccc}
Soln & $s_i(x)$ & $[\|\mu(x^*)\|/2,\ \|\rho(x^*)\|/2]$ & $\lambda$ & $\lambda_N/\lambda_{GN}$ \\[1mm] \hline \\[-4mm]
1 \& 2 & $x_i$ & $[0.05,2.81]$ & $0.9$\ \ \ &  \\[1mm]
 & $e^{x_i}$ & $[0.19,2.64]$ & $0.35$ & $2.6$ \\[1mm] \hline
\end{tabular}
\caption{\small\sf System~\eqref{sys:Jennrich}: Asymptotic error constants of the classical and generalized Newton methods.}
\label{tbl1:Jennrich}}
\end{table}

As in the example in Section~\ref{sec:quartic}, we depict, in Figure~\ref{fig:Jennrich} the colour-coded number of iterations needed to converge to any one of the two solutions. The success of the exp-generalized method is even more striking in this case: (i) the graphs for the exp-generalized method have far smaller yellow regions, (ii) local convergence regions (that are achieved in 4--6 iterations, shown in darker shades of blue) for the exp-generalized method are much larger and (iii) over the larger domain $[-10,10]^2$, the exp-generalized method has a far better chance of converging in less than 14 iterations.

\begin{figure}[t!]
\centering
\begin{subfigure}{.5\textwidth}
\centering
\includegraphics[width=\textwidth]{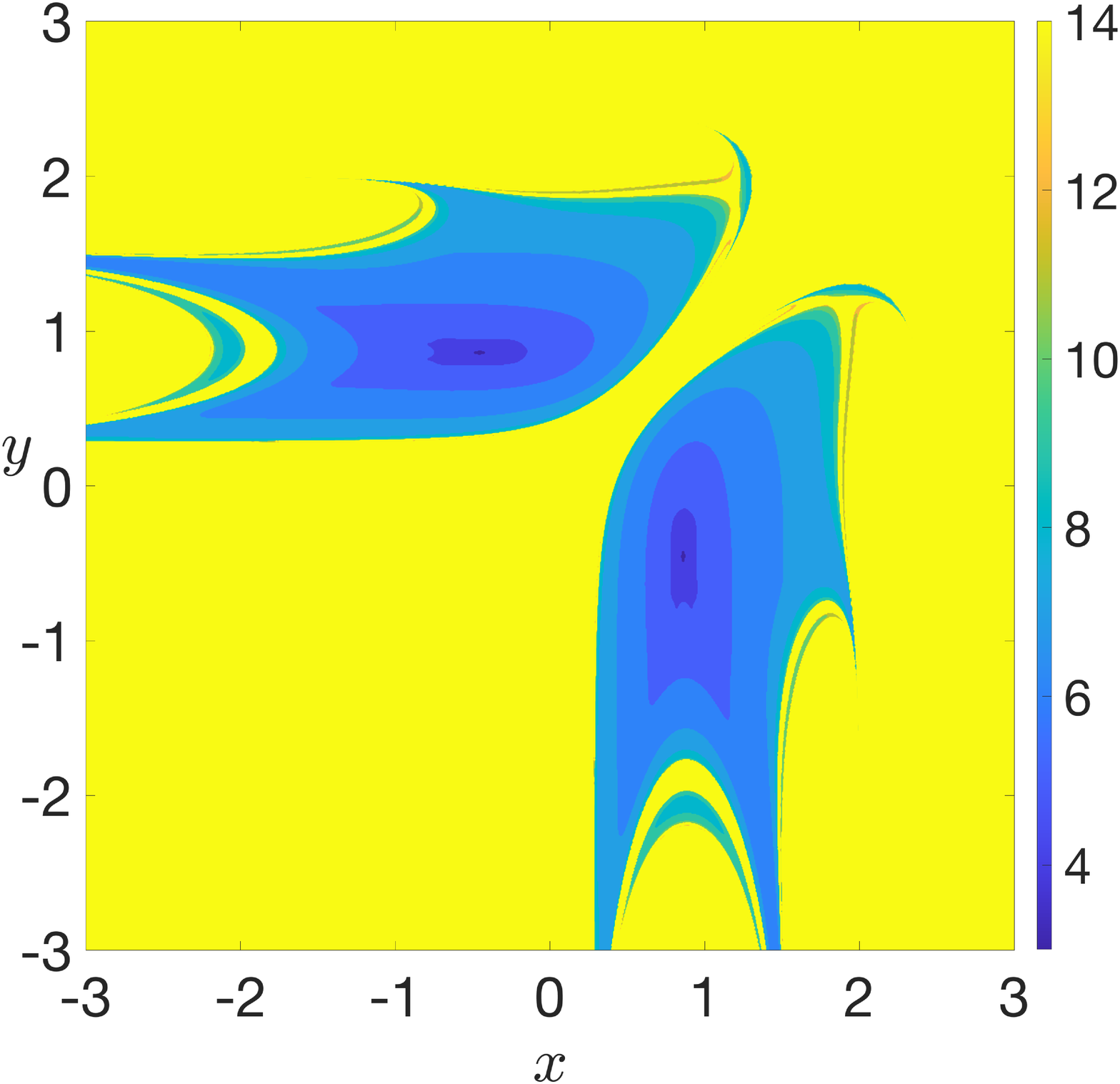}
\end{subfigure}
\hfill
\begin{subfigure}{.49\textwidth}
\centering
\includegraphics[width=\textwidth]{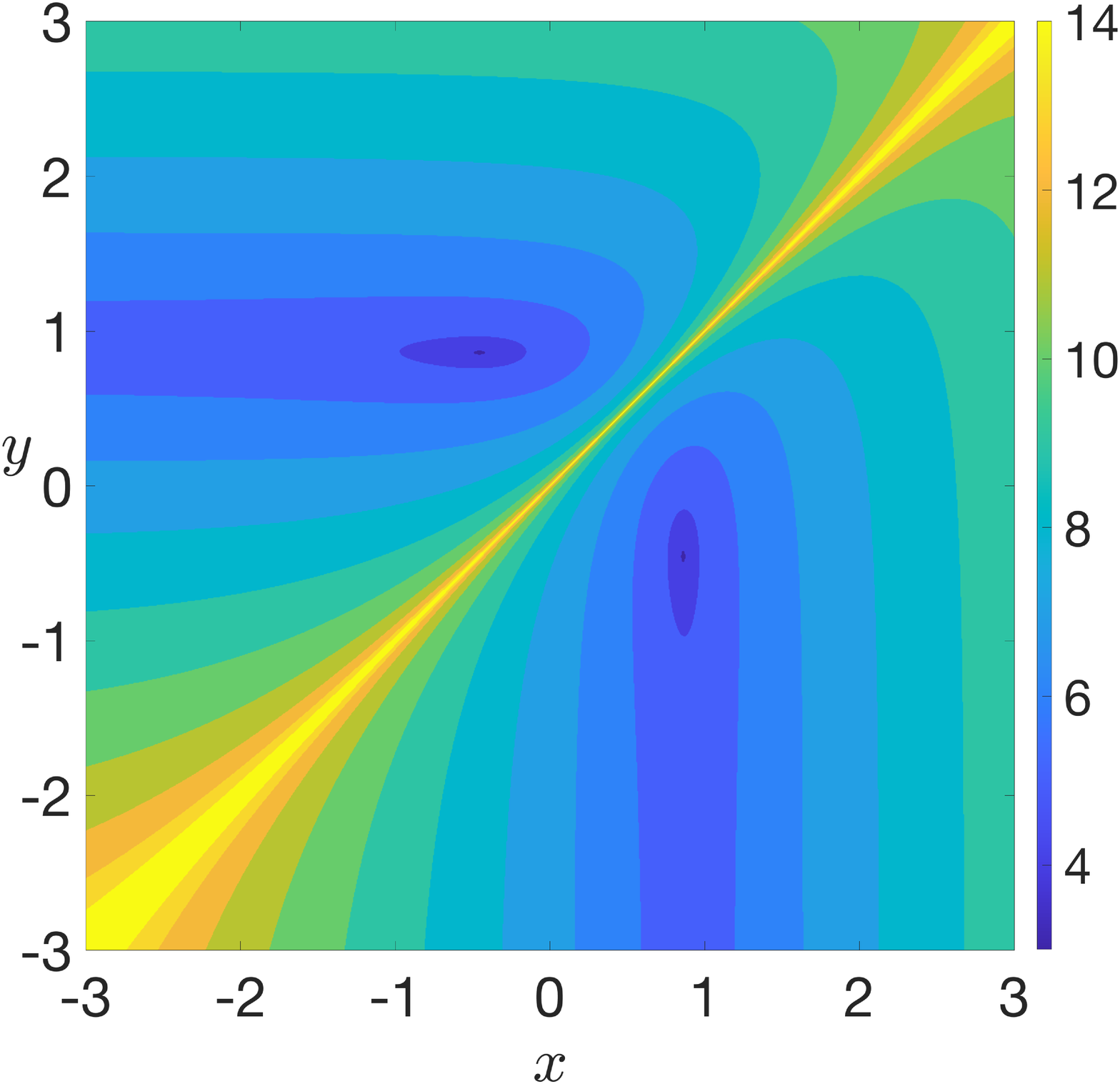}
\end{subfigure}
\begin{subfigure}{.5\textwidth}
\centering
\includegraphics[width=\textwidth]{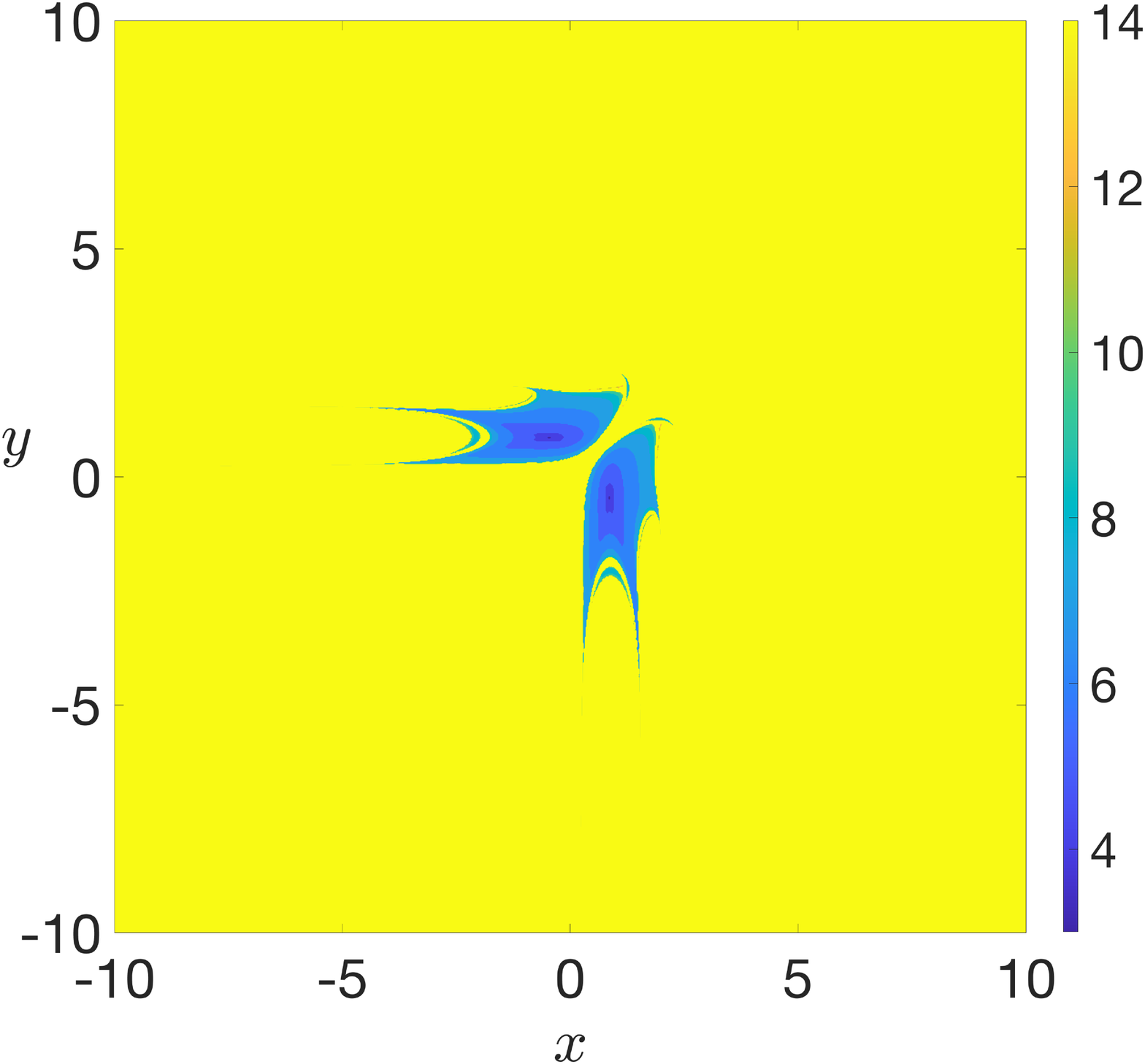}
\label{fig:cla3}
\caption{$s(x) = (x_1, x_2)$}
\end{subfigure}
\hfill
\begin{subfigure}{.49\textwidth}
\centering
\includegraphics[width=\textwidth]{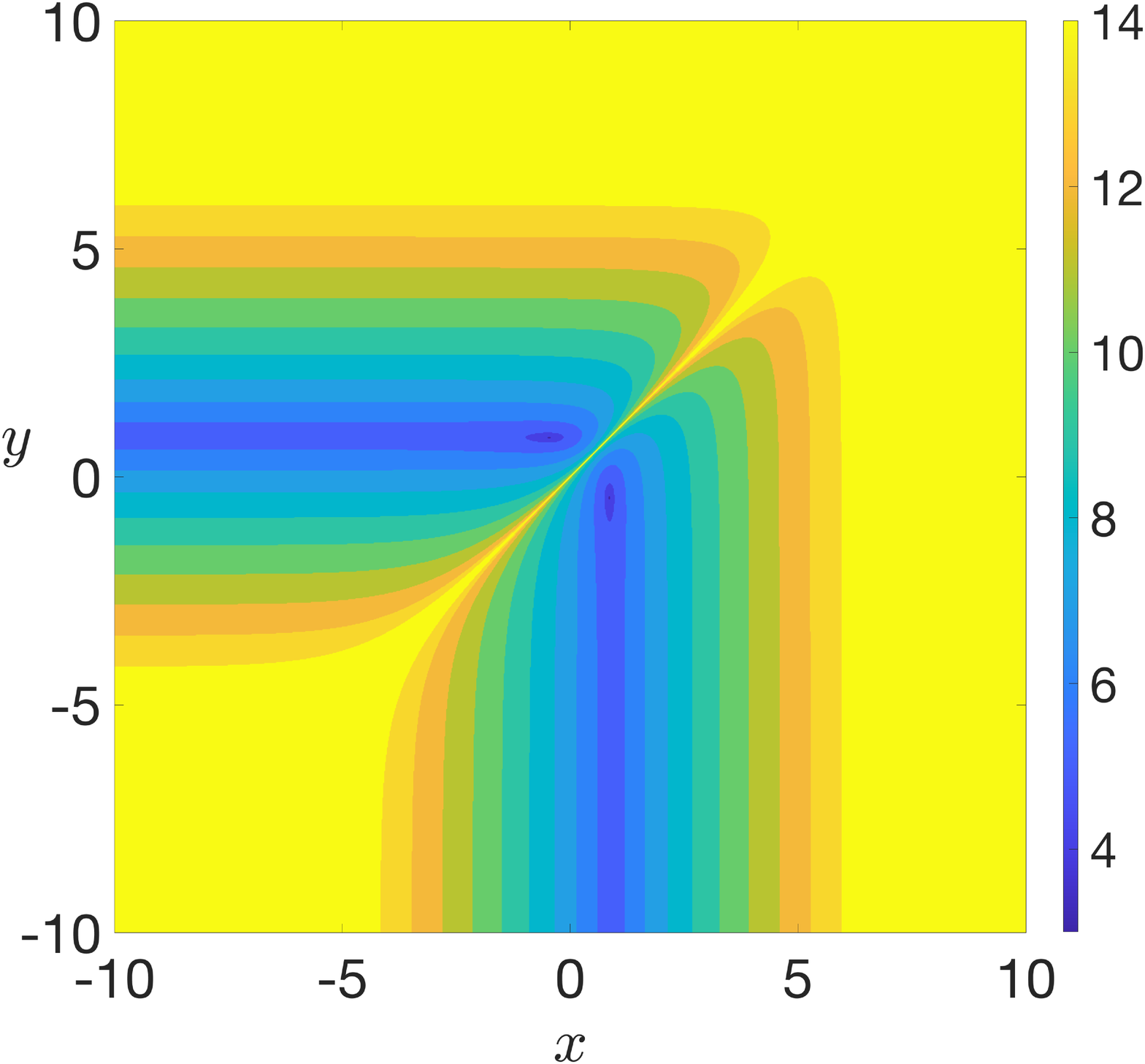}
\label{fig:gen3}
\caption{$s(x) = (e^{x_1}, e^{x_2})$}
\end{subfigure}
\caption{\small\sf System~\eqref{sys:Jennrich}: Portraits of colour-coded number of iterations required for convergence.}
\label{fig:Jennrich}
\end{figure}

Table~\ref{tbl2:Jennrich} provides some statistical data as in the case of Table~\ref{tbl:quartics} for System~\eqref{sys:quartics} in the previous subsection.  It should be noted that the percentage success rates in the table are in agreement with the percentage of the regions which are not yellow in Figure~\ref{fig:Jennrich}, for the cases of $s(x) = (x_1, x_2)$ and $s(x) = (e^{x_1}, e^{x_2})$.  Table~\ref{tbl2:Jennrich} also includes other choices of $s$ for a wider comparison.

\begin{table}[H]
    \centering
{\small
\begin{tabular}{c|rr|rr|c}
 & \multicolumn{2}{|c|}{$[-3,3]^2$} & \multicolumn{2}{c|}{$[-10,10]^2$} & CPU time/ \\[0.5mm]
\cline{2-3} \cline{4-5}
 & Ave & Success & Ave & Success & successful \\
$s_i(x)$ & iter & rate [\%] & iter & rate [\%] & iter [sec] \\[1mm] \hline\\[-4mm]
$x_i$ & 6.6 & 25.0 & 6.7 & 2.4 & $2.7\times10^{-6}$ \\
$x_i^3$ & 7.3 & 12.3 & 7.3 & 1.1 &  $4.6\times10^{-6}$ \\ 
$\sinh(x_i)$ & 6.2 & 17.4 & 6.2 & 1.6 & $2.9\times10^{-6}$ \\
$e^{x_i}$ & 7.8 & 98.3 & 9.6 & 53.3 & $6.7\times10^{-6}$ \\
$\tan{x_i}$ & 6.1 & 9.4 & 6.4 & 10.0 & $2.9\times10^{-6}$ \\[1mm] \hline
\end{tabular}
\caption{\small\sf System~\eqref{sys:Jennrich}: Performance of the classical and generalized Newton methods with one million randomly generated starting points in domains of various sizes.}
\label{tbl2:Jennrich}}
\end{table}

When successful the CPU time one iteration of the classical Newton method spends on the average (over the domain $[-3,3]^2$) is $2.7\times10^{-6}$ sec.  The same CPU time for the exp-generalized Newton method is $6.7\times10^{-6}$ sec, which is about 2.5 times longer. On the other hand, over the domain $[-3,3]^2$, the chance of finding a solution for the exp-generalized method in less than 14 iterations is nearly 4 times higher than using the classical method.  Moreover, over the domain $[-10,10]^2$, the exp-generalized method is 23 times more likely to find a solution in the same manner.  These likelihoods of success which are greatly in favour of the exp-generalized method seems to offset the higher computational times per iteration.

To make sure of the conclusion we have just drawn above as to which method is preferred, we can again prepare a table listing the average CPU time needed for a successful run by each method, as it was previously done in Table~\ref{tbl2:quartics} for System~\eqref{sys:quartics}.  Table~\ref{tbl3:Jennrich} lists these times, which immediately reconfirms that the exp-generalized method should indeed be the preferred method over $[-3,3]^2$, as it would take the classical method 37\% more time to find a solution.  Over the larger domain $[-10,10]^2$, by the time the classical method finds a solution the exp-generalized method will have already found about seven solutions---see the framed CPU times.

\begin{table}[H]
    \centering
{\small
\begin{tabular}{c|cc}
 & \multicolumn{2}{c}{Time needed to get a single soln [sec]} \\[1mm]
\cline{2-3} \\[-4mm]
$s_i(x)$ & $[-3,3]^2$ & $[-10,10]^2$ \\[1mm] \hline\\[-4mm]
$x_i$ & $7.1\times10^{-5}$ & $7.5\times10^{-4}$ \\
$x_i^3$ & $2.7\times10^{-4}$ & $3.1\times10^{-3}$ \\ 
$\sinh(x_i)$ & $1.0\times10^{-4}$ & $1.1\times10^{-3}$ \\
$e^{x_i}$ & \framebox{$5.2\times10^{-5}$} & \framebox{$1.1\times10^{-4}$} \\
$\tan{x_i}$ & $1.9\times10^{-4}$ & $1.9\times10^{-4}$ \\[1mm] \hline
\end{tabular}
\caption{\small\sf System~\eqref{sys:Jennrich}: CPU time needed on the average by the classical and generalized Newton methods to obtain a solution in less than 14 iterations, based on the data in Table~\ref{tbl2:Jennrich}.}
\label{tbl3:Jennrich}}
\end{table}

This is yet another example which clearly illustrates how the structure of the problem can be exploited to solve a system of equations by means of a generalized Newton method.

\subsection{Cubic equations in two variables}
\label{ss:2var}

The example we deal with in this section emanates from an unconstrained global optimization problem solved in~\cite{BuraKaya2019}, which asks to minimize the function $\varphi:\dR^2\to\dR$ given as
\begin{equation}\label{eqn:twoVar}
    \varphi(x) = (x_1^2 - 1)^2 + (x_2^2 - 2)^2 - 0.7\,x_1\,x_2 + 0.2\,x_1 + 0.3\,x_2\,.
\end{equation}
Although the numerical optimization method proposed in~\cite{BuraKaya2019} can find the global minimizer of $\varphi$, common numerical optimization approaches often only find a stationary point of the function $\varphi$, by finding a zero of the gradient of $\varphi$, namely, effectively, they find a solution to the system of equations
\begin{equation} \label{sys:cubic}
f(x) := \nabla\varphi(x) = \left[\begin{array}{c}
4\,x_1^3 - 4\,x_1 - 0.7\,x_2 + 0.2 \\[1mm]
4\,x_2^3 - 8\,x_2 - 0.7\,x_1 + 0.3
\end{array} \right] = {\bf 0}\,.
\end{equation}
In~\cite{BuraKaya2019}, five extremal solutions of the function in~\eqref{eqn:twoVar} are listed as in Table~\ref{tbl:twoVarSol}. Solutions 1--4 are all local minima while Solution~5 is a local maximum. 
\begin{table}[H]
    \centering
    {\small
    \begin{tabular}{c|r|r}
        Soln & $x_1$\hspace{16mm} & $x_2$\hspace{16mm} \\ \hline
        $1$ & $-1.128494496205920$ & $-1.477960288994776$ \\
        $2$ &$ 1.088972069871674$ & $1.442265902284124$ \\
        $3$ & $0.79262879889394$ &  $-1.398008585571904$ \\
        $4$ & $-0.888779137505495$ & $1.352613115553849$ \\
        $5$ & $0.044197271093630$ & $0.033651793151170$
    \end{tabular}
    \caption{\small\sf Extremal solutions of $\varphi(x)$ in \eqref{eqn:twoVar}.}
    \label{tbl:twoVarSol}}
\end{table}

The appearance of $x_1^3$ and $x_2^3$ in~\eqref{sys:cubic} prompts us to consider $s(x) = (x_1^3, x_2^3)$ as the first generalized method in Table~\ref{tbl1:cubic}.  Via experiments we observe that the choice $s(x) = (\sinh x_1, \sinh x_2)$ yields another worthwhile generalization of the Newton method.  Table~\ref{tbl1:cubic} reveals that the estimates of $\lambda$ for both of the generalized methods are (by two to four times) smaller at Solutions~1--4, and larger only at Solution~5.

\begin{table}[H]
\centering
{\small
\begin{tabular}{ccccc}
Soln & $s_i(x)$ & $[\|\mu(x^*)\|/2,\ \|\rho(x^*)\|/2]$ & $\lambda$ & $\lambda_N/\lambda_{GN}$ \\[1mm] \hline \\[-4mm]
1 & $x_i$ & $[0.08, 1.5]$ & $1.2$ &  \\
 & $x_i^3$ & $[0.08, 0.44]$ & $0.3$ & $4.0$  \\
 & $\sinh(x_i)$ & $[0.08, 0.96]$ & $0.8$ & $1.5$  \\[1mm] \hline \\[-4mm]
 
2 & $x_i$ & $[0.09, 1.6]$ & $1.0$ &  \\
 & $x_i^3$ & $[0.09, 0.5 ]$ & $0.3$ & $3.3$  \\
 & $\sinh(x_i)$ & $[0.09, 1.1]$ & $0.6$ & $1.7$  \\[1mm] \hline \\[-4mm]
 
3 & $x_i$ & $[0, 2.9]$ & $2.7$ &  \\
 & $x_i^3$ & $[0, 1.5]$ & $1.4$ & $1.9$  \\
 & $\sinh(x_i)$ & $[0, 2.5]$ & $2.4$ & $1.1$  \\[1mm] \hline \\[-4mm]
 
4 & $x_i$ & $[0, 2.3]$ & $1.2$ &  \\
 & $x_i^3$ & $[0, 0.94]$ & $0.4$ & $3.0$  \\
 & $\sinh(x_i)$ & $[0, 1.8]$ & $0.7$ & $1.7$  \\[1mm] \hline \\[-4mm]
 
5 & $x_i$ & $[0, 0.14]$ & $0.05$ &  \\
 & $x_i^3$ & $[0,37]$ & $29.8$ & $0.002$  \\
 & $\sinh(x_i)$ & $[0, 0.17]$ & $0.16$ & $0.3$  \\[1mm] \hline \\[-4mm]
\end{tabular}
\caption{\small\sf System~\eqref{sys:cubic}: Asymptotic error constants of the classical and generalized Newton methods.}
\label{tbl1:cubic}}
\end{table}

To illustrate the overall behaviour, Figure~\ref{fig:cubics} provides a visualization of the success of three methods in terms of the number of iterations.  In addition to the classical method, we consider the cube- and sinh-generalized methods.

\begin{figure}[t!]
\centering
\begin{subfigure}{.32\textwidth}
\centering
\includegraphics[width=\textwidth]{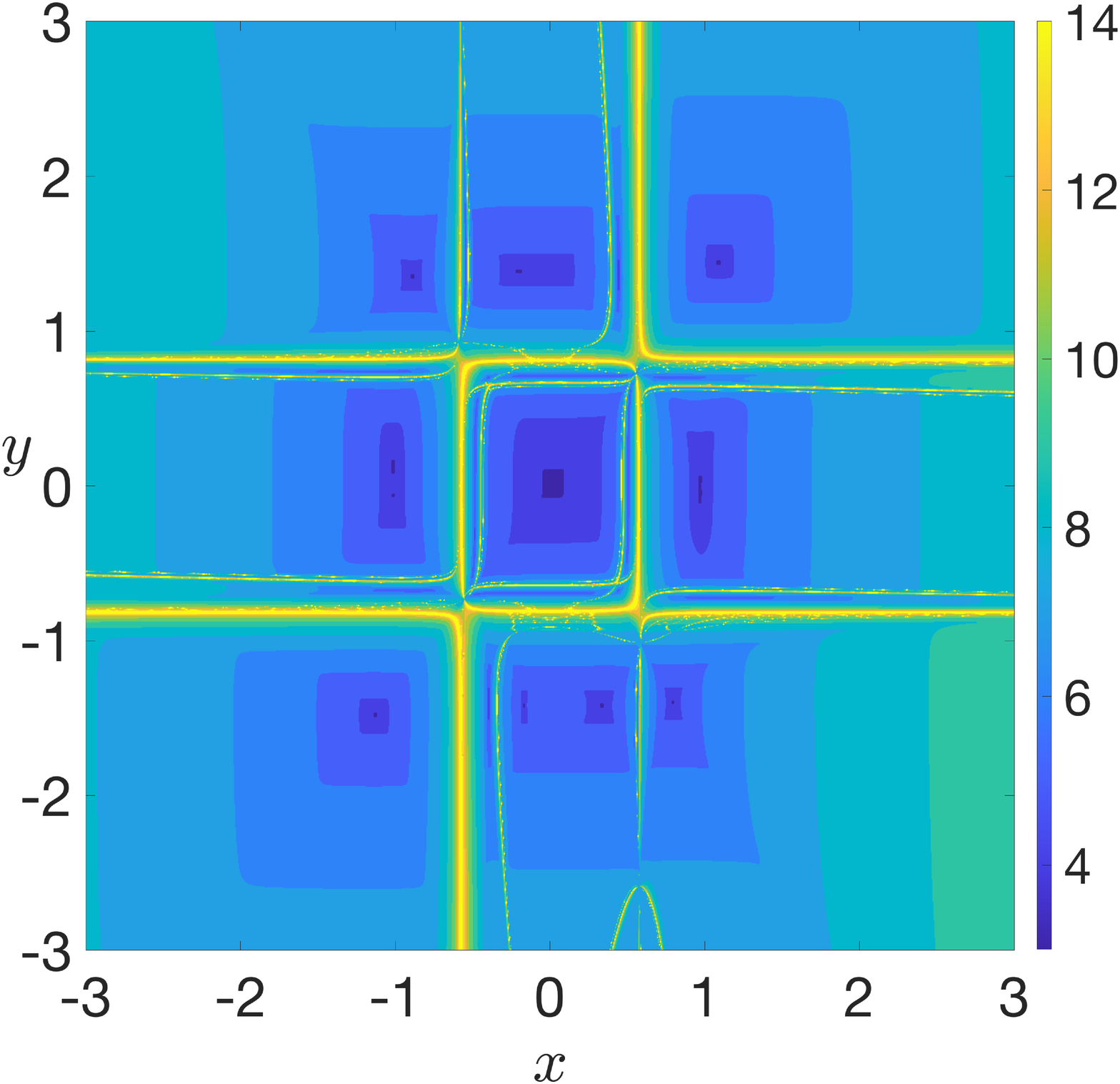}
\end{subfigure}
\begin{subfigure}{.32\textwidth}
\centering
\includegraphics[width=\textwidth]{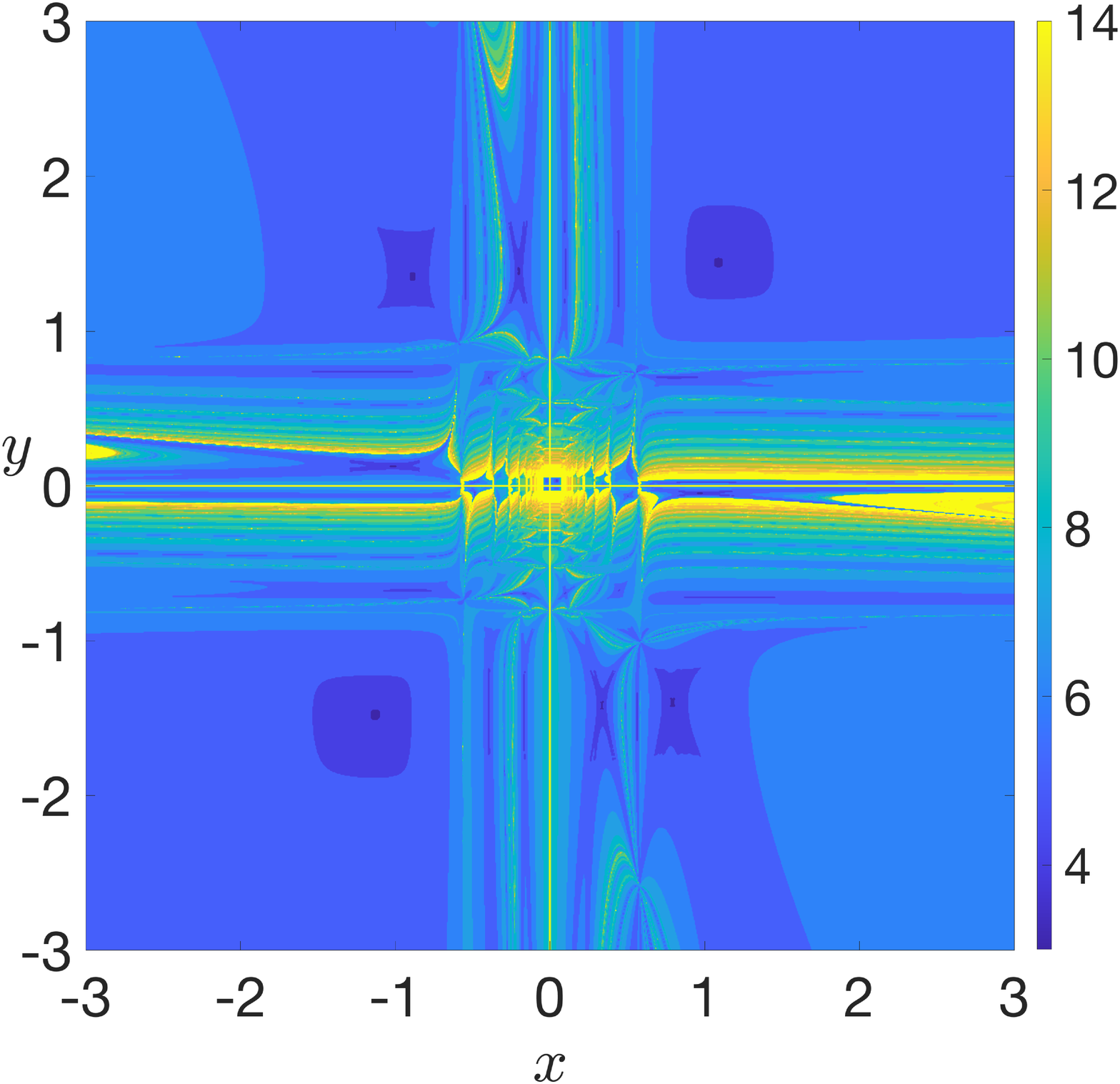}
\end{subfigure}
\begin{subfigure}{.32\textwidth}
\centering
\includegraphics[width=\textwidth]{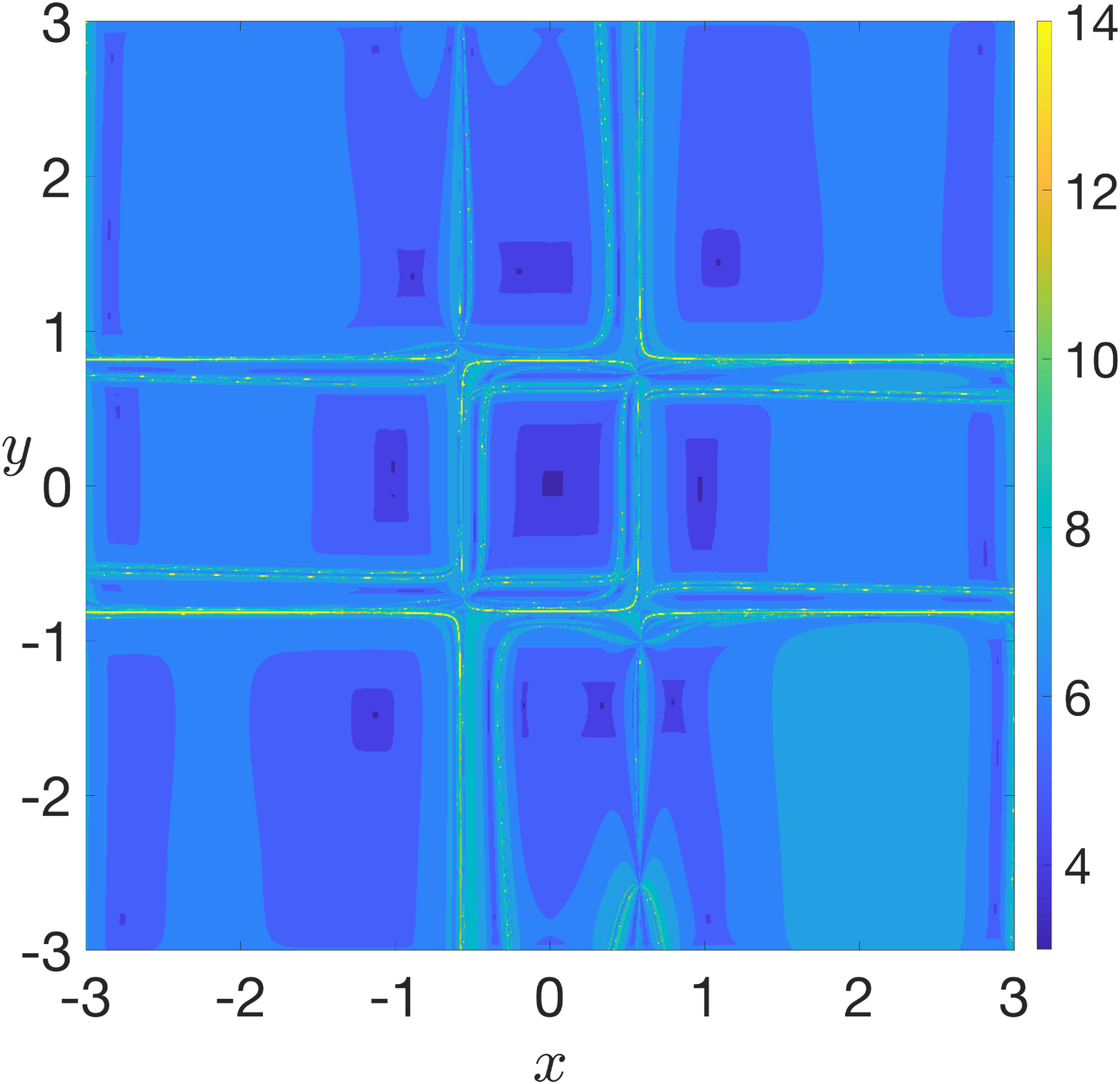}
\end{subfigure}
\begin{subfigure}{.32\textwidth}
\centering
\includegraphics[width=\textwidth]{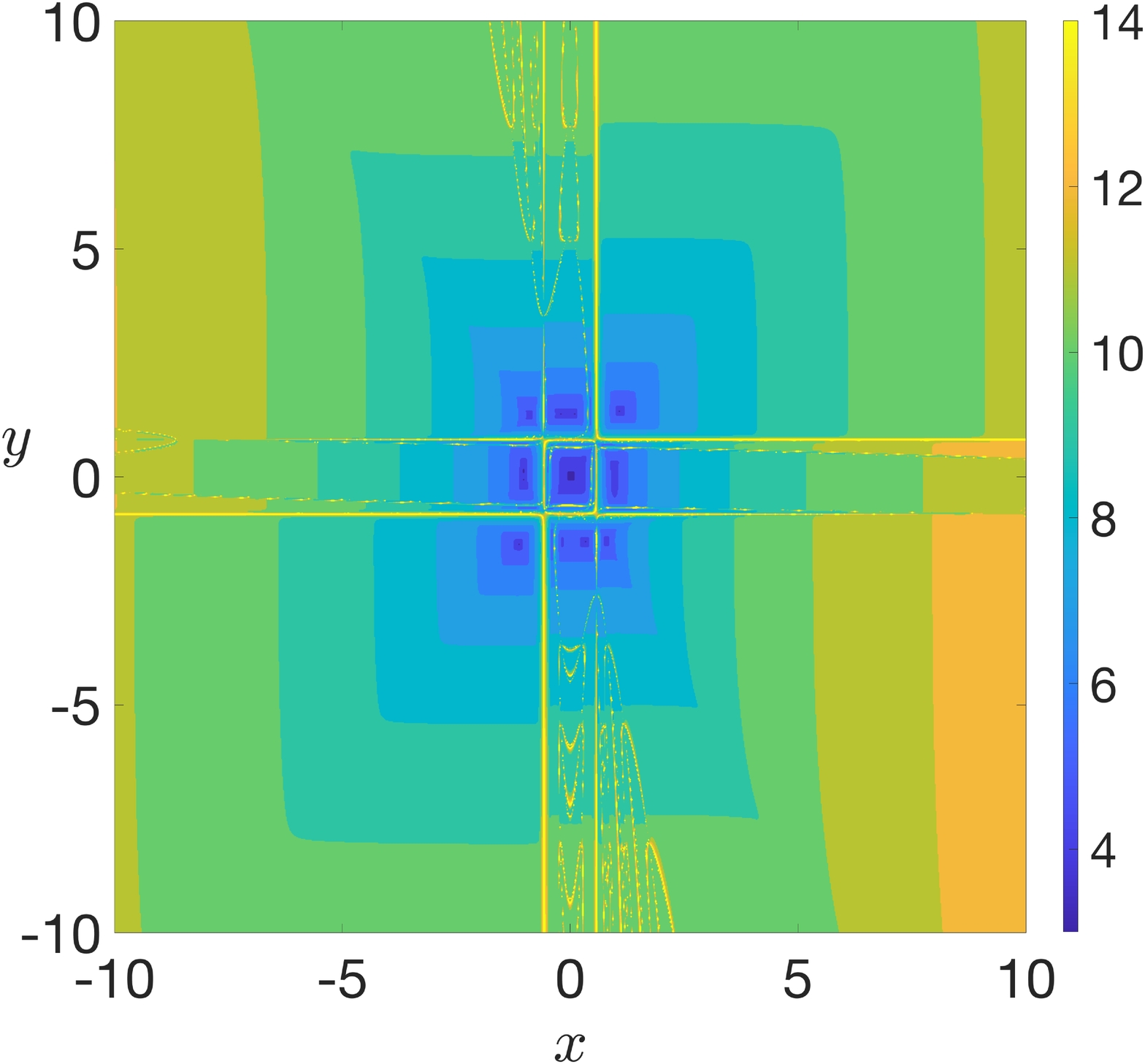}
\end{subfigure}
\begin{subfigure}{.32\textwidth}
\centering
\includegraphics[width=\textwidth]{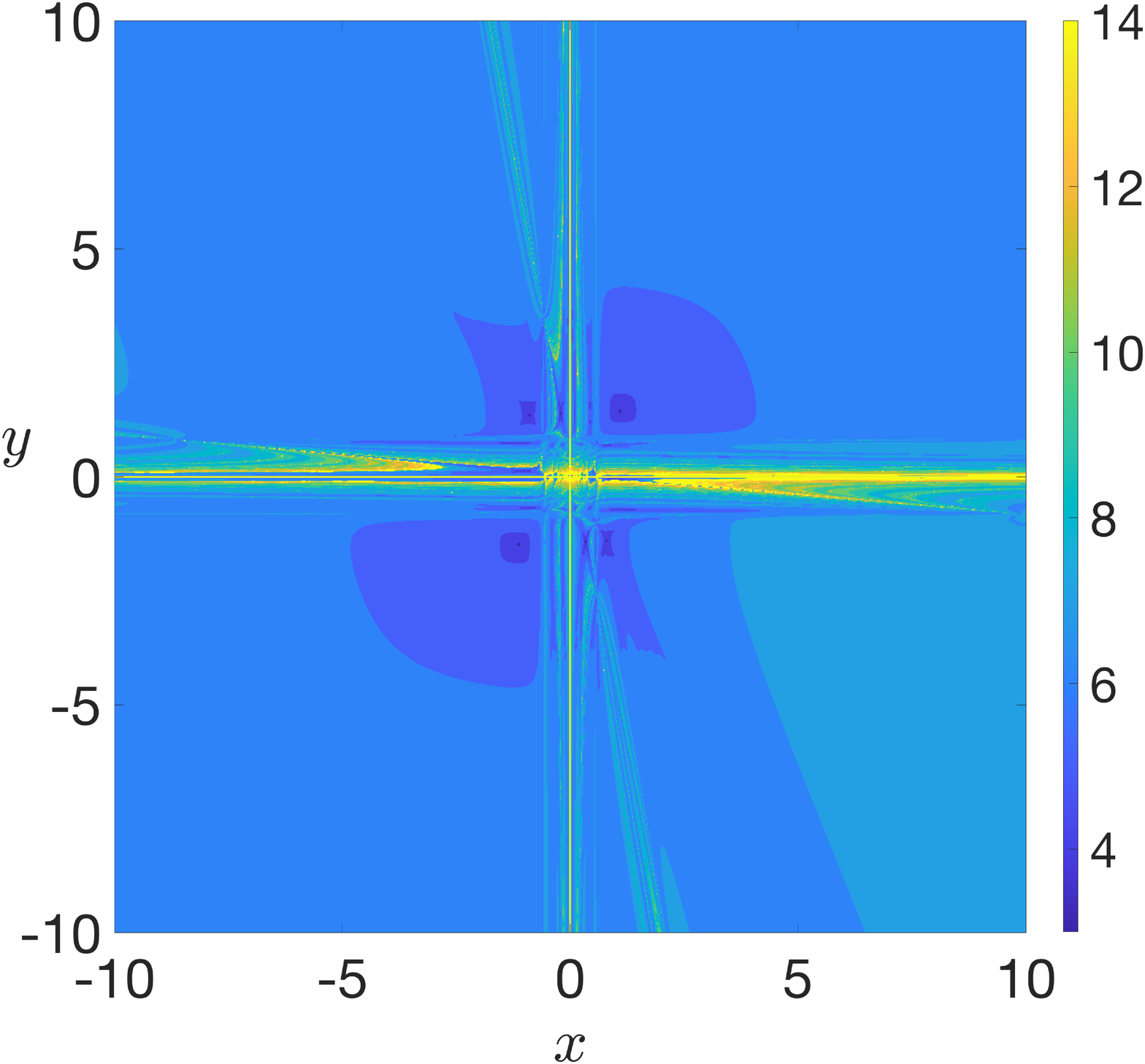}
\end{subfigure}
\begin{subfigure}{.32\textwidth}
\centering
\includegraphics[width=\textwidth]{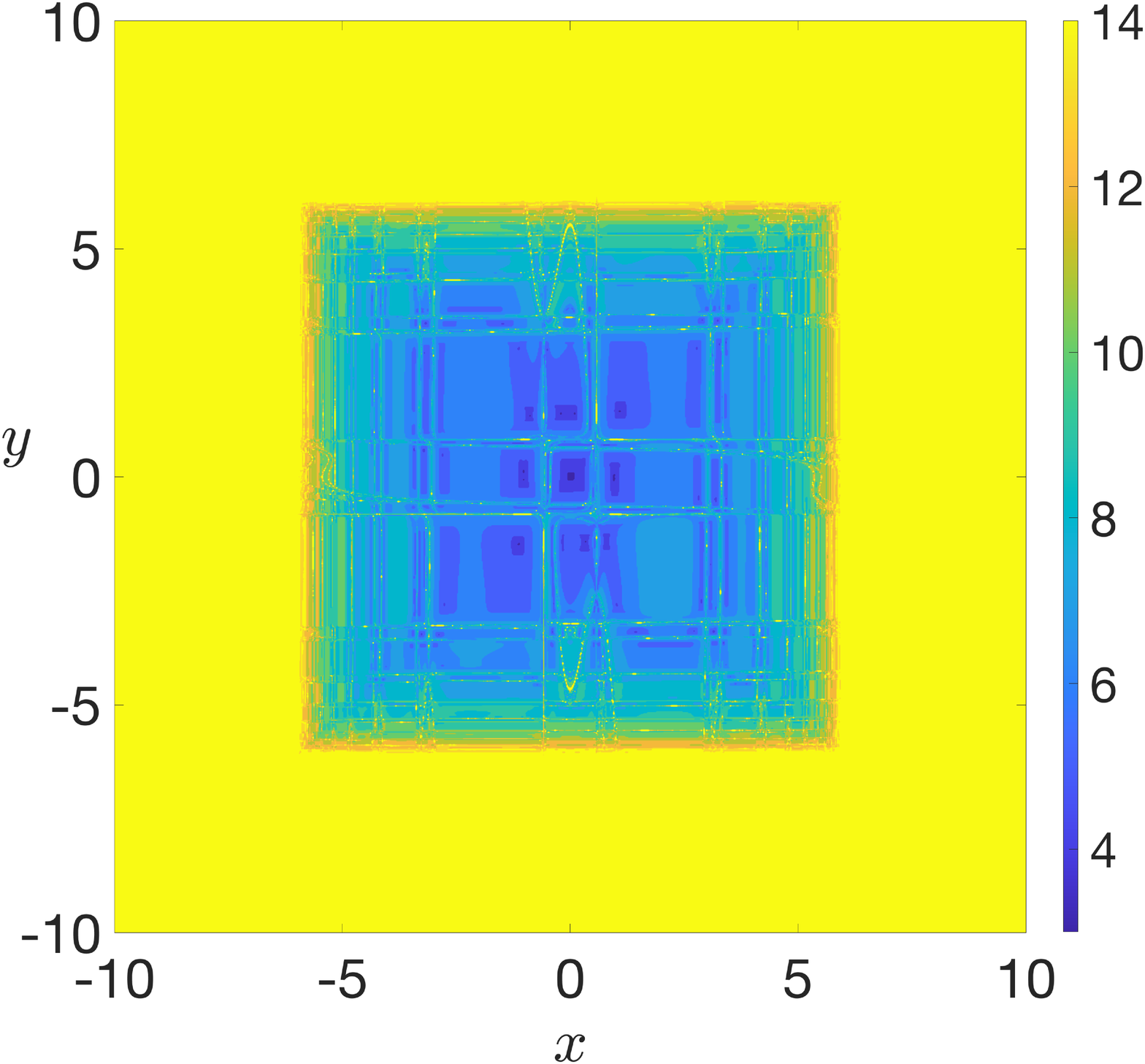}
\end{subfigure}
\begin{subfigure}{.32\textwidth}
\centering
\includegraphics[width=\textwidth]{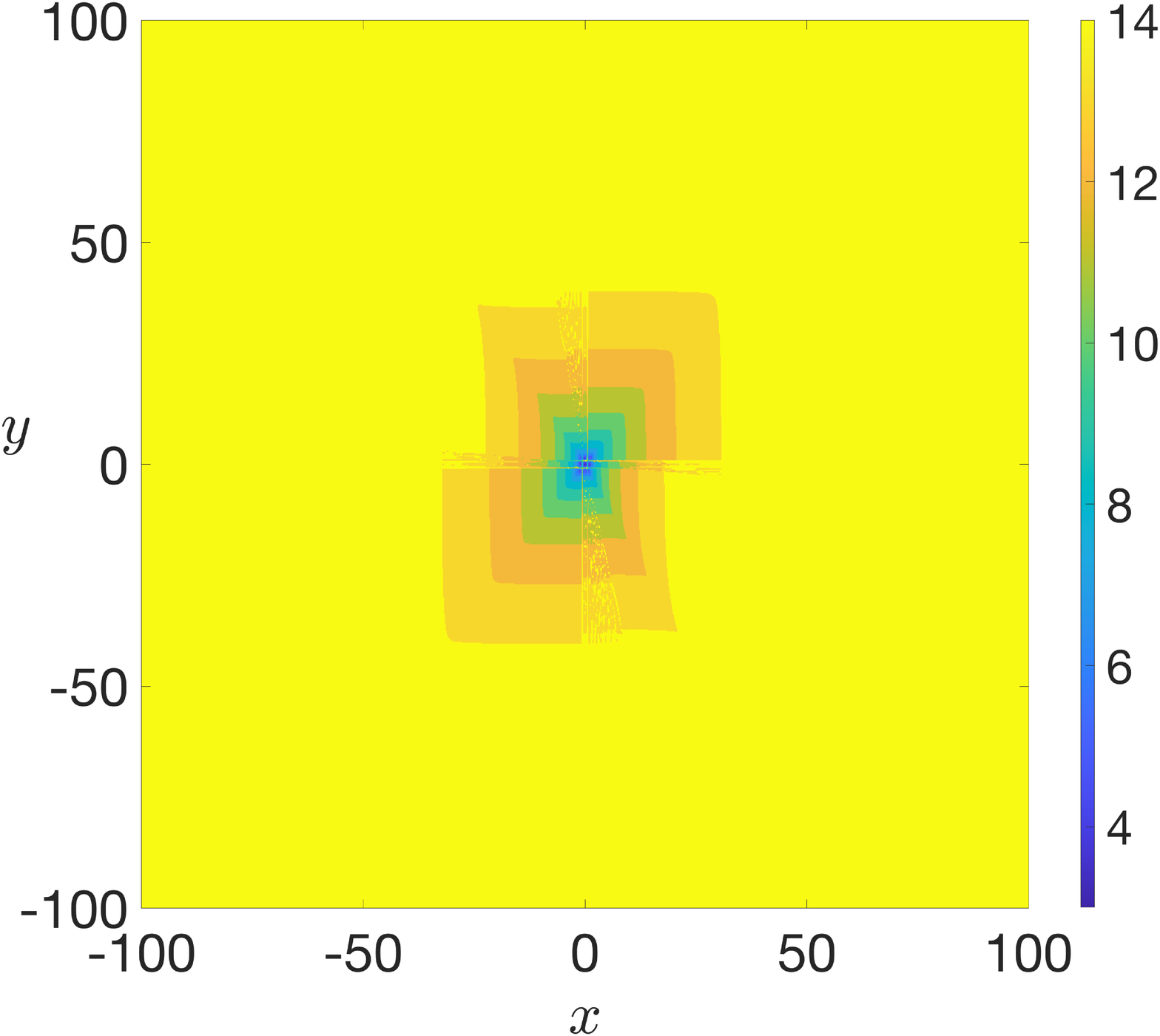}
\caption{$s(x) = (x_1, x_2)$}
\label{fig:cubic_classic}
\end{subfigure}
\begin{subfigure}{.32\textwidth}
\centering
\includegraphics[width=\textwidth]{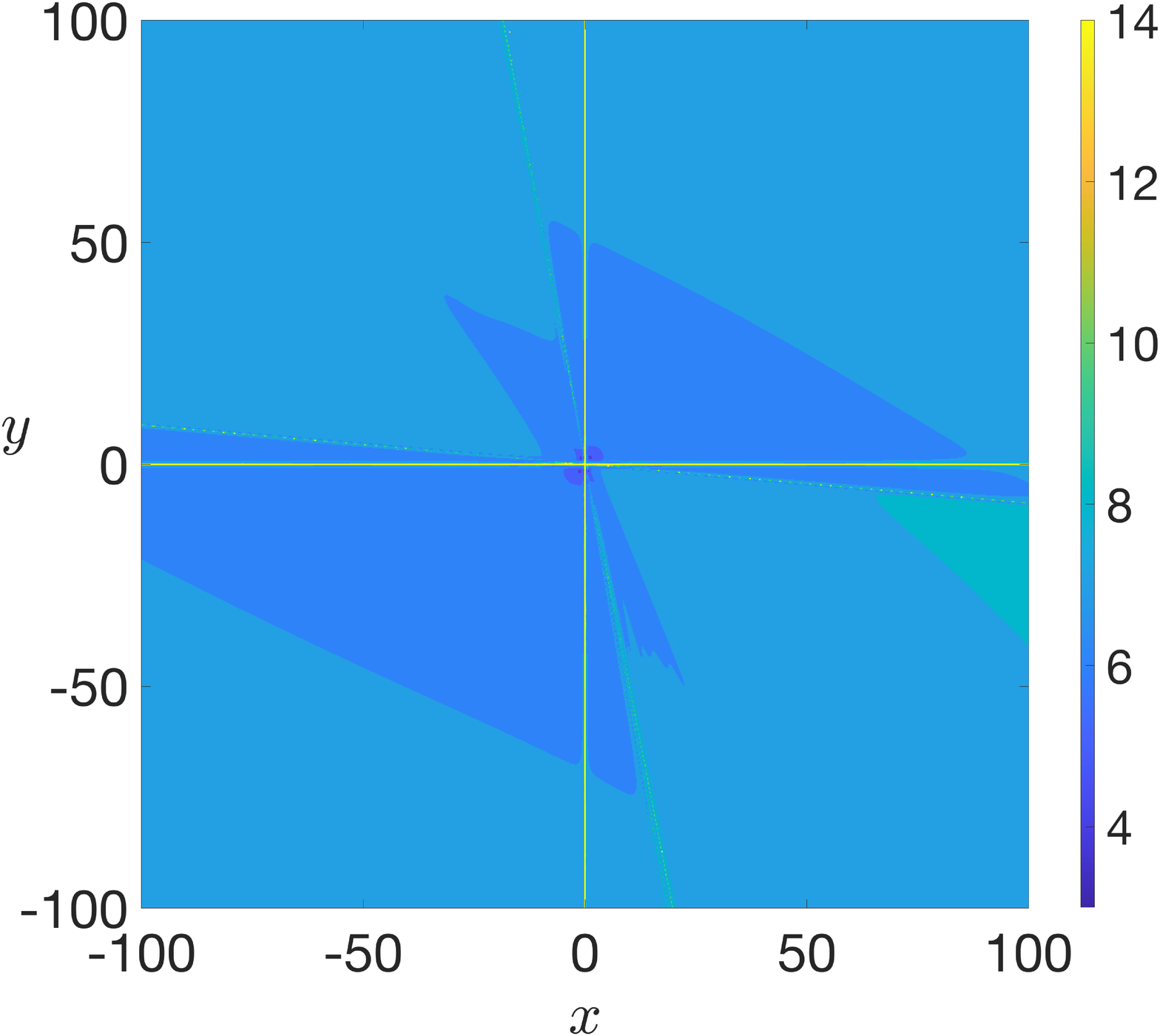}
\caption{$s(x) = (x_1^3, x_2^3)$}
\label{fig:cubic_cube}
\end{subfigure}
\begin{subfigure}{.32\textwidth}
\centering
\includegraphics[width=\textwidth]{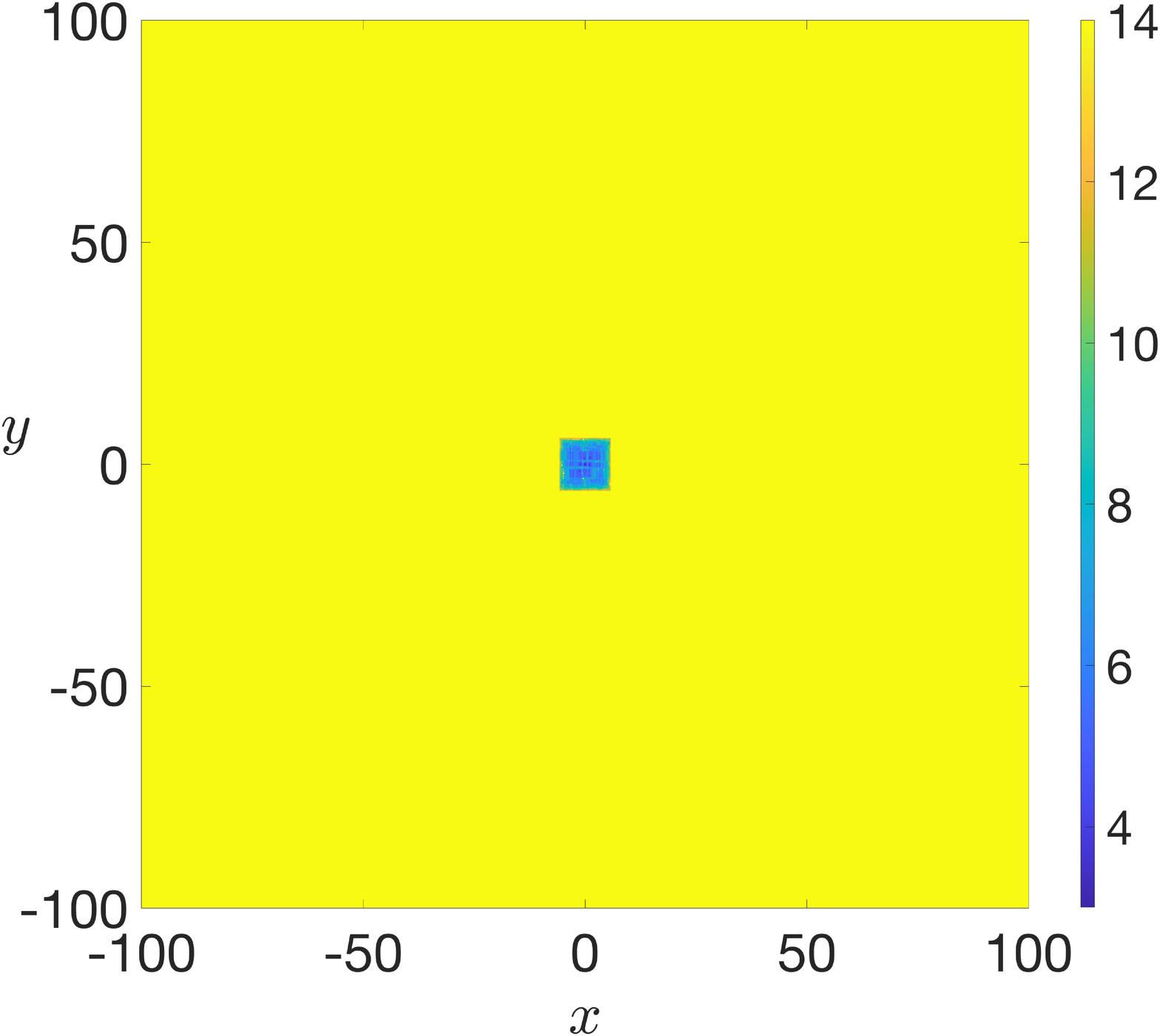}
\caption{$s(x) = (\sinh x_1, \sinh x_2)$}
\label{fig:cubic_sinh}
\end{subfigure}
\caption{\small\sf System~\eqref{sys:cubic}: Portraits of colour-coded number of iterations required for convergence.}
\label{fig:cubics}
\end{figure}

Glancing at the $3\times3$ matrix of graphs of Figure~\ref{fig:cubics}, while the graphs in the entries (2,2) and (3,2) have more of the shades of blue than those in the same rows, the graph in (1,3) appears to have more of the shades of blue and almost no yellow.  The success rates by judging from the non-yellow regions in these graphs are corroborated by the success rates presented in Table~\ref{tbl2:cubic}.  What seem to be the best-performing methods by looking at these graphs are also in agreement with the ones corresponding to the framed entries in Table~\ref{tbl3:cubic}.  

\begin{table}[H]
    \centering
{\small
\begin{tabular}{c|rr|rr|rr|c}
 & \multicolumn{2}{|c|}{$[-3,3]^2$} & \multicolumn{2}{c|}{$[-10,10]^2$} & \multicolumn{2}{c|}{$[-100,100]^2$} & CPU time/ \\[0.5mm]
\cline{2-3} \cline{4-5} \cline{6-7}
 & Ave & Success & Ave & Success & Ave & Success & successful \\
$s_i(x)$ & iter & rate [\%] & iter & rate [\%] & iter & rate [\%] & iter [sec] \\[1mm] \hline\\[-4mm]
$x_i$ & 7.0 & 98.6 & 9.7 & 99.3 & 12.2 & 9.8 & $3.7\times10^{-6}$ \\
$x_i^3$ & 6.1 & 98.6 & 6.3 & 99.7 & 6.8 & 100.0 & $6.1\times10^{-6}$ \\
$\sinh(x_i)$ & 5.9 & 99.8 & 7.9 & 34.8 & 7.8 & 0.3 & $4.1\times10^{-6}$ \\
$e^{x_i}$ & 7.1 & 98.7 & 10.4 & 42.4 & 10.4 & 0.4 & $4.7\times10^{-6}$ \\
$\tan{x_i}$ & 6.7 & 70.7 & 7.3 & 57.5 & 7.8 & 3.3 & $4.0\times10^{-6}$ \\[1mm] \hline
\end{tabular}
\caption{\small\sf System~\eqref{sys:cubic}: Performance of the classical and generalized Newton methods with one million randomly generated starting points in domains of various sizes.}
\label{tbl2:cubic}}
\end{table}

By looking at Table~\ref{tbl3:cubic}, we deduce easily that the $\sinh$-generalized method is the best, although the classical method is only slightly worse, in terms of the time they take for a successful run in the domain $[-3,3]^2$. The classical Newton is the best for $[-10,10]^2$, with this time the cubic-generalized method being slightly worse, taking 8\% longer time in finding a solution. In the largest domain $[-100,100]^2$,  the cube-generalized method is by far the best, as its nearest contender, the classical Newton method, takes more than 11 times longer to obtain a single solution.

\begin{table}[H]
    \centering
{\small
\begin{tabular}{c|ccc}
 & \multicolumn{3}{c}{Time needed to get a single soln [sec]} \\[1mm]
\cline{2-4} \\[-4mm]
$s_i(x)$ & $[-3,3]^2$ & $[-10,10]^2$ & $[-100,100]^2$ \\[1mm] \hline\\[-4mm]
$x_i$ & $2.6\times10^{-5}$ & \framebox{$3.6\times10^{-5}$} & $4.6\times10^{-4}$ \\
$x_i^3$ & $3.8\times10^{-5}$ & $3.9\times10^{-5}$ & \framebox{$4.1\times10^{-5}$} \\ 
$\sinh(x_i)$ & \framebox{$2.4\times10^{-5}$} & $9.3\times10^{-5}$ & $1.1\times10^{-2}$ \\
$e^{x_i}$ & $3.4\times10^{-5}$ & $1.2\times10^{-4}$ & $1.2\times10^{-2}$ \\
$\tan{x_i}$ & $3.8\times10^{-5}$ & $5.1\times10^{-5}$ & $9.5\times10^{-4}$ \\[1mm] \hline
\end{tabular}
\caption{\small\sf System~\eqref{sys:cubic}: CPU time needed on the average by the classical and generalized Newton methods to obtain a solution in less than 14 iterations, based on the data in Table~\ref{tbl2:cubic}.}
\label{tbl3:cubic}}
\end{table}

Going back to Figure~\ref{fig:cubics}, we deduce from the first row of graphs that the regions of convergence in 4--6 iterations of the classical method are considerably enlarged by both of the generalized methods.  This is in agreement with the estimated values of $\lambda_N/\lambda_{GN}$ in Table~\ref{tbl1:cubic}.

\subsection{Cubic equations in six variables}
\label{ss:6var}

We consider another system of cubic equations, but this time the number of equations and unknowns is six.  The system originates from the problem of (globally) minimizing the function $\varphi:\dR^6\to\dR$, which was studied in \cite{BuraKaya2019, QiWanYang2004}, given by
\begin{equation}\label{eqn:sixVar}
    \varphi(x) = \sum_{i=1}^6a_ix_i^4+x^TB\,x+d^Tx\,,
\end{equation}
where
\[
a =\begin{bmatrix} 9 \\ 2 \\ 6 \\ 4 \\ 8 \\ 7 \end{bmatrix},\quad 
B = \begin{bmatrix} 4 & 4 & 9 & 3 & 4 & 1 \\ 4 & 3 & 7 & 9 & 9 & 2 \\ 9 & 7 & 4 & 7 & 6 & 6 \\ 3 & 9 & 7 & 4 & 2 & 6 \\ 4 & 9 & 6 & 2 & 8 & 3 \\ 1 & 2 & 6 & 6 & 3 & 5 \end{bmatrix},\quad 
d = \begin{bmatrix} 2 \\ 6 \\ 5 \\ 0 \\ 0 \\ 2 \end{bmatrix}.
\]
We consider the problem of finding the zeroes of the gradient $\nabla\varphi(x)$ of $\varphi(x)$, in other words, the zeroes of
\begin{equation} \label{sys:6var}
f(x) := \nabla\varphi(x) = 4\begin{bmatrix} a_1\,x_1^3 \\ a_2\,x_2^3 \\ \vdots \\ a_6\,x_6^3 \end{bmatrix} + 2\,B\,x + d = {\bf 0}\,.
\end{equation}
Solutions of \eqref{sys:6var} are stationary points of $\varphi$, in other words, they are candidates for (locally) optimal solutions of $\varphi$, three of which are listed in Table~\ref{tbl:sixVarSol}.  The first solution listed in Table~\ref{tbl:sixVarSol} is a global minimizer of $\varphi$, as reported in~\cite{BuraKaya2019}.  Our aim here is to look at the behaviour of the classical and generalized methods in finding a zero of $f$, which is only a stationary point of $\varphi$.

\begin{table}[H]
    \centering
    {\small
    \begin{tabular}{r|r|r|r}
              & Soln 1\hspace*{12mm} & Soln 2\hspace*{12mm} & Soln 3\hspace*{12mm} \\ \hline
        $x_1$ & $0.545218813388361$ & $-0.599208065573669$ & $0.590580847289543$ \\
        $x_2$ & $-1.464410189791729$ & $-1.571013884485518$ & $1.338889774602320$ \\
        $x_3$ & $-0.720606654276266$ & $0.678323332400517$ & $-0.853265510869097$ \\
        $x_4$ & $1.178144265591973$ & $1.076080413893220$ & $-0.955745102979906$ \\
        $x_5$ & $0.794065108243717$ & $0.745744375791400$ & $-0.646924271685709$ \\
        $x_6$ & $-0.465794119447879$ & $-0.762615830412707$ & $0.708688334528434$
    \end{tabular}
    \caption{Some of the stationary points of $\varphi$ in~\eqref{eqn:sixVar}.}
    \label{tbl:sixVarSol}}
\end{table}

First we look at the (local) behaviour near the solutions listed in Table~\ref{tbl:sixVarSol}.  Table~\ref{tbl1:6var} tabulates the theoretical intervals where $\lambda$ lies, found using Theorem~\ref{thm:lambdaBounds}, as well as the $\lambda$ estimated numerically, for each method.  We observe that the numerical estimates fall into the theoretical intervals.  We also observe that the cube-generalized method has $\lambda$ consistently 2 to 3.5 times smaller than that of the classical method, and therefore locally faster by the same factors. The sinh-generalized method, on the other hand, is observed to be not so fast.  The reason we have included the sinh-generalized method here is that as we will see in Tables~\ref{tbl2:6var} and \ref{tbl3:6var} it can have a desirable performance on a larger scale, in search domains of moderate size.

\begin{table}[H]
\centering
{\small
\begin{tabular}{ccccc}
Soln & $s_i(x)$ & $[\|\mu(x^*)\|/2,\ \|\rho(x^*)\|/2]$ & $\lambda$ & $\lambda_N/\lambda_{GN}$ \\[1mm] \hline \\[-4mm]
1 & $x_i$ & $[0,3.4]$ & $0.7$ &  \\
 & $x_i^3$ & $[0,1.2]$ & $0.2$ & $3.5$ \\
 & $\sinh x_i$ & $[0,2.7]$ & $1.3$ & $0.5$ \\[1mm] \hline \\[-4mm]
 
2 & $x_i$ & $[0,3.1]$ & $0.8$ &  \\
 & $x_i^3$ & $[0,1.0]$ & $0.4$ & $2$ \\
 & $\sinh x_i$ & $[0,2.4]$ & $0.5$ & $1.6$ \\[1mm] \hline \\[-4mm]
 
3 & $x_i$ & $[0,3.1]$ & $0.9$ &  \\
 & $x_i^3$ & $[0,0.9]$ & $0.4$ & $2.3$ \\
 & $\sinh x_i$ & $[0,2.3]$ & $0.8$ & $1.1$ \\[1mm] \hline \\[-4mm]
\end{tabular}
\caption{\small\sf System~\eqref{sys:6var}: Asymptotic error constants of the classical and generalized Newton methods.}
\label{tbl1:6var}}
\end{table}

Since System~\eqref{sys:6var} has six variables, we cannot have the kind of visualization of performance as we had in the previous (two-variable) examples.  However, we can still carry out runs with randomized (one million) initial points and make some statistical observations as we did for the previous example systems. Table~\ref{tbl2:6var}, and subsequently Table~\ref{tbl3:6var}, provide advice as to which method can be chosen for efficiency.

\begin{table}[H]
    \centering
{\small
\begin{tabular}{c|rr|rr|rr|c}
 & \multicolumn{2}{|c|}{$[-3,3]^2$} & \multicolumn{2}{c|}{$[-10,10]^2$} & \multicolumn{2}{c|}{$[-100,100]^2$} & CPU time/ \\[0.5mm]
\cline{2-3} \cline{4-5} \cline{6-7}
 & Ave & Success & Ave & Success & Ave & Success & successful \\
$s_i(x)$ & iter & rate [\%] & iter & rate [\%] & iter & rate [\%] & iter [sec] \\[1mm] \hline\\[-4mm]
$x_i$ & 10.5 & 58.8 & 11.9 & 41.2 & $-$ & 0.0 & $6.3\times10^{-6}$ \\  
$x_i^3$ & 8.0 & 76.7 & 8.5 & 48.9 & 8.8 & 17.7 & $9.5\times10^{-6}$ \\ 
$\sinh(x_i)$ & 8.9 & 74.9 & 11.1 & 17.4 & $-$ & 0.0 & $6.9\times10^{-6}$ \\
$e^{x_i}$ & 10.8 & 62.4 & 12.3 & 2.2 & $-$ & 0.0 & $1.0\times10^{-5}$ \\
$\tan{x_i}$ & 9.2 & 3.2 & 9.8 & 0.6 & $-$ & 0.0 & $7.3\times10^{-6}$ \\[1mm] \hline
\end{tabular}
\caption{\small\sf System~\eqref{sys:6var}: Performance of the classical and generalized Newton methods with one million randomly generated starting points in domains of various sizes.}
\label{tbl2:6var}}
\end{table}

The framed average CPU times required to get a single solution in Table~\ref{tbl3:6var} indicate that the sinh-generalized method should be chosen in the domain $[-3,3]^2$, while the cube-generalized method should be preferred in the larger domains. In $[-3,3]^2$, compared to the sinh-generalized method, the cube-generalized method takes about 21\% more time to find a solution, while the classical method requires 34\% more time.  In the search domain $[-100,100]^2$, the cube-generalized method is unrivalled as none of the other methods is viable to use.  We note that in the largest search domain since the other methods has a success rate less than $0.04\%$, their success rates have been entered as $0.0\%$ into Table~\ref{tbl2:6var}, with no average number of iterations reported.

\begin{table}[H]
    \centering
{\small
\begin{tabular}{c|ccc}
 & \multicolumn{3}{c}{Time needed to get a single soln [sec]} \\[1mm]
\cline{2-4} \\[-4mm]
$s_i(x)$ & $[-3,3]^2$ & $[-10,10]^2$ & $[-100,100]^2$ \\[1mm] \hline\\[-4mm]
$x_i$ & $1.1\times10^{-4}$ & $1.8\times10^{-4}$ & $-$ \\
$x_i^3$ & $9.9\times10^{-5}$ & \framebox{$1.7\times10^{-4}$} & \framebox{$4.7\times10^{-4}$} \\ 
$\sinh(x_i)$ & \framebox{$8.2\times10^{-5}$} & $4.4\times10^{-4}$ & $-$ \\
$e^{x_i}$ & $1.7\times10^{-4}$ & $5.6\times10^{-3}$ & $-$ \\
$\tan{x_i}$ & $2.1\times10^{-3}$ & $1.2\times10^{-2}$ & $-$ \\[1mm] \hline
\end{tabular}
\caption{\small\sf System~\eqref{sys:6var}: CPU time needed on the average by the classical and generalized Newton methods to obtain a solution in less than 14 iterations, based on the data in Table~\ref{tbl2:6var}.}
\label{tbl3:6var}}
\end{table}

\subsection{A signal processing problem}
\label{ss:SPE}

In optimum broad-band antenna processing the minimization of the mean output power subject to linear constraints is a common problem \cite{TCL1993,TCL1996}. In \cite{TCL1996}, a 70-tuple example from \cite{TCL1993} about this signal processing problem has been transformed into the global minimization of the quartic polynomial function $\varphi:\dR^2\to\dR$ given in equation \eqref{eqn:otherTwoVar} below. The details of this transformation can be found in \cite[Appendix~C]{TCL1996}.

\begin{eqnarray}
\varphi(x) &=& a_1 - a_2\,x_1^2 + a_3\,x_1^4 - a_4\,x_1\,x_2 + a_5\,x_1^3\,x_2 - a_6\,x_2^2 + a_7x_1^2\,x_2^2 \nonumber \\[1mm]
&&\ \ \ \, +\ a_8\,x_1\,x_2^3 + a_9\,x_2^4\,, \label{eqn:otherTwoVar}
\end{eqnarray}
where
{\small
\[
\begin{array}{ll}
 a_1 = 0.337280011659804177\,,  &\qquad a_2 = 0.122071359035091510\,, \\[1mm]
 a_3 = 0.077257128600040819\,,  &\qquad a_4 = 0.217646697603541049\,, \\[1mm]
 a_5 = 0.233083387816363887\,,  &\qquad a_6 = 0.129244611969892874\,, \\[1mm]
 a_7 = 0.286227131697582205\,,  &\qquad a_8 = 0.1755719525003619673\,, \\[1mm]
 a_9 = 0.0567691913792773433\,.  & 
\end{array}
\]}
Here we are interested in the problem of finding a stationary point of $\varphi(x)$, namely a zero of
\begin{equation} \label{sys:sigprog}
f(x) := \nabla\varphi(x) = \left[\begin{array}{c}
- 2\,a_2\,x_1 + 4\,a_3\,x_1^3 - a_4\,x_2 + 3\,a_5\,x_1^2\,x_2 + 2\,a_7x_1\,x_2^2 + a_8\,x_2^3 \\[1mm]
a_4\,x_1 + a_5\,x_1^3 - 2\,a_6\,x_2 + 2\,a_7\,x_1^2\,x_2 + 3\,a_8\,x_1\,x_2^2 + 4\,a_9\,x_2^3
\end{array} \right] = {\bf 0}\,.
\end{equation}

The extremal points of $\varphi$, which are zeroes $f$, and the corresponding functional values are given in Table \ref{tbl:otherTwoVarSol} (also see \cite{TCL1996}).  Solutions 3 and 4 are the global minimizers of $\varphi$.

\begin{table}[H]
    \centering
    {\small
    \begin{tabular}{c|r|r|r}
        Soln & $x_1$\hspace*{15mm} & $x_2$\hspace*{15mm} & $\varphi(x)$\hspace*{12mm} \\ \hline
        1 & $-1.037925846421872$ & $1.188144940421522$ & $0.314501964946967$ \\
        2 & $1.037925846421872$ & $-1.188144940421522$ & $0.314501964946967$ \\
        3 & $-0.150370553810688$ &  $-0.948134491036906$ & $0.262292001977528$ \\
        4 & $0.150370553810688$ & $0.948134491036906$ & $0.262292001977528$\\
        5 & $0$ & $0$ & $0.337280011659804$
    \end{tabular}
    \caption{\small\sf Extremal points for the function \eqref{eqn:otherTwoVar}.}
    \label{tbl:otherTwoVarSol}}
\end{table}

Table~\ref{tbl1:sigproc} reconfirms that, for Solutions~1--4, convergence is quadratic and the numerically estimated values of $\lambda$ lie in the theoretical intervals found by using Theorem~\ref{thm:lambdaBounds}.  We observe that the sinh-generalized method has $\lambda$ consistently 1.2 to 2.7 times smaller than that of the classical method, and therefore locally faster by the same factors.

\begin{table}[H]
\centering
{\small
\begin{tabular}{ccccc}
Soln & $s_i(x)$ & $[\|\mu(x^*)\|/2,\ \|\rho(x^*)\|/2]$ & $\lambda$ & $\lambda_N/\lambda_{GN}$ \\[1mm] \hline \\[-4mm]
1 & $x_i$ & $[0.29,2.6]$ & $1.6$ &  \\
 & $x_i^3$ & $[0.14,2.2]$ & $1.7$ & $0.9$  \\
 & $\sinh x_i$ & $[0.24,2.4]$ & $0.6$ & $2.7$  \\
 & $e^{x_i}$ & $[0.23,2.3]$ & $0.4$ & $4$  \\[1mm] \hline \\[-4mm]
 
2 & $x_i$ & $[0.29,2.6]$ & $1.0$ &  \\
 & $x_i^3$ & $[0.14,2.2]$ & $1.5$ & $0.7$  \\
 & $\sinh x_i$ & $[0.24,2.4]$ & $0.6$ & $1.7$  \\
 & $e^{x_i}$ & $[0.34,3.0]$ & $1.5$ & $0.7$  \\[1mm] \hline \\[-4mm]
 
3 & $x_i$ & $[0,3.0]$ & $1.3$ &  \\
 & $x_i^3$ & $[0,4.9]$ & $4.5$ & $0.3$   \\
 & $\sinh x_i$ & $[0,2.8]$ & $1.1$ & $1.2$  \\
 & $e^{x_i}$ & $[0,3.7]$ & $2.0$ & $0.7$  \\[1mm] \hline \\[-4mm]
 
4 & $x_i$ & $[0,3.0]$ & $1.3$ &  \\
 & $x_i^3$ & $[0,4.9]$ & $0.4$ & $3.3$  \\
 & $\sinh x_i$ & $[0,2.8]$ & $1.1$ & $1.2$  \\
 & $e^{x_i}$ & $[0,2.5]$ & $1.0$ & $1.3$  \\[1mm] \hline \\[-4mm]
 
5 & $x_i$ & $-$ & $0.0$ &  \\
 & $x_i^3$ & $-$ & $-$ & $-$  \\
 & $\sinh x_i$ & $-$ & $0.0$ & $-$  \\
 & $e^{x_i}$ & $[0,0.71]$ & $0.5$ & $0.0$  \\[1mm] \hline \\[-4mm]
 
\end{tabular}
\caption{\small\sf System~\eqref{sys:sigprog}: Asymptotic error constants of the classical and generalized Newton methods.}
\label{tbl1:sigproc}}
\end{table}

The missing entries for Solution~5 in Table~\ref{tbl1:sigproc} warrants an explanation. Numerical experiments imply that the classical and sinh-generalized Newton methods' rate of convergence at $x = (0,0)$ is higher than quadratic, since the asymptotic error constants $\lambda$ of each method for quadratic convergence is estimated to be zero. In fact, interestingly, the rate of convergence for either method (numerically) turns out to be cubic, with the associated asymptotic error constants estimated as 1.0 and 1.6, respectively.  The exp-generalized method is the only method in the list which is verified to be quadratically convergent at Solution~5.  On the other hand, the cube-generalized method has a singularity at $x = (0, 0)$, and it fails to converge to the solution, no matter how close to the solution the initial guess is chosen.

A visualization of the global performances of the methods listed in Table~\ref{tbl1:sigproc} is provided in Figure~\ref{fig:sigproc}.  By looking at the graphs, the sinh-generalized method appears to be the best to use in the search domain $[-3,3]^2$ since it has smaller yellow regions and the domain is dominated more by shades of blue, a sign of quicker convergence. In the slightly bigger search domain of $[-10,10]^2$, however, while the cube-generalized method seems to have the largest regions of shades of blue, the classical method looks to have the smallest regions of yellow.  In view of the difficulty of judging from Figure~\ref{fig:sigproc} as to which method is preferable, we will resort to the statistical data presented in Tables~\ref{tbl2:sigproc} and \ref{tbl3:sigproc} for a more conclusive decision. 

\begin{landscape}
\begin{figure}
\centering
\begin{subfigure}{.36\textwidth}
\centering
\includegraphics[width=\textwidth]{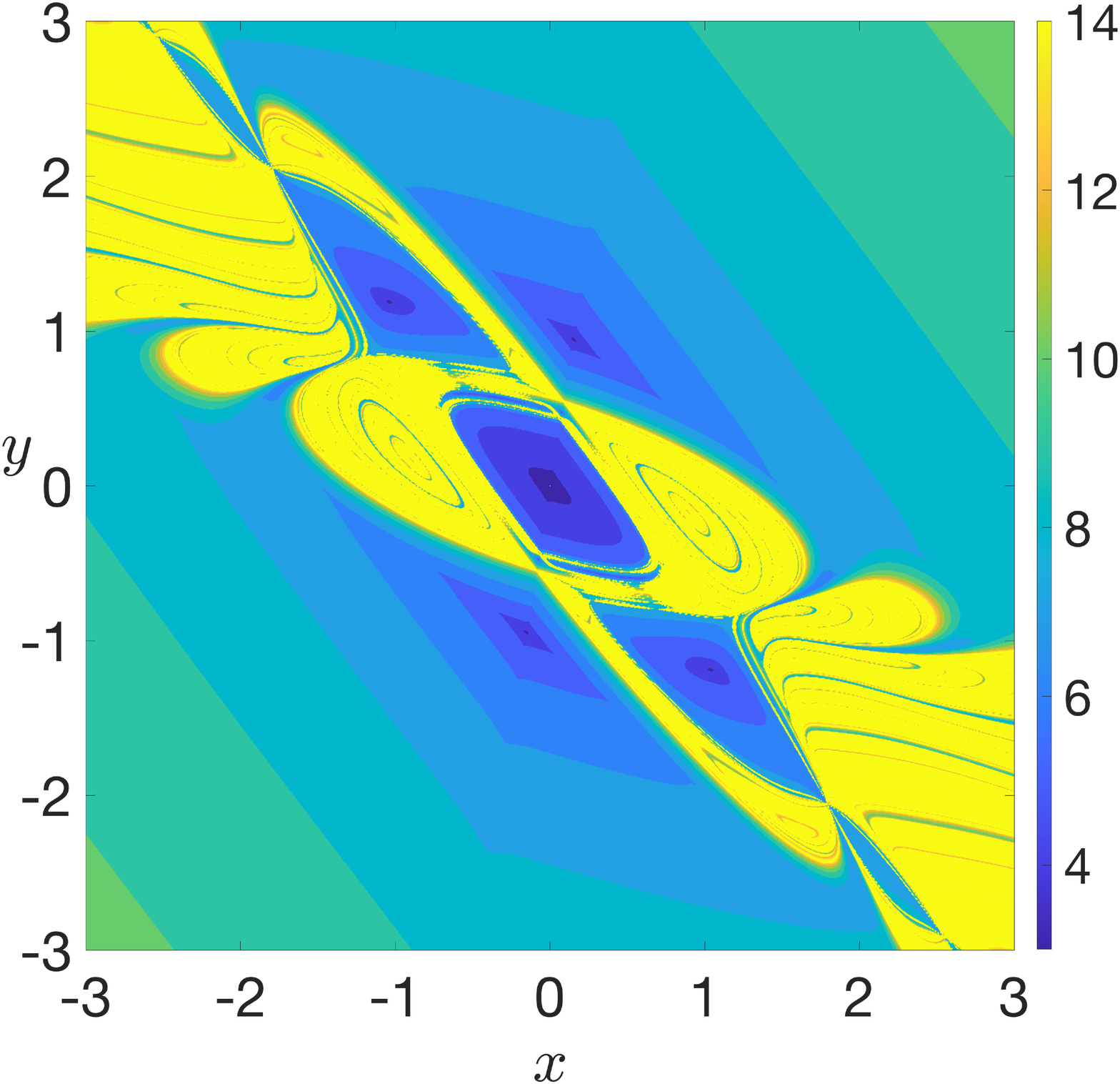}
\end{subfigure}
\begin{subfigure}{.36\textwidth}
\centering
\includegraphics[width=\textwidth]{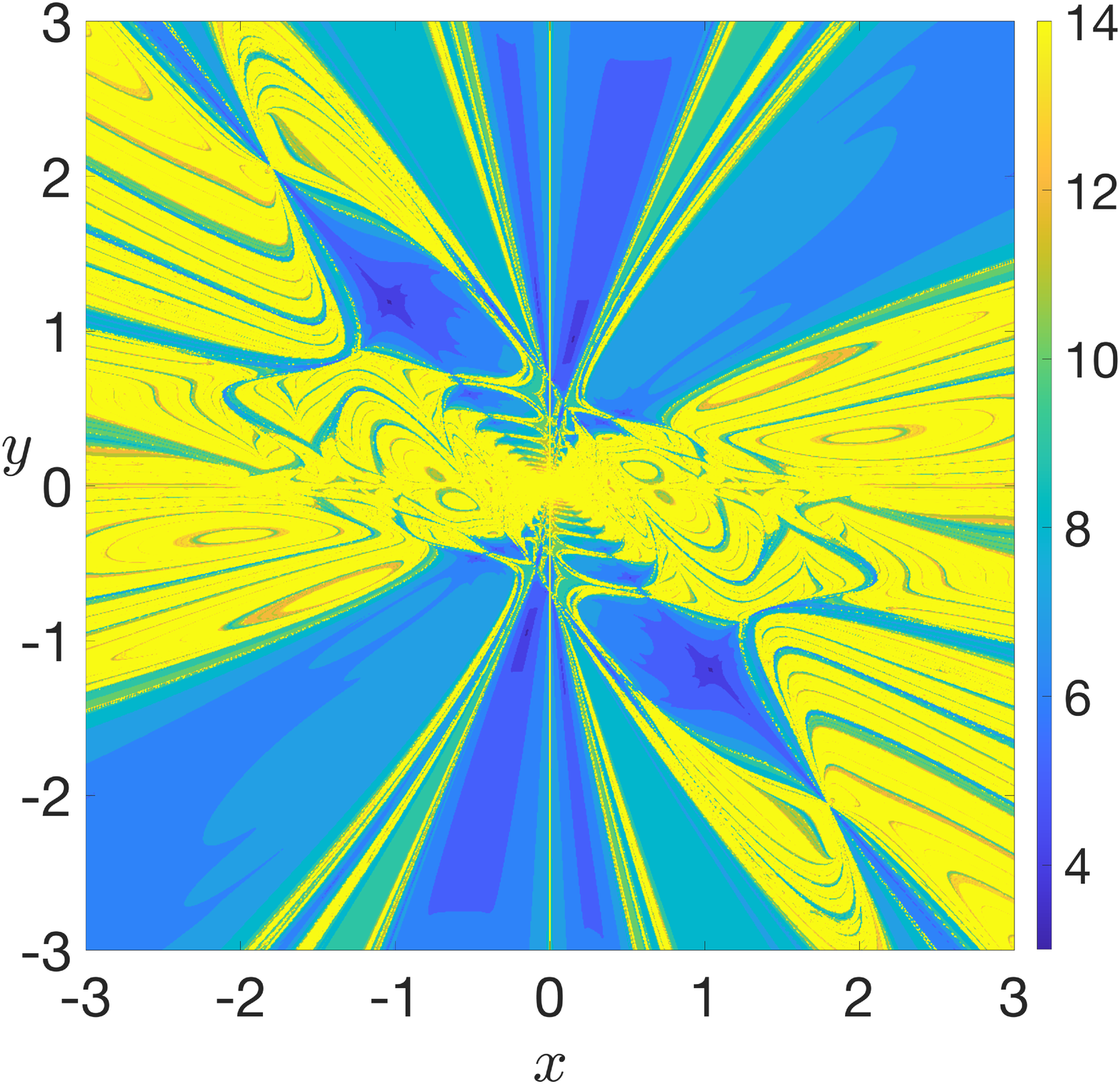}
\end{subfigure}
\begin{subfigure}{.36\textwidth}
\centering
\includegraphics[width=\textwidth]{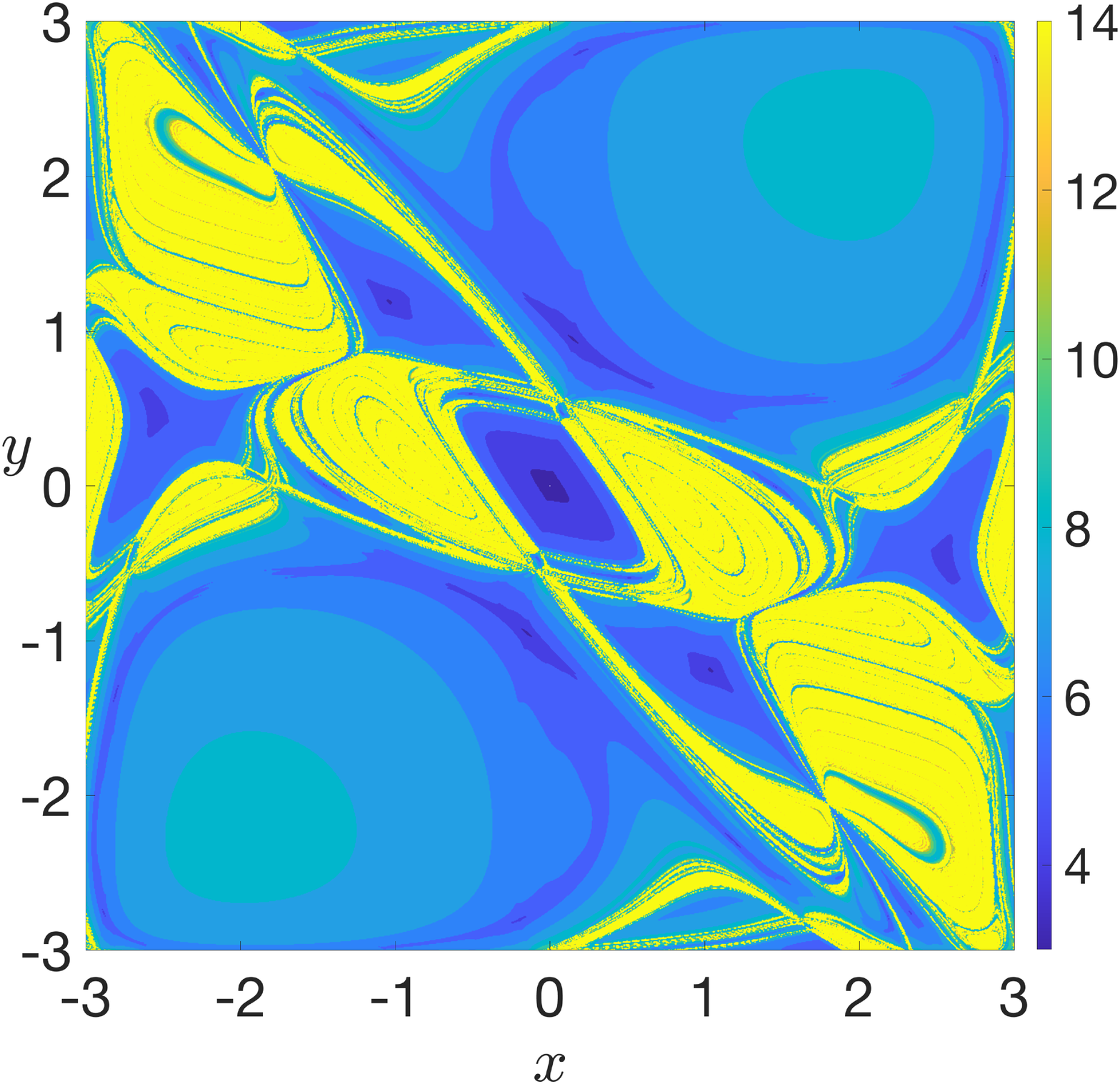}
\end{subfigure}
\begin{subfigure}{.36\textwidth}
\centering
\includegraphics[width=\textwidth]{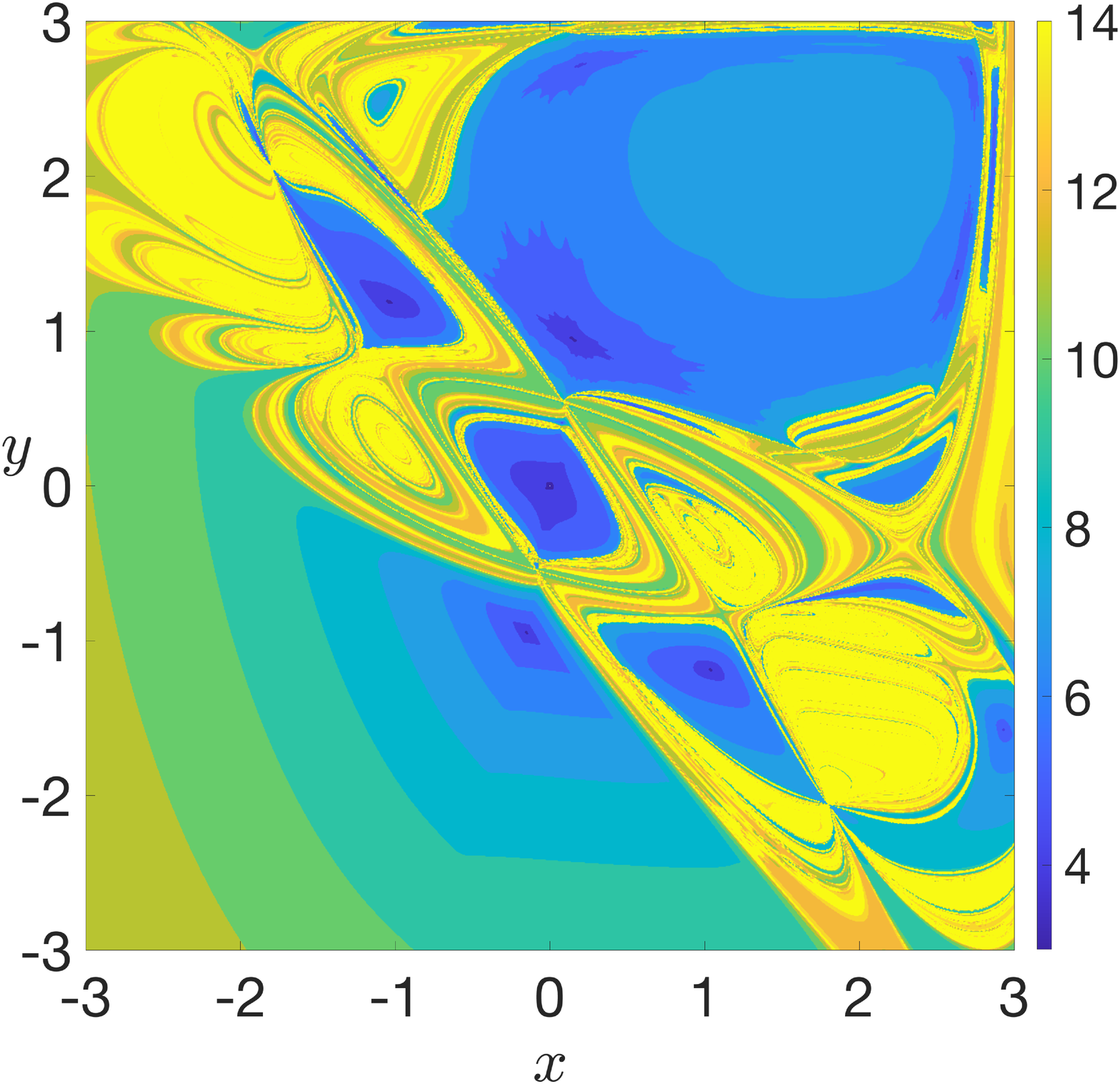}
\end{subfigure}
\begin{subfigure}{.36\textwidth}
\centering
\includegraphics[width=\textwidth]{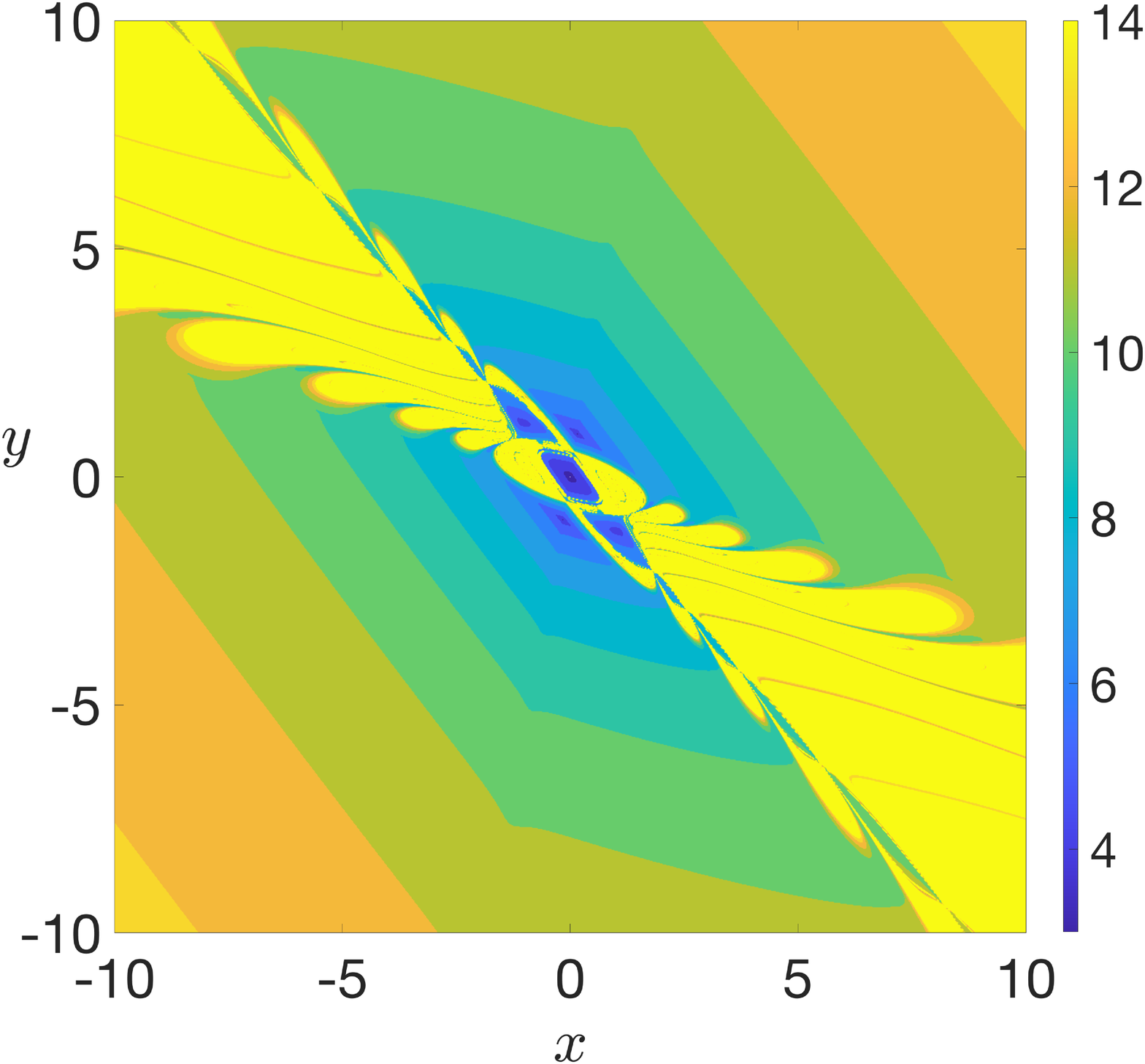}
\caption{$s(x) = (x_1, x_2)$}
\label{fig:sigproc_classic}
\end{subfigure}
\begin{subfigure}{.36\textwidth}
\centering
\includegraphics[width=\textwidth]{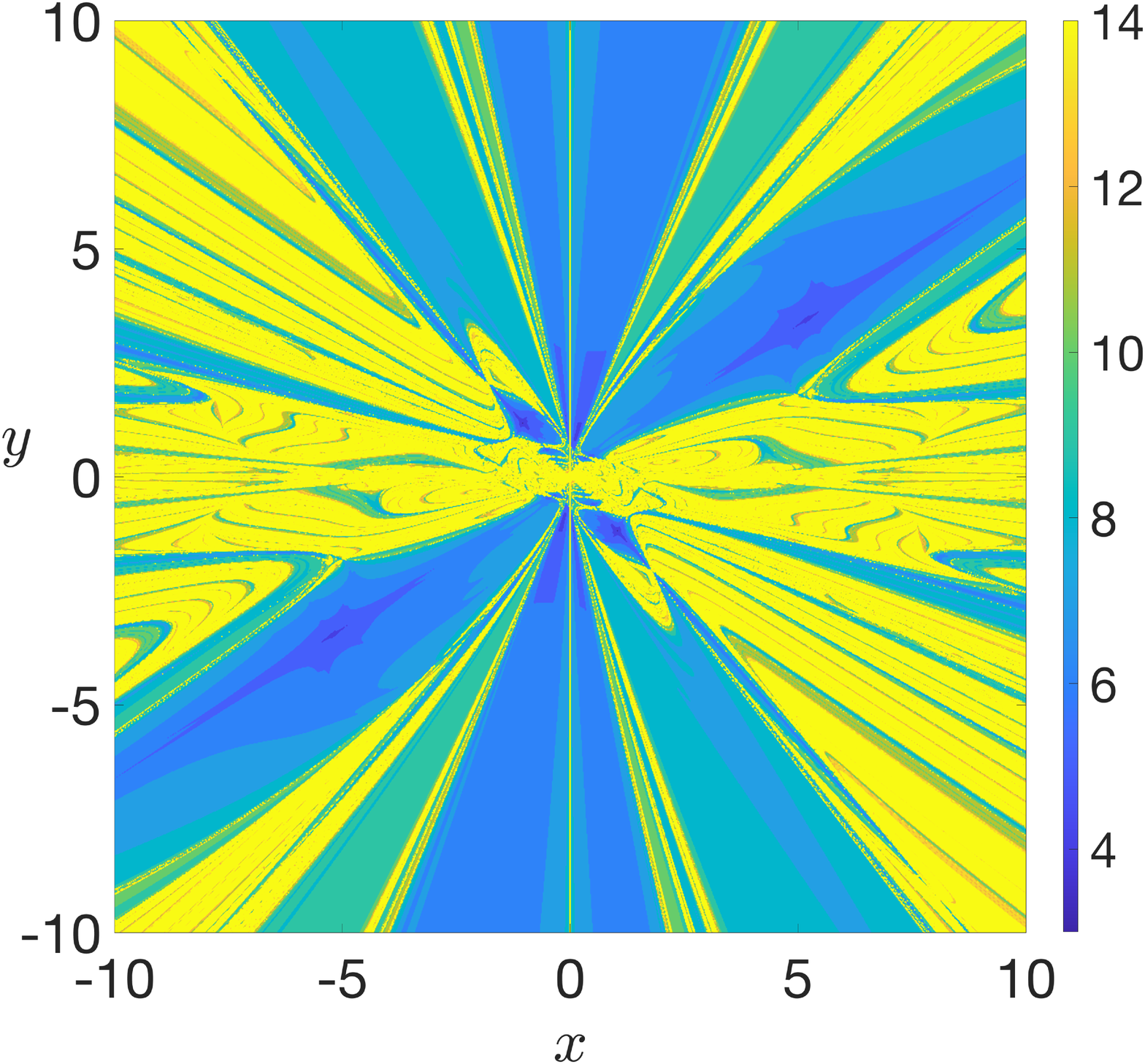}
\caption{$s(x) = (x_1^3, x_2^3)$}
\label{fig:sigproc_general1}
\end{subfigure}
\begin{subfigure}{.36\textwidth}
\centering
\includegraphics[width=\textwidth]{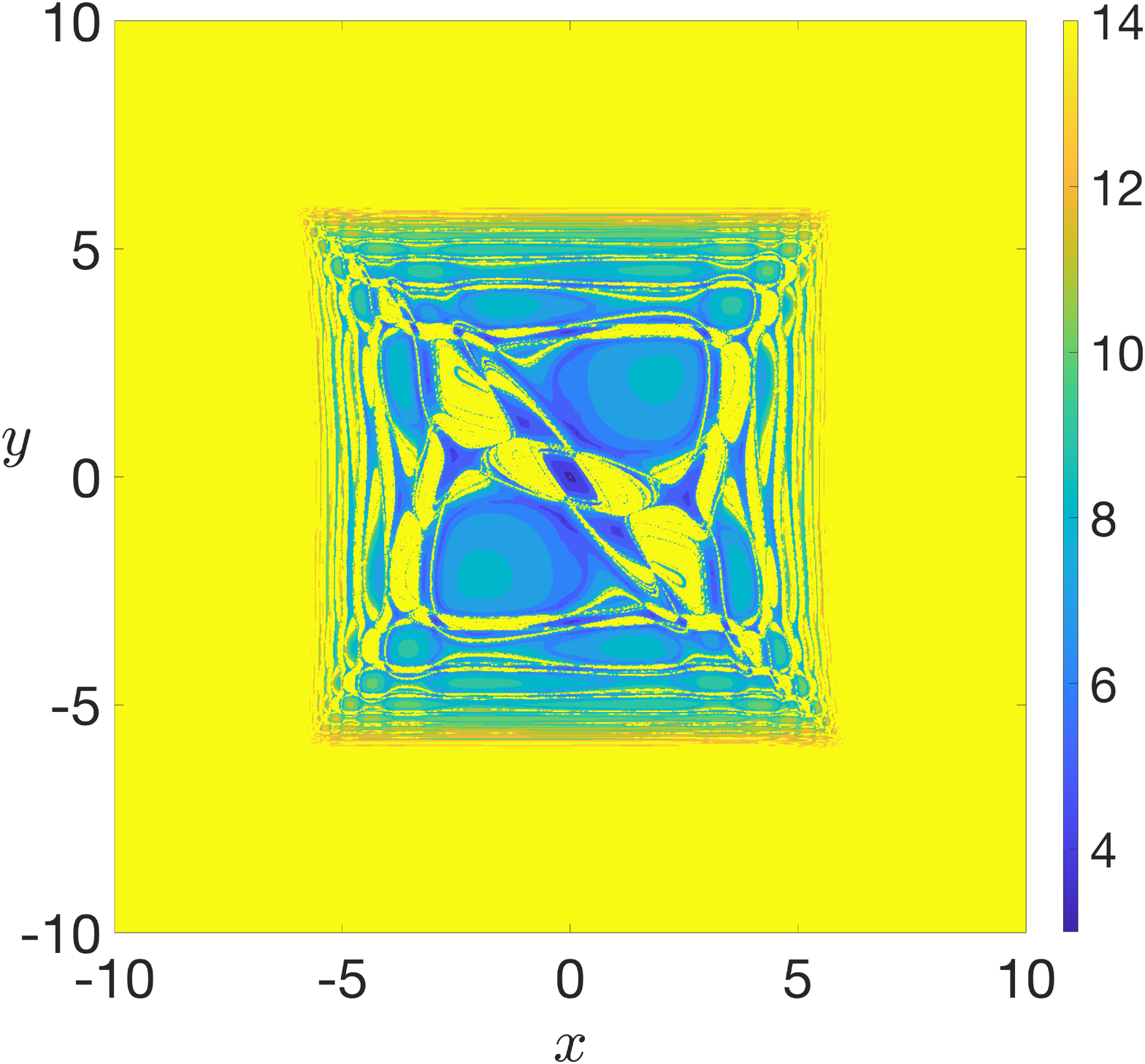}
\caption{$s(x) = (\sinh x_1, \sinh x_2)$}
\label{fig:sigproc_general2}
\end{subfigure}
\begin{subfigure}{.36\textwidth}
\centering
\includegraphics[width=\textwidth]{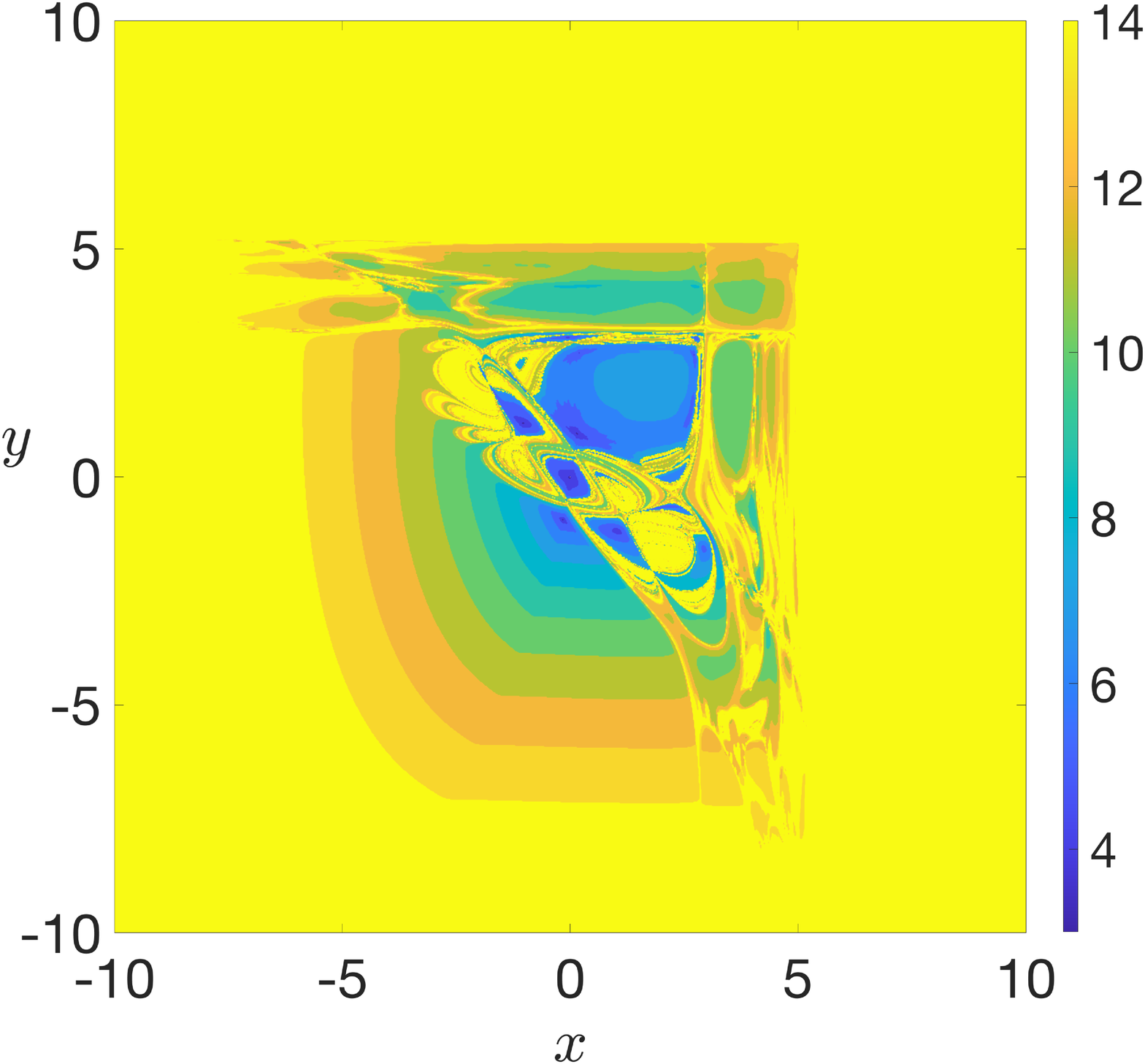}
\caption{$s(x) = (e^{x_1}, e^{x_2})$}
\label{fig:sigproc_general3}
\end{subfigure}
\caption{\small\sf System~\eqref{sys:sigprog}: Portraits of colour-coded number of iterations required for convergence.}
\label{fig:sigproc}
\end{figure}

\end{landscape}

As in the previous examples, we have run the classical and various generalized methods, with randomized initial points, to obtain the statistics in Tables~\ref{tbl2:sigproc}.  The only clear case for the choice of a method is when the search domain is $[-100,100]^2$, for which the cube-generalized method should certainly be the method of preference, given the relatively very high success rate, smaller number of iterations and not so much longer CPU time per iteration, all on the average.  To determine, ultimately, which of the methods will be preferable in the other search regions, we need to refer to Table~\ref{tbl3:sigproc}, as in the previous examples.

\begin{table}[H]
    \centering
{\small
\begin{tabular}{c|rr|rr|rr|c}
 & \multicolumn{2}{|c|}{$[-3,3]^2$} & \multicolumn{2}{c|}{$[-10,10]^2$} & \multicolumn{2}{c|}{$[-100,100]^2$} & CPU time/ \\[0.5mm]
\cline{2-3} \cline{4-5} \cline{6-7}
 & Ave & Success & Ave & Success & Ave & Success & successful \\
$s_i(x)$ & iter & rate [\%] & iter & rate [\%] & iter & rate [\%] & iter [sec] \\[1mm] \hline\\[-4mm]
$x_i$ & 7.8 & 80.1 & 10.5 & 81.1 & 12.2 & 4.2 & $3.7\times10^{-6}$ \\
$x_i^3$ & 7.8 & 68.6 & 8.1 & 69.7 & 8.7 & 67.3 & $6.2\times10^{-6}$ \\ 
$\sinh(x_i)$ & 6.9 & 78.5 & 8.4 & 25.0 & 8.3 & 0.2 & $3.9\times10^{-6}$ \\
$e^{x_i}$ & 8.6 & 81.4 & 10.9 & 27.6 & 10.9 & 0.3 & $5.1\times10^{-6}$ \\
$\tan{x_i}$ & 6.7 & 34.9 & 7.3 & 24.4 & 7.9 & 0.4 & $3.9\times10^{-6}$ \\[1mm] \hline
\end{tabular}
\caption{\small\sf System~\eqref{sys:sigprog}: Performance of the classical and generalized Newton methods with one million randomly generated starting points in domains of various sizes.}
\label{tbl2:sigproc}}
\end{table}

The boxed CPU times in Table~\ref{tbl3:sigproc} dictate that while the sinh-generalized method should be the method of choice in $[-3,3]^2$, the classical Newton method should better be used in $[-10,10]^2$.  We observe that the cube-generalized method is more than 700 times more efficient than its nearest contender, the classical method, in the large search domain $[-100,100]^2$.  This equivalently means that by the time the classical method finds a single solution,  the cube-generalized method will have obtained more than 13 solutions, on the average.

\begin{table}[H]
    \centering
{\small
\begin{tabular}{c|ccc}
 & \multicolumn{3}{c}{Time needed to get a single soln [sec]} \\[1mm]
\cline{2-4} \\[-4mm]
$s_i(x)$ & $[-3,3]^2$ & $[-10,10]^2$ & $[-100,100]^2$ \\[1mm] \hline\\[-4mm]
$x_i$ & $3.6\times10^{-5}$ & \framebox{$4.8\times10^{-5}$} & $1.1\times10^{-3}$ \\
$x_i^3$ & $7.0\times10^{-5}$ & $7.2\times10^{-5}$ & \framebox{$8.0\times10^{-5}$} \\ 
$\sinh(x_i)$ & \framebox{$3.4\times10^{-5}$} & $1.3\times10^{-4}$ & $1.6\times10^{-2}$ \\
$e^{x_i}$ & $5.4\times10^{-5}$ & $2.0\times10^{-4}$ & $1.9\times10^{-2}$ \\
$\tan{x_i}$ & $7.5\times10^{-5}$ & $1.2\times10^{-4}$ & $7.7\times10^{-3}$ \\[1mm] \hline
\end{tabular}
\caption{\small\sf System~\eqref{sys:sigprog}: CPU time needed on the average by the classical and generalized Newton methods to obtain a solution in less than 14 iterations, based on the data in Table~\ref{tbl2:sigproc}.}
\label{tbl3:sigproc}}
\end{table}

\section{Conclusion and Discussion}

We have proposed a family of generalized Newton methods facilitated by an auxiliary, or generalizing, function $s$, for solving systems of nonlinear equations.  The method reduces to the classical Newton method if the generalizing function is the identity map, i.e., $s(x) = x$.  Under mild assumptions, we have proved that the new family of methods are quadratically convergent just like the classical one.  We derived expressions for the bounds on the asymptotic error constants of the family.  These bounds, which can be computed for practical problems easily as illustrated in the numerical experiments, can provide an idea about the relative local speeds of the classical and generalized methods, although they are not tight.

For numerical experimentation, we have considered three types of problems, namely systems of equations involving quartic (in two variables), cubic (in two and six variables), and exponential (in two variables), functions.  We carried out extensive numerical experiments using $s(x) = x$ (the classical method), and $s(x) = x^3$, $\sinh x$, $e^x$, and $\tan x$ (the cube-, sinh-, exp- and tan-generalized methods, respectively), for each of the example problems.

For the problems in two variables, we constructed graphs depicting, for search domains of various sizes, a portrait of the colour-coded number of iterations a method would need to converge to a solution.  These graphs, or portraits, were observed to provide a broad idea as to which method is likely to be more preferable. Using one million randomly generated initial points in each chosen search domain, we presented tables reporting the success rate and average number of iterations of a method as well as the average CPU time one iteration of that method takes.  By using this data, we were able to identify the method with the smallest CPU time required for finding a single solution, as the preferred method in a particular search domain.

For the cubic and quartic problems we have considered, the sinh-generalized method seems to be particularly successful in relatively smaller search domains.  In slightly larger domains where the  sinh-generalized method is not so successful anymore, the classical Newton method looks like the method of preference for the two-variable problems.  For very large domains, the cube-generalized method certainly looks to be the method of choice, if not the only successful method for some problems.  We found that for the exponential problem we studied, the exp-generalized method seems to be the only method one should use in domains of any size.

The kind of numerical exploration we performed could be particularly useful in the case when a system of equations involves parameters and these parameters change only slightly so that the portraits and the statistical data are not altered much.  In other words, given a system of equations
\begin{equation}\label{sys_param}
    f(x,p)={\bf 0}\,,
\end{equation}
with a vector of $n$ unknowns, $x\in \dR^n$, and a fixed vector of $m$ parameters, $p\in\dR^m$, the task would be to find a solution of \eqref{sys_param} as efficiently as possible. Suppose that we have identified the preferred generalized method through the numerical exploration we have devised in this paper.  When $p$ has changed slightly, the preferred method might then be employed to find a new solution of \eqref{sys_param}.

We have demonstrated that a suitable choice of $s$ is possible for some specific forms of systems of equations. Making informed choices of $s$ for more general problems still stands as a challenging research problem.  Like most available modifications on Newton method, our generalized version may switch to the classical one (i.e., with $s(x)=x$) or to another generalized method when the current choice of $s$ is either inconvenient or provides no detectable advantage. One may refer to this version as a {\em hybrid} generalized Newton method.  For example, when solving the cubic problems, based on the results in the tables with CPU times to get a single solution, it might be interesting to devise a hybrid method which switches from the classical or the cube-generalized method to the sinh-generalized method as iterations fall into the search domain $[-3,3]^2$.

In the future, it would be valuable to study quadratic convergence regions as it was done in \cite{Kantorovich1948, Smale1986} by Kantorovich and Smale, for the new generalized methods.  It would also be interesting to consider extending the work presented in this paper to situations where global convergence to a solution is guaranteed, such as the Levenberg--Marquardt approach~\cite{Levenberg, Marquardt}.


\end{document}